\newtheorem{theorem}{Theorem}[section]
\newtheorem{prop}[theorem]{Proposition}
\newtheorem{coro}[theorem]{Corollary}
\newtheorem{lemma}[theorem]{Lemma}
\newtheorem{example}{Example}[section]
\newtheorem{defin}{Definition}[section]
\newtheorem{remark}{Remark}[section]
\newcommand{\floor}[1]{\big\lfloor #1 \big\rfloor}
\newcommand{\pg}{\operatorname{PG}}
\title{Pyramid Transform of Manifold Data\\ via Subdivision Operators}
\author{Wael Mattar and Nir Sharon}
\date{}
\begin{document}
	\maketitle 
	
	\noindent\makebox[\linewidth]{\rule{\textwidth}{1pt}}
	
	\textbf{Abstract.} Multiscale transforms have become a key ingredient in many data processing tasks. With technological development, we observe a growing demand for methods to cope with non-linear data structures such as manifold values. In this paper, we propose a multiscale approach for analyzing manifold-valued data using a pyramid transform. The transform uses a unique class of downsampling operators that enable a non-interpolating subdivision schemes as upsampling operators. We describe this construction in detail and present its analytical properties, including stability and coefficient decay. Next, we numerically demonstrate the results and show the application of our method to denoising and anomaly detection.
	
	\noindent\makebox[\linewidth]{\rule{\textwidth}{1pt}}
	
	
	\section{Introduction}
	
	Many modern applications use manifold values as a primary tool to model data, e.g.,~\cite{frank2016continuous, lunga2014manifold, rahman2005multiscale}. Manifolds express a global nonlinear structure with constrained, high-dimensional elements. The employment of manifolds as data models raises the demand for computational methods to address fundamental tasks like integration, interpolation, and regression, which become challenging under the manifold setting, see, e.g.,~\cite{barbaresco2020lie, blanes2017concise, iserles2000lie, zeilmann2020geometric}. We focus on constructing a multiscale representation for manifold values using a fast pyramid transform. 
	
	Multiscale transforms are standard tools in signal and image processing that enable a hierarchical analysis of an object mathematically. Customarily, the first scale in the transform corresponds to a coarse representation, and as scales increase, so do the levels of approximation~\cite{mallat1999wavelet}. The pyramid transform uses a refinement or upsampling operator together with a corresponding subsampling operator for the construction of a fast multiscale representation of signals~\cite{donoho1992interpolating, wallner2020geometric}. The simplicity of this powerful method opened the door for many applications. Naturally, recent years found generalizations of multiscale representations for manifold values as well as manifold-valued pyramid transforms~\cite{storath2020wavelet}. Contrary to the classical, linear settings, where upsampling operators are often linear and global, e.g., polynomial interpolation, refinement operators to manifolds values are mostly nonlinear and local operators. One such class of operators arises in subdivision schemes.
	
	Subdivision schemes are powerful yet computationally efficient tools for producing smooth objects from discrete sets of points. These schemes are defined by repeatedly applying a subdivision operator that refines discrete sets. The subdivision refinements, which serve as upsampling operators, give rise to a natural connection between multiscale representations and subdivision schemes~\cite[Chapter 6]{daubechies1992ten}. In recent years, subdivision operators were adapted to manifold data and nonlinear geometries by various methods, and so have been their induced multiscale transforms, see~\cite{wallner2020geometric} for an overview.
	
	Multiscale transforms, based upon subdivision operators, commonly use interpolating subdivision schemes, i.e., operations that preserve the coarse objects through refinements. This standard facilitates the calculation of the missing detail coefficients at all scales. Particularly, coefficients associated with the interpolating values are systematically zeroed and do not have to be saved or processed in the following analysis levels. Therefore, this property of interpolating multiscale transforms makes the crux of many state-of-the-art algorithms, including data compression, see, e.g.,~\cite{lv2018novel}. On the other hand, the notion of non-interpolating pyramid transforms did not receive equal attention despite the popularly used non-interpolating subdivision schemes. A classic example of a widespread family of non-interpolating subdivision operators is the well-known B-spline, see~\cite{de1978practical, lane1980theoretical}.
	
	The main challenge behind constructing a non-interpolating pyramid transform revolves around the question of calculating the multiscale details. In particular, given a non-interpolating subdivision scheme, the corresponding subsampling operators involve applying infinitely-supported real-valued sequences. Therefore, care must be taken when realizing and implementing these operators. In this paper, we introduce a novel family of pyramid transforms suitable for non-interpolating subdivision schemes. Our multiscale transforms decompose manifold-valued sequences in a similar pyramidical fashion to the interpolating ones. Specifically, the non-interpolating transforms' construction relies on the recently-introduced decimation operators, see~\cite{dynlinear}, which are employed as subsampling operators. From the interpolating point of view, the decimation operation coincides with the simple downsampling operation, taking all even-indexed elements.
	
	Our contribution in this paper also covers the computability of the linear decimation operators and the linear operators' adaptation to cope with manifold data. In particular, it is not possible to implement decimation operators since they involve infinitely supported sequences. Therefore, we approximate these operators with affine averages of finite elements, which successfully lead to the desired mathematical results. We also derive an analytic condition for decimation operators termed ``decimation-safety,'' and show that all our operators over manifold data satisfy the condition. We prove that multiscale transforms associated with decimation-safe operators, together with their corresponding inverses, enjoy coefficient decay and stability properties. The outcomes are an essential feature of the multiscale transform of manifold data and can significantly contribute to various applications and scientific questions.
	
	We conclude the paper with several numerical demonstrations of both the theoretical results we obtained and applications of data processing. Specifically, we provide examples of the use of our method for denoising and anomaly detection on synthetically generated manifold data. All figures and examples were generated using a code package that complements the paper and is available online for reproducibility.
	
	
	\section{Preliminaries}
	
	\subsection{Linear univariate subdivision schemes}
	
	In the functional setting, a linear binary subdivision scheme $\mathcal{S}$ operates on a real-valued bi-infinite sequence $\boldsymbol{c}=\left\{ c_k\in\mathbb{R}\mid k\in\mathbb{Z}\right\}$. Applying the subdivision scheme on $\boldsymbol{c}$ yields a sequence $\mathcal{S}(\boldsymbol{c})$ which is associated with the values over the refined grid $2^{-1}\mathbb{Z}$. This process is repeated infinitely and results in values defined on the dyadic rationals, which form a dense set over the real line. If the generated values of the repeated process for any sequence $\boldsymbol{c}$ converge uniformly at the dyadic points to the values of a continuous function, we term the subdivision scheme convergent and treat the function as its limit, see, e.g.,~\cite{dyn2002analysis}. We denote a linear, binary refinement rule of a univariate subdivision scheme $\mathcal{S}$ with a finitely supported mask $\boldsymbol{\alpha}$ by
	\begin{align}~\label{LinearSubdivisionScheme}
		\mathcal{S}_{\boldsymbol{\alpha}}(\boldsymbol{c})_k = \sum_{i\in\mathbb{Z}}\alpha_{k-2i}c_i, \quad k\in\mathbb{Z}.
	\end{align}
	Depending on the parity of the index $k$, the refinement rule~\eqref{LinearSubdivisionScheme} can be split into two rules. Namely,
	\begin{align}~\label{LinearCentersOfMass}
		\mathcal{S}_{\boldsymbol{\alpha}}(\boldsymbol{c})_{2k}=\sum_{i\in\mathbb{Z}}\alpha_{2i}c_{k-i}\quad \text{and}\quad \mathcal{S}_{\boldsymbol{\alpha}}(\boldsymbol{c})_{2k+1}=\sum_{i\in\mathbb{Z}}\alpha_{2i+1}c_{k-i}, \quad k\in\mathbb{Z}.
	\end{align}
	For more details, we encourage the reader to see~\cite{dyn2002interpolatory}. Moreover, the scheme~\eqref{LinearSubdivisionScheme} can be written as the convolution $\mathcal{S}_{\boldsymbol{\alpha}}(\boldsymbol{c})=\boldsymbol{\alpha}\ast(\boldsymbol{c}\uparrow 2)$ where
	\begin{align*}
		\left(\boldsymbol{c}\uparrow 2\right)_k=\begin{cases}
			c_{k/2}, & k \text{ is even}, \\
			0, & \text{otherwise},
		\end{cases} \quad
		k\in\mathbb{Z},
	\end{align*}
	is the \emph{upsampled sequence} $\boldsymbol{c}$.
	
	A subdivision scheme is termed interpolating if $\mathcal{S}_{\boldsymbol{\alpha}}(\boldsymbol{c})_{2k}=c_{k}$ for all $k\in\mathbb{Z}$. Equivalently, $\mathcal{S}_{\boldsymbol{\alpha}}(\boldsymbol{c})=\boldsymbol{c}\uparrow 2$ over the even indices. A necessary condition for the convergence of a subdivision scheme with the refinement rule~\eqref{LinearSubdivisionScheme}, see e.g.,~\cite{dyn1992subdivision}, is
	\begin{align}~\label{AlphaInvariance}
		\sum_{i\in\mathbb{Z}}\alpha_{2i}=\sum_{i\in\mathbb{Z}}\alpha_{2i+1}=1.
	\end{align}
	Henceforth, we assume that any subdivision operator mentioned is of convergent subdivision schemes. Moreover, we refer to the masks which satisfy~\eqref{AlphaInvariance} as \emph{shift invariant}. The reason being is that applying a subdivision scheme with shift invariant mask on a shifted data points results with precisely the shifted original outcome. Note that an invariant rule~\eqref{LinearCentersOfMass} is a weighted average which can be interpreted as the \emph{center of mass} of elements $c_{k-i}$, with the components of $\boldsymbol{\alpha}$ as their weights. This interpretation is fundamental for the adaptation of linear subdivision schemes to manifold data, as we present next.
	
	\subsection{The Riemannian analogue of a linear subdivision scheme}\label{RiemannianAnalogue}
	
	Subdivision schemes with shift invariant masks were adapted to manifold-valued data via different methods and approaches, see e.g.,~\cite{dyn2017manifold, grohs2010general, rahman2005multiscale, wallner2005convergence}. One natural extension of a linear subdivision scheme~\eqref{LinearSubdivisionScheme} to manifold-valued data can be done with the help of the Riemannian structure. Let $\mathcal{M}$ be a Riemannian manifold equipped with Riemannian metric which we denote by $\langle\cdot,\cdot\rangle$. The Riemannian geodesic distance $\rho(\cdot,\cdot)\colon\mathcal{M}^2\to\mathbb{R}^+$ is
	\begin{align}~\label{RiemannianDistance}
		\rho(x,y)=\inf_{\Gamma}\int_{a}^{b}|\dot{\Gamma}(t)|dt,
	\end{align}
	where $\Gamma\colon[a,b]\to\mathcal{M}$ is a curve connecting points $\Gamma(a)=x$ and $\Gamma(b)=y$, and $|\cdot|^2 = \langle \cdot, \cdot \rangle$.
	
	The linear subdivision scheme~\eqref{LinearSubdivisionScheme} associated with an invariant mask~\eqref{AlphaInvariance} can be characterized as the unique solution of the optimization problem
	\begin{align}~\label{LinearFrechetMean}
		\mathcal{S}_{\boldsymbol{\alpha}}(\boldsymbol{c})_{k} = \text{arg}\min_{x\in\mathbb{R}}\sum_{i\in\mathbb{Z}}\alpha_{k-2i}\|x-c_{i}\|^2, \quad k\in\mathbb{Z},
	\end{align}
	where $\|\cdot\|$ denotes the standard Euclidean norm. This is an alternative formulation for~\eqref{LinearSubdivisionScheme} as the Euclidean center of mass.
	
	For an $\mathcal{M}$-valued sequence $\boldsymbol{c}$ we transfer the optimization problem~\eqref{LinearFrechetMean} to $\mathcal{M}$ by replacing the Euclidean distance with the Riemannian geodesic distance~\eqref{RiemannianDistance}. We denote by $\mathcal{T}_{\boldsymbol{\alpha}}$ the Riemannian analogue of the linear subdivision scheme $\mathcal{S}_{\boldsymbol{\alpha}}$; given a mask $\boldsymbol{\alpha}$, we define the adapted subdivision scheme as
	\begin{align}~\label{ManifoldFrechetMean}
		\mathcal{T}_{\boldsymbol{\alpha}}(\boldsymbol{c})_{k} = \text{arg}\min_{x\in\mathcal{M}}\sum_{i\in\mathbb{Z}}\alpha_{k-2i}\rho(x,c_i)^2, \quad k\in\mathbb{Z}.
	\end{align}
	When the solution of~\eqref{ManifoldFrechetMean} exists uniquely, we term the solution as the \emph{Riemannian center of mass}~\cite{grove1973conjugatec}. It is also termed Karcher mean for matrices and Fr\`{e}chet mean in more general metric spaces, see~\cite{Karcher2014riemannian}.
	
	The global well-definedness of~\eqref{ManifoldFrechetMean} when $\alpha_k\geq 0$ is studied in~\cite{kobayashi1963foundations}. Moreover, in the framework where $\mathcal{M}$ has a non-positive sectional curvature, if the mask $\boldsymbol{\alpha}$ is shift invariant, then a globally unique solution for problem~\eqref{ManifoldFrechetMean} can be found, see e.g.,~\cite{hardering2015intrinsic,karcher1977riemannian,sander2016geodesic}. Recent studies of manifolds with positive sectional curvature show necessary conditions for uniqueness on the spread of points with respect to the injectivity radius of $\mathcal{M}$~\cite{dyer2016barycentric, SvenjaWallnerConvergenceSphere}. We focus our attention on $\mathcal{M}$-valued sequences $\boldsymbol{c}$ that are admissible in the sense that $\mathcal{T}_{\boldsymbol{\alpha}}(\boldsymbol{c})$ is uniquely defined for any shift invariant mask $\boldsymbol{\alpha}$, i.e., problems~\eqref{ManifoldFrechetMean} have unique solutions.
	
	We interpret many alternative methods for adapting subdivision operators to manifolds as finite approximations for the Riemmanian center of mass~\eqref{ManifoldFrechetMean}. This includes, for example, the exp-log methods~\cite{grohs2012definability, rahman2005multiscale}, repeated binary averaging~\cite{dyn2017global, wallner2005convergence}, and inductive means~\cite{dyn2017manifold}.
	
	\subsection{Interpolating linear multiscale transform}
	
	The notion of pyramid transforms is to represent a high-resolution sequence of data points as a pyramid consisting of a coarse approximation in addition to the multiscale layers, each corresponding to a different scale, see, e.g.,~\cite{grohs2010stability, dynlinear}. In this section, we briefly review an interpolating multiscale transform, see, e.g.,~\cite{donoho1992interpolating,harten1996multiresolution}.
	
	With the help of an interpolating subdivision scheme $\mathcal{S}_{\boldsymbol{\alpha}}$, a high resolution real-valued sequence $\boldsymbol{c}^{(1)}$ associated with the values over the grid $2^{-1}\mathbb{Z}$ can be decomposed into a coarse (low resolution) sequence $\boldsymbol{c}^{(0)}$ over the integers together with a sequence of detail coefficients $\boldsymbol{d}^{(1)}$ over the grid $2^{-1}\mathbb{Z}$ by letting 
	\begin{align}~\label{LinearInterpolatoryDecomposition}
		\boldsymbol{c}^{(0)}=\boldsymbol{c}^{(1)}\downarrow 2 \quad \text{and} \quad \boldsymbol{d}^{(1)}=\boldsymbol{c}^{(1)}-\mathcal{S}_{\boldsymbol{\alpha}} \boldsymbol{c}^{(0)},
	\end{align}
	where $\downarrow 2$ is the \emph{downsampling} operator given by $\left(\boldsymbol{c}\downarrow 2\right)_{k}=c_{2k}$ for all $k\in\mathbb{Z}$. In the same manner of decomposition~\eqref{LinearInterpolatoryDecomposition}, given a real-valued sequence $\boldsymbol{c}^{(J)}$, $J\in\mathbb{N}$ associated with the values over the fine grid $2^{-J}\mathbb{Z}$, it can be recursively decomposed by
	\begin{align}~\label{LinearInterpolatoryMultiscale}
		\boldsymbol{c}^{(\ell-1)}=\boldsymbol{c}^{(\ell)}\downarrow 2, \quad \boldsymbol{d}^{(\ell)}=\boldsymbol{c}^{(\ell)}-\mathcal{S}_{\boldsymbol{\alpha}} \boldsymbol{c}^{(\ell-1)}, \quad \ell=1,2,\dots,J.
	\end{align}
	
	The process~\eqref{LinearInterpolatoryMultiscale} yields a pyramid of sequences $\left\{\boldsymbol{c}^{(0)};\boldsymbol{d}^{(1)},\dots,\boldsymbol{d}^{(J)}\right\}$ where $\boldsymbol{c}^{(0)}$ is the coarse approximation coefficients given over the integers, and $\boldsymbol{d}^{(\ell)}$, $\ell=1,2,\dots,J$ are the detail coefficients at level $\ell$, given over the values of the grids $2^{-\ell}\mathbb{Z}$. We obtain synthesis by the following iterations,
	\begin{align}~\label{LinearInterpolatoryReconstruction}
		\boldsymbol{c}^{(\ell)}=\mathcal{S}_{\boldsymbol{\alpha}} \boldsymbol{c}^{(\ell-1)}+\boldsymbol{d}^{(\ell)}, \quad \ell=1,2,\dots,J,
	\end{align}
	which is the inverse transform of~\eqref{LinearInterpolatoryMultiscale}. At index $k\in\mathbb{Z}$, the detail coefficient $d^{(\ell)}_k$ measures the agreement between $c^{(\ell)}_k$ and $(\mathcal{S}_{\boldsymbol{\alpha}} \boldsymbol{c}^{(\ell-1)})_k$. In particular, since $\mathcal{S}_{\boldsymbol{\alpha}}$ is interpolating, we have that $d^{(\ell)}_{2k}=0$ for all $k\in\mathbb{Z}$, that is, 
	\begin{align}~\label{InterpolatingCondition}
		\big[(\mathcal{I}-\mathcal{S}_{\boldsymbol{\alpha}} \downarrow 2)\boldsymbol{c}^{(\ell)}\big]\downarrow 2 = \boldsymbol{0}, \quad \ell=1,2,\dots,J,
	\end{align}
	where $\mathcal{I}$ is the identity operator in the functional setting. Therefore, property~\eqref{InterpolatingCondition} allows us to omit ``half'' of the detail coefficients of each layer as we represent real-valued sequences -- a natural benefit for data compression. The diagrams of Figure~\ref{LinearInterpolatingDiagram} demonstrate the interpolating multiscale transforms like~\eqref{LinearInterpolatoryMultiscale} and its inverse.
	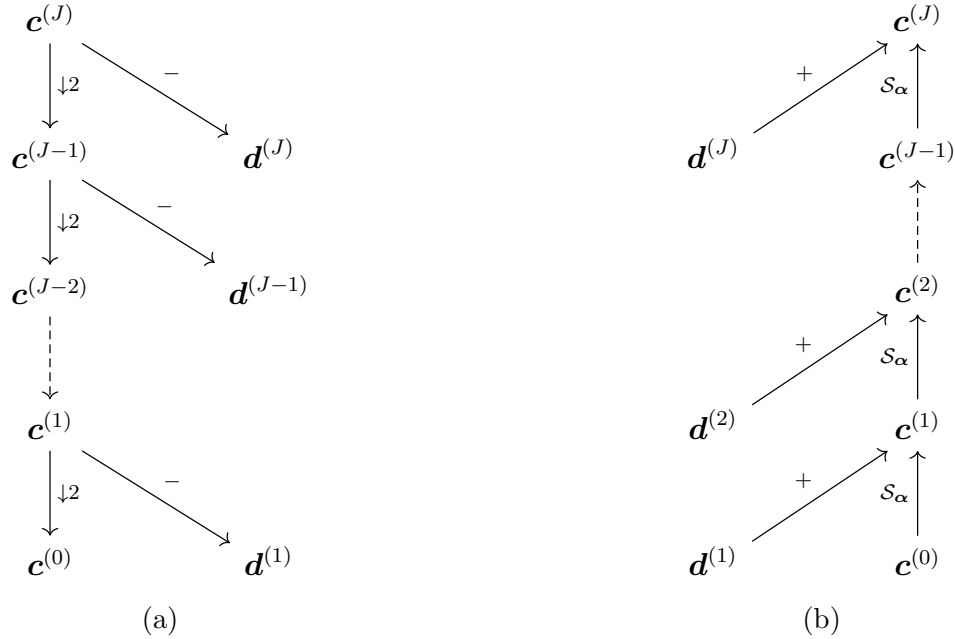
\begin{figure}[H]
		\centering
		\begin{subfigure}{.5\textwidth}
			\centering
			\begin{tikzcd}[sep=large]
				\boldsymbol{c}^{(J)} \arrow[d, "\downarrow 2"] \arrow[dr, "-"]
				\\
				\boldsymbol{c}^{(J-1)} \arrow[d, "\downarrow 2"] \arrow[dr, "-"] & \boldsymbol{d}^{(J)} \\
				\boldsymbol{c}^{(J-2)} \arrow[d, dashed] & \boldsymbol{d}^{(J-1)} \\
				\boldsymbol{c}^{(1)} \arrow[d, "\downarrow 2"] \arrow[dr,"-"]
				\\
				\boldsymbol{c}^{(0)} & \boldsymbol{d}^{(1)}
			\end{tikzcd}
			\caption{}
			\label{Analysis}
		\end{subfigure}%
		\begin{subfigure}{.5\textwidth}
			\centering
			\begin{tikzcd}[sep=large]
				& \boldsymbol{c}^{(J)} \\
				\boldsymbol{d}^{(J)} \arrow[ur, "+"] & \boldsymbol{c}^{(J-1)} \arrow[u, "\mathcal{S}_{\boldsymbol{\alpha}}"] \\
				& \boldsymbol{c}^{(2)} \arrow[u, dashed] \\
				\boldsymbol{d}^{(2)} \arrow[ur, "+"] & \boldsymbol{c}^{(1)} \arrow[u, "\mathcal{S}_{\boldsymbol{\alpha}}"] \\
				\boldsymbol{d}^{(1)} \arrow[ur, "+"] & \boldsymbol{c}^{(0)} \arrow[u, "\mathcal{S}_{\boldsymbol{\alpha}}"]
			\end{tikzcd}
			\caption{}
			\label{Synthesis}
		\end{subfigure}
		\caption{The pyramid transform. On the left, the analysis~\eqref{LinearInterpolatoryMultiscale}, on the right, the synthesis~~\eqref{LinearInterpolatoryReconstruction}.}
		\label{LinearInterpolatingDiagram}
	\end{figure}
	
	In fact, the interpolating multiscale transform~\eqref{LinearInterpolatoryMultiscale} is a special case of the family of transforms presented in~\cite{donoho1992interpolating}. In particular, the operators $\mathcal{S}_{\boldsymbol{\alpha}}$ and $\downarrow 2$ play the roles of \emph{upscaling} and \emph{downscaling} filters, respectively.
	
	\subsection{Non-interpolating linear multiscale transform}
	
	The difficulty in using non-interpolating upscaling operators $\mathcal{S}_{\boldsymbol{\alpha}}$ in multiscale like~\eqref{LinearInterpolatoryMultiscale}, is that the sequence $\mathcal{S}_{\boldsymbol{\alpha}}(\boldsymbol{c})$ does not preserve the elements $\boldsymbol{c}$. In such case, the details must include more than just the difference between the original sequence $\boldsymbol{c}$ and refined downsampled sequence $\mathcal{S}_{\boldsymbol{\alpha}}(\boldsymbol{c}\downarrow 2)$. 
	
	The extension of multiscale transforms from interpolating subdivision operators to a wider class of subdivision operators involves \emph{even-reversible} operators. Each of these operators helps recover, after one iteration of refinement, data points associated with even indices. In other words, given a subdivision operator $\mathcal{S}_{\boldsymbol{\alpha}}$, we seek for an operator $\mathcal{D}$ such that
	\begin{align}~\label{NonInterpolatingCondition}
		\big[(\mathcal{I}-\mathcal{S}_{\boldsymbol{\alpha}}\mathcal{D})\boldsymbol{c}\big]\downarrow 2 = \boldsymbol{0}
	\end{align}
	holds for any real-valued sequence $\boldsymbol{c}$. Indeed, condition~\eqref{NonInterpolatingCondition} is the analogue of condition~\eqref{InterpolatingCondition}. Specifically, $\downarrow 2$ is replaced with the operator $\mathcal{D}$.
	
	Let $\boldsymbol{\gamma}$ be a sequence such that $\sum_{i\in\mathbb{Z}}\gamma_i=1$. Then, for any real-valued sequence $\boldsymbol{c}$ we define the \emph{decimation operator} $\mathcal{D}_{\boldsymbol{\gamma}}$ associated with the sequence $\boldsymbol{\gamma}$ to be
	\begin{align}~\label{LinearDecimationScheme}
		\mathcal{D}_{\boldsymbol{\gamma}} (\boldsymbol{c})_k=\sum_{i\in\mathbb{Z}}\gamma_{k-i}c_{2i}, \quad k\in\mathbb{Z}.
	\end{align}
	
	Indeed, decimation operators are downscaling operators in the sense that applying them to a sequence of data results in fewer data. Note that $\boldsymbol{\gamma}$ can have infinite support. Thus, calculating~\eqref{LinearDecimationScheme} usually involves truncation errors. The rule~\eqref{LinearDecimationScheme} can be expressed as the unique solution of an optimization problem as similar to~\eqref{LinearFrechetMean}, where $\alpha_{k-2i}$ and $c_i$ are replaced with $\gamma_{k-i}$ and $c_{2i}$, respectively. Moreover, it can also be expressed in terms of the convolutional equation, 
	\[\mathcal{D}_{\boldsymbol{\gamma}} (\boldsymbol{c}) = \boldsymbol{\gamma} \ast (\boldsymbol{c}\downarrow 2).\] 
	Note that in the interpolation case, $\mathcal{D}_{\boldsymbol{\delta}}$ agrees with $\downarrow 2$ where $\boldsymbol{\delta}$ is the Kronecker delta sequence, $\delta_0=1$ and $ \delta_i=0$ for $i\neq 0$.
	
	The unique solution of~\eqref{NonInterpolatingCondition}, see~\cite{dynlinear}, is the decimation operator $\mathcal{D}_{\boldsymbol{\gamma}}$ where $\boldsymbol{\gamma}$ is found via the convolutional equation
	\begin{align}~\label{ConvolutionalEquation}
		\boldsymbol{\gamma} \ast (\boldsymbol{\alpha}\downarrow 2) = \boldsymbol{\delta}.
	\end{align}
	Using Wiener's Lemma~\cite{grochenig2010wiener}, if $\boldsymbol{\alpha}\downarrow 2$ is compactly supported, then such $\boldsymbol{\gamma}$ with infinite support exists. In this case, we say that $\mathcal{D}_{\boldsymbol{\gamma}}$ is the even-inverse of $\mathcal{S}_{\boldsymbol{\alpha}}$. Furthermore, $\boldsymbol{\gamma}$ decays geometrically, as shown in~\cite{strohmer2002four}. More precisely,
	\begin{align}~\label{GammaCoefficientsDecay}
		|\gamma_k|\leq C\lambda^{|k|}, \quad k\in\mathbb{Z},
	\end{align}
	for constants $C>0$ and $0<\lambda<1$. This bound on the decay rate is essential for the computation of the decimation operation $\mathcal{D}_{\boldsymbol{\gamma}}$, as we will see in the next section. We proceed with two examples of subdivision schemes which generate B-spline curves. First, we invoke the general formula of their compact masks. The mask $\boldsymbol{\alpha}^{[m]}$ of the B-spline subdivision operator of order $m\in\mathbb{N}$ is given by
	\[ \boldsymbol{\alpha}^{[m]}_{k- \lceil m/2 \rceil} = 2^{-m}{\binom{m+1}{k}}, \quad k=0,1,\dots,m+1. \]
	For more details see~\cite{dyn1992subdivision}.
	
	\begin{example}~\label{QuadraticExample}
		\emph{(The quadratic B-spline).} Consider the mask $\boldsymbol{\alpha}^{[2]}=[\alpha_{-1},\alpha_{0},\alpha_{1},\alpha_{2} ]=\frac{1}{4}[1,3,3,1]$, then the downsampled mask is $\boldsymbol{\alpha}\downarrow 2=\frac{1}{4}[3, 1]$ and the solution of the corresponding convolutional equation~\eqref{ConvolutionalEquation} is
		\begin{align*}
			\gamma_k =\begin{cases}
				\displaystyle\frac{4}{3}\bigg(-\frac{1}{3}\bigg)^k, & k=0,1,2,\dots, \\
				0, & \text{otherwise.}
			\end{cases}
		\end{align*}
		This subdivision scheme is also known as the \emph{corner-cutting} scheme.
	\end{example}
	\begin{example}~\label{CubicExample}
		\emph{(The cubic B-spline).} The next scheme generates cubic B-splines and its mask is given as $\boldsymbol{\alpha}^{[3]}=[\alpha_{-2},\alpha_{-1},\alpha_{0},\alpha_{1},\alpha_{2} ]=\frac{1}{8}[1,4,6,4,1]$, then the downsampled mask is $\boldsymbol{\alpha}\downarrow 2=\frac{1}{8}[1, 6, 1]$ and the solution of the corresponding convolutional equation~\eqref{ConvolutionalEquation} is
		\begin{align*}
			\big[\dots,\gamma_{-3},\gamma_{-2},\gamma_{-1},\gamma_{0},\gamma_{1},\gamma_{2},&\gamma_{3}\dots\big]=\\&
			\big[\dots,-0.0071,0.0416,-0.2426,1.4142,-0.2426,0.0416,-0.0071\dots\big].
		\end{align*}
	\end{example}
	Note that in Examples~\ref{QuadraticExample} and~\ref{CubicExample}, $\boldsymbol{\gamma}$ is infinitely and bi-infinitely supported, respectively.
	
	We are finally in a position to present the non-interpolating linear multiscale transform. Given a non-interpolating subdivision scheme $\mathcal{S}_{\boldsymbol{\alpha}}$ with its corresponding even-inverse decimation operator $\mathcal{D}_{\boldsymbol{\gamma}}$, and a sequence $\boldsymbol{c}^{(J)}$ associated with the values over the fine grid $2^{-J}\mathbb{Z}$, we consider the following non-interpolating multiscale transform,
	\begin{align}~\label{LinearMultiscaleTransform}
		\boldsymbol{c}^{(\ell-1)}=\mathcal{D}_{\boldsymbol{\gamma}} \boldsymbol{c}^{(\ell)}, \quad \boldsymbol{d}^{(\ell)}=\boldsymbol{c}^{(\ell)}-\mathcal{S}_{\boldsymbol{\alpha}} \boldsymbol{c}^{(\ell-1)}, \quad \ell = 1,2,\dots,J.
	\end{align}
	Iterating~\eqref{LinearMultiscaleTransform} yields a pyramid of data $\left\{\boldsymbol{c}^{(0)};\boldsymbol{d}^{(1)},\dots,\boldsymbol{d}^{(J)}\right\}$ as similar to the interpolating transform~\eqref{LinearInterpolatoryMultiscale}. We obtain synthesis again by~\eqref{LinearInterpolatoryReconstruction}.
	
	It turns out that the multiscale transform~\eqref{LinearMultiscaleTransform} enjoys two main properties, that are, decay of the detail coefficients and stability of the inverse transform. Here we invoke both results citing~\cite{dynlinear}, but first, we define the operator $\Delta$ on a sequence $\boldsymbol{c}$ to be $\Delta\boldsymbol{c} = \sup_{k \in \mathbb{Z}} |c_{k+1}-c_k|$, and call decimation sequences $\boldsymbol{\gamma}$ that sum to $1$ as shift invariant. Furthermore, we recall that the $\infty$-norm of a real-valued sequence $\boldsymbol{v}=\left\{ v_k\mid k\in\mathbb{Z}\right\}$ is defined by $\|\boldsymbol{v}\|_\infty = \sup_{k \in \mathbb{Z}}|v_k|$.
	\begin{theorem}~\label{LinearGammaDecayTHM}
		Let $\boldsymbol{c}^{(J)}$ be a real-valued sequence and denote by $\left\{\boldsymbol{c}^{(0)};\boldsymbol{d}^{(1)},\dots,\boldsymbol{d}^{(J)}\right\}$ its multiscale transform generated by~\eqref{LinearMultiscaleTransform}. The linear subdivision scheme is $\mathcal{S}_{\boldsymbol{\alpha}}$ and $\mathcal{D}_{\boldsymbol{\gamma}}$ is its corresponding decimation operator defined using a shift invariant sequence $\boldsymbol{\gamma}$. We further assume that $ K_{\boldsymbol{\gamma}}=2\sum_{i\in\mathbb{Z}}|\gamma_i||i|<\infty$.
		Then,
		\begin{align}~\label{GammaBound}
			\|\boldsymbol{d}^{(\ell)}\|_\infty \leq K_{\boldsymbol{\alpha},\boldsymbol{\gamma}}\Delta\boldsymbol{c}^{(\ell)}, \quad \ell=1,2,\dots, J,
		\end{align}
		with $K_{\boldsymbol{\alpha},\boldsymbol{\gamma}}=K_{\boldsymbol{\gamma}}\|\boldsymbol{\alpha}\|_1+K_{\boldsymbol{\alpha}}\|\boldsymbol{\gamma}\|_1$ where $K_{\boldsymbol{\alpha}} = \sum_{i\in\mathbb{Z}}|\alpha_i||i|$.
	\end{theorem}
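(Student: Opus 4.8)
The plan is to expand a single detail coefficient $d^{(\ell)}_k$ directly, use the shift invariance of both $\boldsymbol{\alpha}$ and $\boldsymbol{\gamma}$ to rewrite it as a doubly weighted difference of sample values, and then control everything by $\Delta\boldsymbol{c}^{(\ell)}$. Fix a level $\ell$ and abbreviate $\boldsymbol{c}:=\boldsymbol{c}^{(\ell)}$. Combining~\eqref{LinearMultiscaleTransform} with~\eqref{LinearSubdivisionScheme} and~\eqref{LinearDecimationScheme},
\[
d^{(\ell)}_k = c_k - \big(\mathcal{S}_{\boldsymbol{\alpha}}\mathcal{D}_{\boldsymbol{\gamma}}\boldsymbol{c}\big)_k = c_k - \sum_{i\in\mathbb{Z}}\alpha_{k-2i}\sum_{j\in\mathbb{Z}}\gamma_{i-j}c_{2j}.
\]
Shift invariance of $\boldsymbol{\alpha}$, that is~\eqref{AlphaInvariance}, gives $\sum_{i\in\mathbb{Z}}\alpha_{k-2i}=1$ for every $k$, since the indices $k-2i$ exhaust exactly one parity class, on which the mask sums to $1$; likewise $\sum_{j\in\mathbb{Z}}\gamma_{i-j}=1$ by shift invariance of $\boldsymbol{\gamma}$. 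Hence $c_k$ equals the same double average of itself, and subtracting term by term,
\[
d^{(\ell)}_k = \sum_{i\in\mathbb{Z}}\alpha_{k-2i}\sum_{j\in\mathbb{Z}}\gamma_{i-j}\,(c_k-c_{2j}).
\]

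The rearrangement above is legitimate because $\boldsymbol{\alpha}$ is finitely supported (so the outer sum is finite), $|c_k-c_{2j}|\le|k-2j|\,\Delta\boldsymbol{c}$ grows only linearly in $j$, and $\sum_{m\in\mathbb{Z}}|\gamma_m|(1+|m|)<\infty$; the latter follows from the hypothesis $K_{\boldsymbol{\gamma}}<\infty$ together with $\|\boldsymbol{\gamma}\|_1<\infty$, which holds by the geometric decay~\eqref{GammaCoefficientsDecay}. The only quantitative ingredients are the telescoping estimate $|c_k-c_{2j}|\le|k-2j|\,\Delta\boldsymbol{c}$ and the triangle inequality $|k-2j|\le|k-2i|+2|i-j|$. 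Feeding both into the displayed identity,
\[
|d^{(\ell)}_k| \le \Delta\boldsymbol{c}\sum_{i\in\mathbb{Z}}|\alpha_{k-2i}|\sum_{j\in\mathbb{Z}}|\gamma_{i-j}|\big(|k-2i|+2|i-j|\big) = \Delta\boldsymbol{c}\Big(\|\boldsymbol{\gamma}\|_1\sum_{i\in\mathbb{Z}}|\alpha_{k-2i}|\,|k-2i| + K_{\boldsymbol{\gamma}}\sum_{i\in\mathbb{Z}}|\alpha_{k-2i}|\Big),
\]
where we used $\sum_{j\in\mathbb{Z}}|\gamma_{i-j}|=\|\boldsymbol{\gamma}\|_1$ and $2\sum_{j\in\mathbb{Z}}|\gamma_{i-j}|\,|i-j|=K_{\boldsymbol{\gamma}}$.

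To finish, note that $k-2i$ ranges over a subset of $\mathbb{Z}$, so $\sum_{i\in\mathbb{Z}}|\alpha_{k-2i}|\le\|\boldsymbol{\alpha}\|_1$ and $\sum_{i\in\mathbb{Z}}|\alpha_{k-2i}|\,|k-2i|\le K_{\boldsymbol{\alpha}}$; substituting gives $|d^{(\ell)}_k|\le(K_{\boldsymbol{\gamma}}\|\boldsymbol{\alpha}\|_1+K_{\boldsymbol{\alpha}}\|\boldsymbol{\gamma}\|_1)\Delta\boldsymbol{c}=K_{\boldsymbol{\alpha},\boldsymbol{\gamma}}\Delta\boldsymbol{c}^{(\ell)}$, and taking the supremum over $k$ yields~\eqref{GammaBound}. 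We do not expect a real obstacle here: the argument is uniform in $\ell$, so no induction over the pyramid levels is required, and the only point demanding a little care is the absolute-convergence justification for the double series, which is routine under the standing decay hypotheses on $\boldsymbol{\gamma}$ and the finite support of $\boldsymbol{\alpha}$.
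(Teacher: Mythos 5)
Your proof is correct and follows essentially the same argument the paper uses for its analogous results: the paper cites Theorem~\ref{LinearGammaDecayTHM} from~\cite{dynlinear}, but its own proofs of Theorems~\ref{LinearTruncatedDecayTHM} and~\ref{LinearNormalizedDecayTHM} proceed by exactly your computation — insert the shift invariance of $\boldsymbol{\alpha}$ and $\boldsymbol{\gamma}$ to write $d^{(\ell)}_k$ as a double average of differences $c_k-c_{2n}$, then bound via $|k-2n|\le|k-2i|+2|i-n|$ to obtain $K_{\boldsymbol{\gamma}}\|\boldsymbol{\alpha}\|_1+K_{\boldsymbol{\alpha}}\|\boldsymbol{\gamma}\|_1$. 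Your added remark on absolute convergence of the bi-infinite sums (using $K_{\boldsymbol{\gamma}}<\infty$ and the geometric decay~\eqref{GammaCoefficientsDecay}) is an appropriate, routine justification for the non-truncated mask.
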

	
	\begin{theorem}~\label{LinearStabilityTHM}
		Let $\left\{ \boldsymbol{c}^{(0)};\boldsymbol{d}^{(1)},\dots, \boldsymbol{d}^{(J)} \right\}$ and $\left\{ \widetilde{\boldsymbol{c}}^{(0)};\widetilde{\boldsymbol{d}}^{(1)},\dots, \widetilde{\boldsymbol{d}}^{(J)} \right\}$ be two pyramids of sequences. Then, there exists $L\geq 0$ such that
		\begin{align*}
			\|\boldsymbol{c}^{(J)}-\widetilde{\boldsymbol{c}}^{(J)}\|_\infty\leq L\bigg(\|\boldsymbol{c}^{(0)}-\widetilde{\boldsymbol{c}}^{(0)}\|_\infty+\sum_{i=1}^J \|\boldsymbol{d}^{(i)}-\widetilde{\boldsymbol{d}}^{(i)}\|_\infty\bigg),
		\end{align*}
		where $\boldsymbol{c}^{(J)}$ and $\widetilde{\boldsymbol{c}}^{(J)}$ are reconstructed from their respective data pyramids via~\eqref{LinearInterpolatoryReconstruction}.
	\end{theorem}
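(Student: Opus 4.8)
The plan is to linearise the synthesis recursion~\eqref{LinearInterpolatoryReconstruction} and unroll it. Introduce the error sequences $\boldsymbol{e}^{(\ell)} = \boldsymbol{c}^{(\ell)} - \widetilde{\boldsymbol{c}}^{(\ell)}$ and $\boldsymbol{f}^{(\ell)} = \boldsymbol{d}^{(\ell)} - \widetilde{\boldsymbol{d}}^{(\ell)}$ for $\ell = 0,1,\dots,J$. Subtracting the two reconstruction iterations of~\eqref{LinearInterpolatoryReconstruction} and using that $\mathcal{S}_{\boldsymbol{\alpha}}$ is linear gives the first-order recursion $\boldsymbol{e}^{(\ell)} = \mathcal{S}_{\boldsymbol{\alpha}}\boldsymbol{e}^{(\ell-1)} + \boldsymbol{f}^{(\ell)}$ for $\ell = 1,\dots,J$. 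Iterating this from $\ell=0$ to $\ell=J$ produces the closed form
\begin{align*}
	\boldsymbol{e}^{(J)} = \mathcal{S}_{\boldsymbol{\alpha}}^{\,J}\boldsymbol{e}^{(0)} + \sum_{\ell=1}^{J}\mathcal{S}_{\boldsymbol{\alpha}}^{\,J-\ell}\boldsymbol{f}^{(\ell)},
\end{align*}
where $\mathcal{S}_{\boldsymbol{\alpha}}^{\,n}$ denotes the $n$-fold composition, with $\mathcal{S}_{\boldsymbol{\alpha}}^{\,0}=\mathcal{I}$.

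Next I would control the operator norm of $\mathcal{S}_{\boldsymbol{\alpha}}$ on $\ell_\infty(\mathbb{Z})$. From~\eqref{LinearSubdivisionScheme}, for any sequence $\boldsymbol{v}$ and any index $k$ one has $|\mathcal{S}_{\boldsymbol{\alpha}}(\boldsymbol{v})_k| \le \big(\sum_{i\in\mathbb{Z}}|\alpha_{k-2i}|\big)\,\|\boldsymbol{v}\|_\infty$, and since $\boldsymbol{\alpha}$ is finitely supported the quantity $M := \|\mathcal{S}_{\boldsymbol{\alpha}}\|_\infty \le \|\boldsymbol{\alpha}\|_1$ is finite (indeed $M=1$ whenever the shift invariant mask is nonnegative, by~\eqref{AlphaInvariance}). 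Submultiplicativity of the operator norm then yields $\|\mathcal{S}_{\boldsymbol{\alpha}}^{\,n}\|_\infty \le M^{n}$ for every $n\ge 0$.

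Finally, applying the triangle inequality to the closed form above together with these norm bounds gives
\begin{align*}
	\|\boldsymbol{c}^{(J)}-\widetilde{\boldsymbol{c}}^{(J)}\|_\infty \le M^{J}\,\|\boldsymbol{c}^{(0)}-\widetilde{\boldsymbol{c}}^{(0)}\|_\infty + \sum_{\ell=1}^{J}M^{J-\ell}\,\|\boldsymbol{d}^{(\ell)}-\widetilde{\boldsymbol{d}}^{(\ell)}\|_\infty,
\end{align*}
and since $M^{J-\ell}\le \max\{1,M\}^{J}$ for all $0\le \ell\le J$, the assertion follows with $L=\max\{1,M\}^{J}$. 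The argument is essentially routine; the only place needing a little care is the bound on $\|\mathcal{S}_{\boldsymbol{\alpha}}\|_\infty$, where one must note that $\sum_i|\alpha_{k-2i}|$ runs over a single parity class and is finite by compact support, so I do not anticipate a genuine obstacle. (If a constant $L$ independent of $J$ were desired, one would instead use that $\mathcal{S}_{\boldsymbol{\alpha}}$ reproduces constants and pass to the contractive difference scheme, but the statement as phrased does not require this.)
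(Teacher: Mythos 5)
Your proof is correct. Note that the paper itself does not prove Theorem~\ref{LinearStabilityTHM}; it is invoked from~\cite{dynlinear}, so there is no in-paper argument for this exact statement to compare against. Your route --- subtract the two synthesis recursions, use linearity of $\mathcal{S}_{\boldsymbol{\alpha}}$ to get $\boldsymbol{e}^{(\ell)}=\mathcal{S}_{\boldsymbol{\alpha}}\boldsymbol{e}^{(\ell-1)}+\boldsymbol{f}^{(\ell)}$, and unroll with the operator-norm bound $\|\mathcal{S}_{\boldsymbol{\alpha}}\|_\infty\leq\|\boldsymbol{\alpha}\|_1$ --- is precisely the linear specialization of how the paper proves its manifold analogue, Theorem~\ref{ManifoldStabilityTHM1}, where the triangle inequality and the stability constant $S_{\mathcal{T}}$ play the roles of linearity and the operator norm and the resulting constant is likewise $S_{\mathcal{T}}^{J}$, i.e.\ level-dependent. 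The only point worth flagging is that your $L=\max\{1,M\}^{J}$ grows with $J$ whenever $M>1$ (which can happen for convergent schemes whose masks have negative entries, though not for the B-spline masks used here); the statement as phrased tolerates a $J$-dependent constant, and the paper's manifold version accepts one too, but the sharper form of the linear result in~\cite{dynlinear} obtains $L$ independent of $J$ by exploiting convergence of $\mathcal{S}_{\boldsymbol{\alpha}}$ (uniform boundedness of $\|\mathcal{S}_{\boldsymbol{\alpha}}^{\,n}\|_\infty$, equivalently contractivity of the difference scheme) --- a refinement you correctly identify in your closing remark rather than an error.
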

	
	
	\section{Approximated linear decimation}\label{ApproximatedDecimationSection}
	
	The multiscale transform~\eqref{LinearMultiscaleTransform} involves applying $\boldsymbol{\gamma}$, which is infinite. In this section, we develop transforms with finitely supported coefficients, which are essential in practice. We derive the decay rates of the new multiscale schemes and compare them with the decay rate of the original multiscale transform.
	
	\subsection{Truncation of the decimation coefficients}
	
	We approximate the operation $\mathcal{D}_{\boldsymbol{\gamma}}$ as defined in~\eqref{LinearDecimationScheme} by a proper truncation of $\boldsymbol{\gamma}$. Given a \emph{truncation parameter} $\varepsilon >0$, we define
	\begin{align}~\label{Truncation}
		\widetilde{\gamma}_k(\varepsilon) = \begin{cases}
			\gamma_k, & |\gamma_{k}| > \varepsilon, \\
			0, & \text{elsewhere}.
		\end{cases}
	\end{align}
	The bound~\eqref{GammaCoefficientsDecay} implies that the support of $\boldsymbol{\widetilde{\gamma}}(\varepsilon)$, which we denote by $\Omega_\varepsilon$, is finite for any $\varepsilon$. For simplicity, since we assume $\varepsilon$ is fixed, we omit the parameter from the sequence $\widetilde{\boldsymbol{\gamma}}$.
	
	The next theorem provides an upper bound for detail coefficients that are generated by~\eqref{LinearMultiscaleTransform}, but with $\mathcal{D}_{\widetilde{\boldsymbol{\gamma}}}$ as its decimation operator. Namely, given a real-valued sequence $\boldsymbol{c}^{(J)}$, $J\in\mathbb{N}$, we consider 
	\begin{align}~\label{LinearTruncatedMultiscaleTransform}
		\boldsymbol{c}^{(\ell-1)}=\mathcal{D}_{\widetilde{\boldsymbol{\gamma}}}\boldsymbol{c}^{(\ell)}, \quad \boldsymbol{d}^{(\ell)}=\boldsymbol{c}^{(\ell)}-\mathcal{S}_{\boldsymbol{\alpha}} \boldsymbol{c}^{(\ell-1)}, \quad \ell = 1,2,\dots,J.
	\end{align}
	
	\begin{theorem}~\label{LinearTruncatedDecayTHM}
		Let $\boldsymbol{c}^{(J)}$ be a real-valued sequence and denote by $\left\{\boldsymbol{c}^{(0)};\boldsymbol{d}^{(1)},\dots,\boldsymbol{d}^{(J)}\right\}$ its multiscale transform generated by~\eqref{LinearTruncatedMultiscaleTransform}. The subdivision scheme $\mathcal{S}_{\boldsymbol{\alpha}}$ is non-interpolating and $\mathcal{D}_{\widetilde{\boldsymbol{\gamma}
		}}$ is its corresponding decimation operator~\eqref{LinearDecimationScheme} with the truncated mask $\widetilde{\boldsymbol{\gamma}}$ of~\eqref{Truncation} where $\boldsymbol{\gamma}$ solves~\eqref{ConvolutionalEquation}. Then, 
		\begin{align}~\label{TruncatedGammaBound}
			\|\boldsymbol{d}^{(\ell)}\|_\infty\leq K_{\boldsymbol{\alpha},\widetilde{\boldsymbol{\gamma}}} \Delta\boldsymbol{c}^{(\ell)} + \eta\|\boldsymbol{\alpha}\|_1 \|\boldsymbol{c}^{(\ell)}\|_\infty, \quad \ell=1,2,\dots,J,
		\end{align}
		where $K_{\boldsymbol{\alpha},\widetilde{\boldsymbol{\gamma}}} = K_{\widetilde{\boldsymbol{\gamma}}}\|\boldsymbol{\alpha}\|_1 + M K_{\boldsymbol{\alpha}}$ 
		with $ \eta = \sum_{i\notin \Omega_\varepsilon} |\gamma_i|$, $M=\sum_i |\widetilde{\gamma}_i|$ and $ K_{\widetilde{\boldsymbol{\gamma}}} = 2\sum_i|\widetilde\gamma_{i}||i|$.
	\end{theorem}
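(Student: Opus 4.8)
The plan is to mimic the proof of Theorem~\ref{LinearGammaDecayTHM}, but to carefully track the error introduced by replacing $\boldsymbol{\gamma}$ by its truncation $\widetilde{\boldsymbol{\gamma}}$ in the decimation step. Fix a level $\ell$ and abbreviate $\boldsymbol{c}=\boldsymbol{c}^{(\ell)}$, so that $\boldsymbol{c}^{(\ell-1)}=\mathcal{D}_{\widetilde{\boldsymbol{\gamma}}}\boldsymbol{c}$ and $\boldsymbol{d}^{(\ell)}=\boldsymbol{c}-\mathcal{S}_{\boldsymbol{\alpha}}\mathcal{D}_{\widetilde{\boldsymbol{\gamma}}}\boldsymbol{c}$. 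The first step is to split this as
\begin{align*}
\boldsymbol{d}^{(\ell)} = \big(\boldsymbol{c}-\mathcal{S}_{\boldsymbol{\alpha}}\mathcal{D}_{\boldsymbol{\gamma}}\boldsymbol{c}\big) + \mathcal{S}_{\boldsymbol{\alpha}}\big(\mathcal{D}_{\boldsymbol{\gamma}}-\mathcal{D}_{\widetilde{\boldsymbol{\gamma}}}\big)\boldsymbol{c}.
\end{align*}
The first summand is exactly the detail coefficient that Theorem~\ref{LinearGammaDecayTHM} (or the even-reversibility identity~\eqref{NonInterpolatingCondition} together with the argument behind~\eqref{GammaBound}) controls; it contributes a term of the form $\big(\text{something}\big)\Delta\boldsymbol{c}$. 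The second summand is the genuinely new piece and is where the $\eta\|\boldsymbol{\alpha}\|_1\|\boldsymbol{c}\|_\infty$ contribution comes from.

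For the second summand, note that $(\mathcal{D}_{\boldsymbol{\gamma}}-\mathcal{D}_{\widetilde{\boldsymbol{\gamma}}})\boldsymbol{c} = (\boldsymbol{\gamma}-\widetilde{\boldsymbol{\gamma}})\ast(\boldsymbol{c}\downarrow 2)$, and $\boldsymbol{\gamma}-\widetilde{\boldsymbol{\gamma}}$ is supported off $\Omega_\varepsilon$ with $\ell^1$-norm exactly $\eta=\sum_{i\notin\Omega_\varepsilon}|\gamma_i|$. Hence Young's inequality gives $\|(\mathcal{D}_{\boldsymbol{\gamma}}-\mathcal{D}_{\widetilde{\boldsymbol{\gamma}}})\boldsymbol{c}\|_\infty \leq \eta\,\|\boldsymbol{c}\|_\infty$, and applying $\mathcal{S}_{\boldsymbol{\alpha}}$, which is a convolution with $\boldsymbol{\alpha}$ after upsampling, multiplies this by at most $\|\boldsymbol{\alpha}\|_1$. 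That yields the additive term $\eta\|\boldsymbol{\alpha}\|_1\|\boldsymbol{c}^{(\ell)}\|_\infty$ in~\eqref{TruncatedGammaBound}.

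It then remains to produce the first term with the constant $K_{\boldsymbol{\alpha},\widetilde{\boldsymbol{\gamma}}}=K_{\widetilde{\boldsymbol{\gamma}}}\|\boldsymbol{\alpha}\|_1+MK_{\boldsymbol{\alpha}}$. The cleanest route is not to route everything through the exact $\boldsymbol{\gamma}$, but to estimate $\boldsymbol{c}-\mathcal{S}_{\boldsymbol{\alpha}}\mathcal{D}_{\widetilde{\boldsymbol{\gamma}}}\boldsymbol{c}$ directly at an arbitrary index $k$, using the shift-invariance normalizations $\sum_i\alpha_{2i}=\sum_i\alpha_{2i+1}=1$ and $\sum_i\gamma_i=1$ (hence $\sum_i\widetilde\gamma_i = 1-\eta$, which is where the deficiency must be absorbed — this is presumably why the $\eta\|\boldsymbol{\alpha}\|_1\|\boldsymbol{c}\|_\infty$ term reappears and why $M$, not $1$, multiplies $K_{\boldsymbol{\alpha}}$). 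Writing $c_k - (\mathcal{S}_{\boldsymbol{\alpha}}\mathcal{D}_{\widetilde{\boldsymbol{\gamma}}}\boldsymbol{c})_k$ as a doubly-indexed sum $\sum_{i,j}\alpha_{k-2i}\widetilde\gamma_{i-j}(c_k - c_{2j})$ plus a remainder coming from $\sum\widetilde\gamma = 1-\eta \neq 1$, one telescopes each $c_k-c_{2j}$ into at most $2|i-j|+2|k/2 - i|$-many consecutive differences bounded by $\Delta\boldsymbol{c}$; summing the weights $|\alpha_{k-2i}||\widetilde\gamma_{i-j}|$ against $2|i-j|$ gives $\|\boldsymbol{\alpha}\|_1 K_{\widetilde{\boldsymbol{\gamma}}}$ and against $2|k/2-i|$ gives $M K_{\boldsymbol{\alpha}}$ (using $\sum_j|\widetilde\gamma_{i-j}|=M$). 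The main obstacle — and the only real bookkeeping subtlety — is handling the parity split in $\mathcal{S}_{\boldsymbol{\alpha}}$ (the rule differs for even vs.\ odd output indices, per~\eqref{LinearCentersOfMass}) consistently with the telescoping, and making sure the imperfect normalization $\sum\widetilde\gamma=1-\eta$ is accounted for exactly once, so that its contribution matches the $\eta\|\boldsymbol{\alpha}\|_1\|\boldsymbol{c}\|_\infty$ term rather than being double-counted. Once the single-level bound~\eqref{TruncatedGammaBound} is established, it holds verbatim for every $\ell$ since the argument used nothing about $\boldsymbol{c}^{(\ell)}$ beyond its being a real-valued sequence.
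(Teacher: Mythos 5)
Your ``direct'' estimate --- expanding $c^{(\ell)}_k-(\mathcal{S}_{\boldsymbol{\alpha}}\mathcal{D}_{\widetilde{\boldsymbol{\gamma}}}\boldsymbol{c}^{(\ell)})_k$ with the weights $\alpha_{k-2i}\widetilde{\gamma}_{i-n}$, splitting off the normalization deficiency, telescoping $|c^{(\ell)}_k-c^{(\ell)}_{2n}|\le(2|i-n|+|k-2i|)\Delta\boldsymbol{c}^{(\ell)}$, and summing weights to get $K_{\widetilde{\boldsymbol{\gamma}}}\|\boldsymbol{\alpha}\|_1+MK_{\boldsymbol{\alpha}}$ plus the $\eta\|\boldsymbol{\alpha}\|_1\|\boldsymbol{c}^{(\ell)}\|_\infty$ term --- is exactly the paper's proof, and the preliminary two-term splitting through $\mathcal{D}_{\boldsymbol{\gamma}}$, which would only yield the larger constant $K_{\boldsymbol{\alpha},\boldsymbol{\gamma}}$, is rightly set aside. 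One small correction: $\sum_i\widetilde{\gamma}_i=1-\sum_{i\notin\Omega_\varepsilon}\gamma_i$, which equals $1-\eta$ only if the discarded tail is nonnegative (it alternates in sign for the cubic B-spline); since the argument only needs $\big|\sum_{i\notin\Omega_\varepsilon}\gamma_i\big|\le\eta$, this does not affect the conclusion, and the parity bookkeeping you flag is harmless because $\sum_i|\alpha_{k-2i}|\le\|\boldsymbol{\alpha}\|_1$ and $\sum_i|\alpha_{k-2i}||k-2i|\le K_{\boldsymbol{\alpha}}$ for either parity of $k$.
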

	\begin{proof}
		First, we calculate a general term in $\boldsymbol{d}^{(\ell)}$. For $k\in\mathbb{Z}$ we have 
		\begin{align*}
			d^{(\ell)}_k & = c^{(\ell)}_k - \sum_i\alpha_{k-2i}\big(c^{(\ell-1)}\big)_i = \sum_i\alpha_{k-2i} \big(c^{(\ell)}_k - c^{(\ell-1)}_i\big) = \sum_i \alpha_{k-2i} \bigg(c^{(\ell)}_k -\sum_{n} \widetilde{\gamma}_{i-n}c^{(\ell)}_{2n}\bigg)\\
			& = \sum_i \alpha_{k-2i} \bigg(\sum_{n}\gamma_{i-n} c^{(\ell)}_k -\sum_{\substack{n \\ i-n\in\Omega_\varepsilon}} \gamma_{i-n}c^{(\ell)}_{2n}\bigg) \\ 
			& = \sum_i \alpha_{k-2i} \bigg(\sum_{\substack{n \\ i-n\in\Omega_\varepsilon}}\gamma_{i-n} \big(c^{(\ell)}_k-c^{(\ell)}_{2n}\big) +\sum_{\substack{n \\ i-n\notin\Omega_\varepsilon}} \gamma_{i-n}c^{(\ell)}_{k}\bigg).
		\end{align*}
		Consequently, 
		\begin{align*}
			\|\boldsymbol{d}^{(\ell)}\|_\infty & \leq \sum_i |\alpha_{k-2i}| \bigg(\sum_{\substack{n \\ i-n\in\Omega_\varepsilon}}|\gamma_{i-n}|\cdot |c^{(\ell)}_k-c^{(\ell)}_{2n}| +\sum_{\substack{n \\ i-n\notin\Omega_\varepsilon}} |\gamma_{i-n}|\cdot|c^{(\ell)}_{k}|\bigg) \\
			& \leq \sum_i |\alpha_{k-2i}| \bigg(\sum_{\substack{n \\ i-n\in\Omega_\varepsilon}}|\gamma_{i-n}|\cdot |2n-k|\cdot \Delta\boldsymbol{c}^{(\ell)} +\eta|c^{(\ell)}_{k}|\bigg) \\
			& \leq \sum_i |\alpha_{k-2i}| \bigg(\sum_{\substack{n \\ i-n\in\Omega_\varepsilon}}|\gamma_{i-n}|\cdot \big(|2n-2i|+|k-2i|\big)\cdot  \Delta\boldsymbol{c}^{(\ell)} +\eta\|\boldsymbol{c}^{(\ell)}\|_\infty\bigg) \\
			& \leq \sum_i |\alpha_{k-2i}| \bigg(\big(K_{\widetilde{\boldsymbol{\gamma}}} +M|k-2i|\big)\cdot \Delta\boldsymbol{c}^{(\ell)} + \eta\|\boldsymbol{c}^{(\ell)}\|_\infty\bigg) \\ 
			& \leq \big(K_{\widetilde{\boldsymbol{\gamma}}}\|\boldsymbol{\alpha}\|_1 + MK_{\boldsymbol{\alpha}}\big)\Delta\boldsymbol{c}^{(\ell)}+ \eta\|\boldsymbol{\alpha}\|_1\|\boldsymbol{c}^{(\ell)}\|_\infty.
		\end{align*}
	\end{proof}
	
	The term $\eta\|\boldsymbol{\alpha}\|_1\|\boldsymbol{c}^{(\ell)}\|_\infty$ in Theorem~\ref{LinearTruncatedDecayTHM} is a direct result of the truncation~\eqref{Truncation}. In particular, comparing with Theorem~\ref{LinearGammaDecayTHM}, $\varepsilon\to 0^+$ implies that  $\Omega_\varepsilon \to \mathbb{Z}, \widetilde{\boldsymbol{\gamma}} \to \boldsymbol{\gamma}, \eta\to 0, {K_{\widetilde{\boldsymbol{\gamma}}}}\to K_{\boldsymbol{\gamma}}, M\to \|\boldsymbol{\gamma}\|_1$, and thus $K_{\boldsymbol{\alpha},\widetilde{\boldsymbol{\gamma}}}\to K_{\boldsymbol{\alpha},\boldsymbol{\gamma}}$. Consequently, by substituting the terms in~\eqref{TruncatedGammaBound}, we obtain the exact bound as it appears in Theorem~\ref{LinearGammaDecayTHM}. More on the term $\eta\|\boldsymbol{\alpha}\|_1\|\boldsymbol{c}^{(\ell)}\|_\infty$, see Section~\ref{LinearNumericalExamples}.
	
	\subsection{Normalization of the truncated coefficients}
	
	Motivated by the case of manifold-valued data and following constructions of manifold-valued subdivision schemes, we require the truncated mask $\widetilde{\boldsymbol{\gamma}}$ to be shift invariant.
	
	\begin{defin} \label{def:zeta_normalization}
		Given a finitely supported mask $\widetilde{\boldsymbol{\gamma}}$ as in~\eqref{Truncation}, we define $\boldsymbol{\zeta}$ to be the following normalized mask,
		\begin{align}~\label{Normalization}
			\zeta_k=\frac{\widetilde{\gamma}_k}{\sum_{i\in\Omega_\varepsilon}\widetilde\gamma_i}, \quad k\in\mathbb{Z}.
		\end{align}
	\end{defin}
	
	As it turns out, the normalized truncated sequence $\boldsymbol{\zeta}$ of Definition~\ref{def:zeta_normalization} directly affects the decay rate of the detail coefficients. Let $\mathcal{S}_{\boldsymbol{\alpha}}$ be a subdivision scheme, and let $\mathcal{D}_{\boldsymbol{\zeta}}$ be its corresponding even-inverse decimation operator associated with the normalized truncated mask $\boldsymbol{\zeta}$ as defined in~\eqref{Normalization}. Then, we define the multiscale transform,
	\begin{align}~\label{LinearNormalizedMultiscaleTransform}
		\boldsymbol{c}^{(\ell-1)}=\mathcal{D}_{\boldsymbol{\zeta}}\boldsymbol{c}^{(\ell)}, \quad \boldsymbol{d}^{(\ell)}=\boldsymbol{c}^{(\ell)}-\mathcal{S}_{\boldsymbol{\alpha}} \boldsymbol{c}^{(\ell-1)}, \quad \ell = 1,2,\dots,J.
	\end{align}
	
	The following theorem shows that the $\sup$ norms of the detail coefficients generated by~\eqref{LinearNormalizedMultiscaleTransform} are proportional to $\Delta \boldsymbol{c}^{(\ell)}$.
	
	\begin{theorem}~\label{LinearNormalizedDecayTHM}
		Let  $\boldsymbol{c}^{(J)}$ be a real-valued sequence and denote by $\left\{\boldsymbol{c}^{(0)};\boldsymbol{d}^{(1)},\dots,\boldsymbol{d}^{(J)}\right\}$ its multiscale transform generated by~\eqref{LinearNormalizedMultiscaleTransform}. The subdivision scheme is $\mathcal{S}_{\boldsymbol{\alpha}}$ and $\mathcal{D}_{\boldsymbol{\zeta}}$ is its corresponding decimation operator with the normalized mask $\boldsymbol{\zeta}$ of~\eqref{Normalization} where $\boldsymbol{\gamma}$ solves~\eqref{ConvolutionalEquation}. Then, 
		\begin{align}~\label{NormalizedZetaBound}
			\|\boldsymbol{d}^{(\ell)}\|_\infty\leq K_{\boldsymbol{\alpha},\boldsymbol{\zeta}} \Delta\boldsymbol{c}^{(\ell)}, \quad \ell=1,2,\dots,J,
		\end{align}
		where $K_{\boldsymbol{\alpha},\boldsymbol{\zeta}} = K_{\boldsymbol{\zeta}}\|\boldsymbol{\alpha}\|_1 + M K_{\boldsymbol{\alpha}}$ 
		with $M=\sum_{i\in \Omega_\varepsilon} |\zeta_i|$ and $ K_{\boldsymbol{\zeta}} = 2\sum_{i\in\Omega_\varepsilon}|\zeta_{i}||i|$.
	\end{theorem}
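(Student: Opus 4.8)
The plan is to mimic the computation in the proof of Theorem~\ref{LinearTruncatedDecayTHM}, but with the normalized mask $\boldsymbol{\zeta}$ in place of $\widetilde{\boldsymbol{\gamma}}$, exploiting the fact that $\boldsymbol{\zeta}$ is shift invariant, i.e.\ $\sum_{i\in\Omega_\varepsilon}\zeta_i=1$. First I would expand a general term $d^{(\ell)}_k = c^{(\ell)}_k - (\mathcal{S}_{\boldsymbol{\alpha}}\boldsymbol{c}^{(\ell-1)})_k$ using $c^{(\ell-1)}_i = \sum_n \zeta_{i-n}c^{(\ell)}_{2n}$, and then insert the resolution of identity $c^{(\ell)}_k = \sum_n \zeta_{i-n}\,c^{(\ell)}_k$, which is now legitimate because the $\zeta$'s sum to $1$. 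This yields
\begin{align*}
	d^{(\ell)}_k = \sum_i \alpha_{k-2i}\sum_{\substack{n\\ i-n\in\Omega_\varepsilon}} \zeta_{i-n}\,\big(c^{(\ell)}_k - c^{(\ell)}_{2n}\big),
\end{align*}
with no leftover ``error'' term of the form $\eta\|\boldsymbol{\alpha}\|_1\|\boldsymbol{c}^{(\ell)}\|_\infty$, since the truncation tail has been absorbed into the normalization. This is the key point that distinguishes this bound from~\eqref{TruncatedGammaBound}: the normalization removes the dependence on $\|\boldsymbol{c}^{(\ell)}\|_\infty$.

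Next I would estimate $\|\boldsymbol{d}^{(\ell)}\|_\infty$ by taking absolute values, using $|c^{(\ell)}_k - c^{(\ell)}_{2n}| \le |k-2n|\,\Delta\boldsymbol{c}^{(\ell)}$, and then the triangle inequality $|k-2n| \le |k-2i| + |2i-2n| = |k-2i| + 2|i-n|$. Summing over $n$ with $i-n\in\Omega_\varepsilon$ gives the factor $\sum_{j\in\Omega_\varepsilon}|\zeta_j|\big(|k-2i| + 2|j|\big) = M|k-2i| + K_{\boldsymbol{\zeta}}$, with $M$ and $K_{\boldsymbol{\zeta}}$ as defined in the statement. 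Finally, summing over $i$ against $|\alpha_{k-2i}|$ and using $\sum_i |\alpha_{k-2i}| = \|\boldsymbol{\alpha}\|_1$ together with $\sum_i |\alpha_{k-2i}|\,|k-2i| \le K_{\boldsymbol{\alpha}} = \sum_i|\alpha_i||i|$ (valid since this is a sub-sum of the shift-invariant convolution) produces
\begin{align*}
	\|\boldsymbol{d}^{(\ell)}\|_\infty \le \big(K_{\boldsymbol{\zeta}}\|\boldsymbol{\alpha}\|_1 + M K_{\boldsymbol{\alpha}}\big)\,\Delta\boldsymbol{c}^{(\ell)},
\end{align*}
which is exactly~\eqref{NormalizedZetaBound}.

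I do not expect a serious obstacle here; the argument is essentially a cleaner rerun of the previous proof. The one place requiring a little care is the bookkeeping in the second displayed line of the expansion: one must insert $\sum_n \zeta_{i-n} c^{(\ell)}_k$ over \emph{all} $n$ and then split according to whether $i-n\in\Omega_\varepsilon$, but since $\zeta_j = 0$ for $j\notin\Omega_\varepsilon$ by construction, the sum over $n$ with $i-n\notin\Omega_\varepsilon$ contributes nothing — this is precisely why the $\eta$-term disappears. The rest is routine application of the triangle inequality and the definitions of $M$, $K_{\boldsymbol{\zeta}}$, $K_{\boldsymbol{\alpha}}$.
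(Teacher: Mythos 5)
Your proposal is correct and follows essentially the same route as the paper: use the shift invariance of $\boldsymbol{\alpha}$ and $\boldsymbol{\zeta}$ to write $d^{(\ell)}_k=\sum_i\alpha_{k-2i}\sum_n\zeta_{i-n}\big(c^{(\ell)}_k-c^{(\ell)}_{2n}\big)$ with no leftover truncation term, then bound via $|k-2n|\leq|k-2i|+2|i-n|$ to obtain $K_{\boldsymbol{\zeta}}\|\boldsymbol{\alpha}\|_1+MK_{\boldsymbol{\alpha}}$. The only cosmetic nit is that $\sum_i|\alpha_{k-2i}|$ equals the sum over one parity class of the mask, so it is bounded by (not equal to) $\|\boldsymbol{\alpha}\|_1$, which is exactly the inequality needed anyway.
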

	\begin{proof}
		First, we calculate a general term in $\boldsymbol{d}^{(\ell)}$. For $k\in\mathbb{Z}$ we have 
		\begin{align*}
			d^{(\ell)}_k & = c^{(\ell)}_k - \sum_i\alpha_{k-2i}c^{(\ell-1)}_i = \sum_i\alpha_{k-2i} \big(c^{(\ell)}_k - c^{(\ell-1)}_i\big) = \sum_i \alpha_{k-2i} \bigg(c^{(\ell)}_k -\sum_{n} \zeta_{i-n}c^{(\ell)}_{2n}\bigg)\\
			& = \sum_i \alpha_{k-2i} \bigg(\sum_{n}\zeta_{i-n} c^{(\ell)}_k -\sum_{n} \zeta_{i-n}c^{(\ell)}_{2n}\bigg) = \sum_i \alpha_{k-2i} \bigg(\sum_{n}\zeta_{i-n} \big(c^{(\ell)}_k-c^{(\ell)}_{2n}\big) \bigg).
		\end{align*}
		Consequently, similar arguments used in the proof of Theorem~\ref{LinearTruncatedDecayTHM} yield to
		\begin{align*}
			\|\boldsymbol{d}^{(\ell)}\|_\infty & \leq \sum_i |\alpha_{k-2i}| \bigg(K_{\boldsymbol{\zeta}} +M\cdot|k-2i|\bigg)\Delta\boldsymbol{c}^{(\ell)}\\ 
			& \leq \big(K_{\boldsymbol{\zeta}}\|\boldsymbol{\alpha}\|_1 + MK_{\boldsymbol{\alpha}}\big)\cdot \Delta\boldsymbol{c}^{(\ell)} = K_{\boldsymbol{\alpha},\boldsymbol{\zeta}}\Delta\boldsymbol{c}^{(\ell)},
		\end{align*}
		as required.
	\end{proof}
	
	To realize the importance of the normalization, we estimate the magnitude of the term $\Delta\boldsymbol{c}^{(\ell)}$ in~\eqref{NormalizedZetaBound} with respect to the level $\ell$. This is achieved by assuming a prior on the sequence $\boldsymbol{c}^{(J)}$, as the following lemma suggests.
	
	\begin{lemma}~\label{LinearDifferenceLEM}
		Let $f\colon\mathbb{R}\to\mathbb{R}$ be a differentiable, bounded real-valued function. Denote by $\boldsymbol{c}^{(J)}$, $J\in\mathbb{N}$ the function's samples over the grid $2^{-J}\mathbb{Z}$, that is, $\boldsymbol{c}^{(J)}=f\mid_{2^{-J}\mathbb{Z}}$, and let $\mathcal{D}_{\boldsymbol{\zeta}}$ be a decimation operator~\eqref{LinearDecimationScheme} associated with the shift invariant sequence $\boldsymbol{\zeta}$. Then,
		\begin{align}~\label{LinearDifferenceBound}
			\Delta\boldsymbol{c}^{(\ell)} \leq \|\boldsymbol{\zeta}\|^J_1\cdot\|f^\prime\|_\infty\cdot(2\|\boldsymbol{\zeta}\|_1)^{-\ell}, \quad \ell=0,1,\dots,J,
		\end{align}
		where $\|f^\prime\|_\infty = \sup_{x \in \mathbb{R}}|f^\prime(x)|$ and the sequences $\boldsymbol{c}^{(\ell)}$ are generated iteratively by~\eqref{LinearNormalizedMultiscaleTransform}.
	\end{lemma}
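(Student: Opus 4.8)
The plan is to prove the bound by a downward induction on $\ell$, starting from the finest scale $\ell=J$ and descending to $\ell=0$, with a single-step estimate relating $\Delta\boldsymbol{c}^{(\ell-1)}$ to $\Delta\boldsymbol{c}^{(\ell)}$ as the engine of the induction.

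\textbf{Base case ($\ell=J$).} Since $c^{(J)}_k = f(k/2^J)$ for all $k\in\mathbb{Z}$, the mean value theorem applied to $f$ on $[k/2^J,(k+1)/2^J]$ gives $|c^{(J)}_{k+1}-c^{(J)}_k|\le \|f'\|_\infty\cdot 2^{-J}$, hence $\Delta\boldsymbol{c}^{(J)}\le \|f'\|_\infty\,2^{-J}$. One checks that this equals $\|\boldsymbol{\zeta}\|_1^J\|f'\|_\infty(2\|\boldsymbol{\zeta}\|_1)^{-J}$, so the claimed inequality holds at $\ell=J$.

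\textbf{One decimation step.} Using $c^{(\ell-1)}_k=\sum_{i}\zeta_{k-i}c^{(\ell)}_{2i}$ and reindexing the shifted sum via $i\mapsto i+1$, I would write
\begin{align*}
c^{(\ell-1)}_{k+1}-c^{(\ell-1)}_k=\sum_{i}\zeta_{k-i}\big(c^{(\ell)}_{2i+2}-c^{(\ell)}_{2i}\big).
\end{align*}
The bracketed term spans two grid steps, so $|c^{(\ell)}_{2i+2}-c^{(\ell)}_{2i}|\le 2\Delta\boldsymbol{c}^{(\ell)}$ by the triangle inequality; summing $|\zeta_{k-i}|$ over $i$ and taking the supremum over $k$ yields $\Delta\boldsymbol{c}^{(\ell-1)}\le 2\|\boldsymbol{\zeta}\|_1\,\Delta\boldsymbol{c}^{(\ell)}$. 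Iterating this from $J$ down to $\ell$ gives $\Delta\boldsymbol{c}^{(\ell)}\le (2\|\boldsymbol{\zeta}\|_1)^{J-\ell}\Delta\boldsymbol{c}^{(J)}$, and combining with the base case and the identity $(2\|\boldsymbol{\zeta}\|_1)^{J-\ell}\cdot 2^{-J}=\|\boldsymbol{\zeta}\|_1^J(2\|\boldsymbol{\zeta}\|_1)^{-\ell}$ produces~\eqref{LinearDifferenceBound}.

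\textbf{Main obstacle.} There is no deep difficulty here; the proof is essentially a two-line induction. The only points needing care are the reindexing that exhibits a consecutive difference of the decimated sequence as a $\boldsymbol{\zeta}$-weighted combination of two-step differences of the finer sequence, and the bookkeeping that reconciles $(2\|\boldsymbol{\zeta}\|_1)^{J-\ell}2^{-J}$ with the stated form. It is also worth remarking that $\|\boldsymbol{\zeta}\|_1\ge 1$, because $\sum_i\zeta_i=1$ forces $\sum_i|\zeta_i|\ge 1$; this guarantees that the factor $(2\|\boldsymbol{\zeta}\|_1)^{-\ell}$ genuinely decays as the scale refines, which is the whole point of the estimate.
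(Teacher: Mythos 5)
Your proof is correct and follows essentially the same route as the paper: the mean value theorem gives $\Delta\boldsymbol{c}^{(J)}\leq 2^{-J}\|f'\|_\infty$, the single-step estimate $\Delta\boldsymbol{c}^{(\ell-1)}\leq 2\|\boldsymbol{\zeta}\|_1\Delta\boldsymbol{c}^{(\ell)}$ is established (you via direct reindexing, the paper via $\Delta(\boldsymbol{c}\downarrow 2)\leq 2\Delta\boldsymbol{c}$ and the convolution bound, which is the same estimate), and iteration yields~\eqref{LinearDifferenceBound}. No gaps.
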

	\begin{proof}
		Since $f$ is differentiable and bounded, then by the mean value theorem, for all $k\in\mathbb{Z}$ and a fixed $J\in\mathbb{N}$, there exists $x_k$ in the open segment, which connects the parametrizations of $c^{(J)}_{k}$ and $c^{(J)}_{k+1}$, such that
		\begin{align*}
			|c^{(J)}_{k+1}-c^{(J)}_{k}|=2^{-J}|f^\prime(x_k)|,
		\end{align*}
		and by applying the supremum over all $k \in \mathbb{Z}$ we obtain $\Delta\boldsymbol{c}^{(J)}\leq 2^{-J}\|f^\prime\|_\infty$. Now, observe that for any real-valued sequence $\boldsymbol{c}$ we have
		\[ \Delta(\boldsymbol{c}\downarrow 2) \leq\sup_{k\in\mathbb{Z}}|c_{2k+2}-c_{2k}|\leq2\cdot\sup_{k\in\mathbb{Z}}|c_{k+1}-c_k|=2\Delta\boldsymbol{c},\]
		and since the convolution commutes with $\Delta$, we get 
		\begin{align}~\label{Linear_constant_P}
			\Delta\boldsymbol{c}^{(\ell-1)} &= \Delta\big(\boldsymbol{\zeta}\ast(\boldsymbol{c}^{(\ell)}\downarrow 2)\big) \\
			& \leq \|\boldsymbol{\zeta}\|_1\cdot \Delta(\boldsymbol{c}^{(\ell)}\downarrow 2) \leq 2\|\boldsymbol{\zeta}\|_1\cdot \Delta\boldsymbol{c}^{(\ell)}. \nonumber
		\end{align}
		Iterating the latter inequality starting with $\ell$ gives 
		\[ \Delta \boldsymbol{c}^{(\ell)} \leq 2\|\boldsymbol{\zeta}\|_1\cdot \Delta \boldsymbol{c}^{(\ell+1)} \leq (2\|\boldsymbol{\zeta}\|_1)^{2}\cdot \Delta \boldsymbol{c}^{(\ell+2)} \leq\cdots\leq(2\|\boldsymbol{\zeta}\|_1)^{J-\ell}\cdot \Delta \boldsymbol{c}^{(J)}, \]
		which is equivalent to~\eqref{LinearDifferenceBound}, for any $\ell=0,1,\dots,J$. 
	\end{proof}    
	
	Theorem~\ref{LinearNormalizedDecayTHM} and Lemma~\ref{LinearDifferenceLEM} illustrate the significance of the normalization~\eqref{Normalization}. Specifically, if $\boldsymbol{c}^{(J)}$ is sampled from a differentiable function, then the detail coefficients generated by the multiscale transform~\eqref{LinearNormalizedMultiscaleTransform} are bounded by a geometrically decreasing bound, as the following corollary states.
	
	\begin{coro}~\label{LinearCorollary}
		If $\boldsymbol{c}^{(J)}$ are sampled from a differentiable function $f\colon\mathbb{R}\to\mathbb{R}$ over the grid $2^{-J}\mathbb{Z}$, then the detail coefficients generated by~\eqref{LinearNormalizedMultiscaleTransform} satisfy 
		\begin{align}~\label{LinearSmoothDetails}
			\|\boldsymbol{d}^{(\ell)}\|_\infty \leq K_{\boldsymbol{\alpha},\boldsymbol{\zeta}} \|\boldsymbol{\zeta}\|^J_1\|f^\prime\|_\infty\cdot(2\|\boldsymbol{\zeta}\|_1)^{-\ell}, \quad \ell=1,2,\dots,J.
		\end{align}
		The upper bound in~\eqref{LinearSmoothDetails} decays geometrically with factor $1/(2\|\boldsymbol{\zeta}\|_1)<1/2$, with respect to the level $\ell$.
	\end{coro}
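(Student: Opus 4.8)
The plan is to obtain the corollary by simply chaining the two preceding results: Theorem~\ref{LinearNormalizedDecayTHM}, which bounds each detail layer by $\Delta\boldsymbol{c}^{(\ell)}$, and Lemma~\ref{LinearDifferenceLEM}, which bounds $\Delta\boldsymbol{c}^{(\ell)}$ geometrically in $\ell$ under the hypothesis that $\boldsymbol{c}^{(J)}$ comes from sampling a differentiable $f$ on $2^{-J}\mathbb{Z}$.

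Concretely, I would first invoke Theorem~\ref{LinearNormalizedDecayTHM}, which applies verbatim here since the corollary is set in exactly its framework: a non-interpolating $\mathcal{S}_{\boldsymbol{\alpha}}$, its even-inverse decimation operator $\mathcal{D}_{\boldsymbol{\zeta}}$ built from the normalized truncated mask of~\eqref{Normalization}, and the pyramid produced by the transform~\eqref{LinearNormalizedMultiscaleTransform}. This gives $\|\boldsymbol{d}^{(\ell)}\|_\infty \le K_{\boldsymbol{\alpha},\boldsymbol{\zeta}}\,\Delta\boldsymbol{c}^{(\ell)}$ for $\ell=1,\dots,J$. Next I would apply Lemma~\ref{LinearDifferenceLEM}: the mask $\boldsymbol{\zeta}$ is shift invariant by construction ($\sum_{i\in\Omega_\varepsilon}\zeta_i=1$), so the lemma yields $\Delta\boldsymbol{c}^{(\ell)} \le \|\boldsymbol{\zeta}\|_1^{J}\,\|f'\|_\infty\,(2\|\boldsymbol{\zeta}\|_1)^{-\ell}$. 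Substituting this into the previous inequality produces precisely~\eqref{LinearSmoothDetails}.

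For the final assertion, I would note that the $\ell$-dependence of the right-hand side of~\eqref{LinearSmoothDetails} is entirely carried by the factor $(2\|\boldsymbol{\zeta}\|_1)^{-\ell}$, so the bound decays geometrically with ratio $1/(2\|\boldsymbol{\zeta}\|_1)$. It remains to see this ratio is strictly below $1/2$, equivalently $\|\boldsymbol{\zeta}\|_1>1$. The reverse triangle inequality together with shift invariance gives $\|\boldsymbol{\zeta}\|_1 = \sum_{i\in\Omega_\varepsilon}|\zeta_i| \ge \bigl|\sum_{i\in\Omega_\varepsilon}\zeta_i\bigr| = 1$, and equality would force all $\zeta_i$ (hence all $\widetilde\gamma_i$ on $\Omega_\varepsilon$) to share a single sign; this cannot happen for a non-interpolating scheme, whose decimation mask $\boldsymbol{\gamma}$ genuinely oscillates in sign (cf.\ Examples~\ref{QuadraticExample} and~\ref{CubicExample}), and truncation plus normalization preserve these signs. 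Hence $\|\boldsymbol{\zeta}\|_1>1$ and $1/(2\|\boldsymbol{\zeta}\|_1)<1/2$.

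The argument is essentially bookkeeping, so I do not expect a real obstacle; the only step deserving a sentence of justification is the strict inequality $\|\boldsymbol{\zeta}\|_1>1$, i.e.\ that the normalized truncated decimation mask really has entries of both signs — true for the non-interpolating schemes considered, and the point at which one uses non-interpolation essentially.
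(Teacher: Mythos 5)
Your proof is correct and takes essentially the same route as the paper, which obtains the corollary simply by substituting the estimate of Lemma~\ref{LinearDifferenceLEM} (applicable since $\boldsymbol{\zeta}$ is shift invariant by the normalization~\eqref{Normalization}) into the bound of Theorem~\ref{LinearNormalizedDecayTHM}. Your extra justification of $\|\boldsymbol{\zeta}\|_1>1$ addresses a point the paper leaves unproved and is sound for the B-spline masks treated here, though note it tacitly assumes the truncation parameter is small enough that $\widetilde{\boldsymbol{\gamma}}$ retains coefficients of both signs.
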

	
	A direct result from Corollary~\ref{LinearCorollary} suggests that, the detail coefficients corresponding to the highest scale can be as small as we desire. In particular, for any $\sigma>0$ one can find a sufficiently large $J_0\in\mathbb{N}$ such that $\|\boldsymbol{d}^{(J)}\|_\infty\leq \sigma$ for all $J>J_0$. A direct calculation for the minimal $J_0$ yields 
	\begin{align}~\label{MinimalJ}
		J_0=\floor{\log_2\big(\|f^\prime\|_\infty\cdot K_{\boldsymbol{\alpha},\boldsymbol{\zeta}}/\sigma\big)}+1,
	\end{align}
	in contrary to the transform~\eqref{LinearTruncatedMultiscaleTransform} where such $J_0$ is not guaranteed. Note that according to~\eqref{MinimalJ}, when $\|f^\prime\|_\infty$ is large, for example, if $f$ is rapidly changing, we need a denser grid to achieve small enough details, that is $\|\boldsymbol{d}^{(J)}\|_\infty\leq \sigma$.
	
	
	\section{Non-interpolating transform for manifold-valued data}
	
	Interpolating multiscale transforms have been studied in various nonlinear settings~\cite{grohs2009interpolatory, grohs2012definability, kaur2017contrast, lv2018novel}. In this section, we aim to adapt the non-interpolating multiscale transform~\eqref{LinearNormalizedMultiscaleTransform} to manifold-valued data.
	
	\subsection{The Riemannian analogue of a linear decimation operator}
	
	Let $\boldsymbol{\zeta}$ be a normalized finitely supported mask~\eqref{Normalization}, and let $\mathcal{M}$ be a Riemannian manifold. We extend the decimation operator~\eqref{LinearDecimationScheme} to $\mathcal{M}$ with the help of its Riemannian geodesic distance~\eqref{RiemannianDistance}, similar to the extension of the subdivision schemes, as done in Section~\ref{RiemannianAnalogue}.
	
	For any $\mathcal{M}$-valued sequence $\boldsymbol{c}$, we define
	\begin{align}~\label{ManifoldFrechetMeanDecimation}
		\mathcal{Y}_{\boldsymbol{\zeta}}(\boldsymbol{c})_{k} = \text{arg}\min_{x\in\mathcal{M}}\sum_{i\in\mathbb{Z}}\zeta_{k-i}\rho(x,c_{2i})^2, \quad k\in\mathbb{Z}.
	\end{align}
	Namely, $\mathcal{Y}_{\boldsymbol{\zeta}}(\boldsymbol{c})_{k}$ is interpreted as the Riemannian center of mass of the elements $c_{2i}$ with the corresponding weights $\zeta_{k-i}$. Moreover, it is the Riemannian analogue of the linear decimation operator $\mathcal{D}_{\boldsymbol{\zeta}}$. Results in~\cite{hardering2015intrinsic} provide conditions for the global existence of a minimizer in~\eqref{ManifoldFrechetMeanDecimation}. We formulate one such result in the following lemma.
	
	\begin{lemma}~\label{HHLemma}
		Let $\boldsymbol{\zeta}\in\mathbb{R}^m$ be a shift invariant mask, and let $\boldsymbol{c}=\left\{c_1,c_2,\dots,c_m\right\}$ be a set of $\mathcal{M}$-valued points satisfying $\rho(c_1,c_k)\leq r$ for some $r>0$ and for all $k=1,2,\dots,m$. Then, the objective function $h \colon \mathcal{M} \to \mathbb{R}$ defined as,
		\begin{align}~\label{ObjectiveFunction}
			h(x)=\sum_{i=1}^m\zeta_i\rho(x,c_{i})^2
		\end{align}
		has at least one minimum. Moreover, there exists a constant $R\leq6m\|\boldsymbol{\zeta}\|_\infty$ such that all minima of~\eqref{ObjectiveFunction} lie inside a compact ball centred around $c_1$ with radius $rR$. If $r$ is small enough, with respect to the curvature of $\mathcal{M}$, then~\eqref{ObjectiveFunction} has a unique solution.
	\end{lemma}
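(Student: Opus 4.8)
The plan is to follow the standard strategy for Fréchet-type means on manifolds: establish existence by a compactness/coercivity argument on a suitable sublevel set, localize the minimizers inside an explicit ball by a direct estimate on $h$, and then invoke the known local uniqueness results (already cited in the paper via~\cite{hardering2015intrinsic}) once $r$ is small relative to the curvature. First I would fix the reference point $c_1$ and consider the closed geodesic ball $\overline{B}(c_1, \rho_0)$ for a radius $\rho_0$ to be chosen. Since $h$ is continuous and $\overline{B}(c_1,\rho_0)$ is compact (the ball is contained in a region where the exponential map is well behaved for $r$ small, or one simply uses that closed bounded sets in a complete manifold are compact by Hopf--Rinow), $h$ attains a minimum on that ball. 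The crux is to show that no minimizer escapes the ball, i.e. that $h(x) > h(c_1)$ for every $x$ with $\rho(x,c_1) = \rho_0$ when $\rho_0$ exceeds $rR$. This converts the constrained minimum into an unconstrained one.

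For the localization estimate, I would bound $h(c_1)$ from above and $h(x)$ from below for $x$ on the sphere of radius $\rho_0$. Using the triangle inequality $\rho(c_1,c_i) \le r$ gives $h(c_1) = \sum_i \zeta_i \rho(c_1,c_i)^2 \le r^2 \sum_i |\zeta_i| \le r^2 m \|\boldsymbol{\zeta}\|_\infty$. For the lower bound, for $x$ with $\rho(x,c_1) = \rho_0$ we have $\rho(x,c_i) \ge \rho_0 - r$ whenever $\rho_0 > r$, so splitting the sum over positive and negative weights and applying $\sum_i \zeta_i = 1$ together with crude bounds $|\zeta_i| \le \|\boldsymbol{\zeta}\|_\infty$, one gets a lower bound of the form $h(x) \ge (\rho_0 - r)^2 - (\text{const}\cdot m\|\boldsymbol{\zeta}\|_\infty)\big((\rho_0+r)^2 - (\rho_0-r)^2\big)$, where the subtracted term accounts for the negatively-weighted points (this is where the factor $6$ and the linear dependence on $m\|\boldsymbol{\zeta}\|_\infty$ enter). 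Requiring this lower bound to exceed $r^2 m\|\boldsymbol{\zeta}\|_\infty$ and solving for $\rho_0$ yields an admissible choice $\rho_0 = rR$ with $R \le 6m\|\boldsymbol{\zeta}\|_\infty$; any minimizer of $h$ on $\mathcal{M}$ therefore lies in $\overline{B}(c_1, rR)$, and since $h$ has a minimum on that compact ball which is not attained on its boundary, it is a global minimizer of $h$.

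The final uniqueness claim I would not reprove from scratch: once all minimizers are confined to a ball of radius $rR$ around $c_1$, and $r$ (hence $rR$) is small enough that this ball lies within a geodesically convex region on which $x \mapsto \rho(x,c_i)^2$ is strictly convex along geodesics — the classical threshold being roughly half the injectivity radius and a bound involving $\sqrt{\kappa}$ for sectional curvature $\le \kappa$ — the function $h$ is a positive-combination-dominated sum that is strictly convex on that region (the positive weights contribute strict convexity, and smallness of $r$ controls the negative contributions), so the minimizer is unique. This last step is the part that relies most essentially on an external result; I expect the main obstacle in a self-contained treatment to be making the curvature-dependent smallness condition on $r$ quantitative, since one must balance the strict convexity constant of the squared distance against the total negative weight $\sum_{\zeta_i < 0}|\zeta_i|$, which can be large even though $\sum_i \zeta_i = 1$. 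Here, however, it suffices to cite~\cite{hardering2015intrinsic} for the existence of such a threshold, and the bookkeeping reduces to the elementary estimates above.
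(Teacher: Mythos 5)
The paper does not actually prove this lemma: it is stated as a reformulation of results from~\cite{hardering2015intrinsic} and is used as an imported fact, so there is no internal proof to compare against. Your reconstruction is nevertheless essentially sound and is a reasonable self-contained substitute for the existence and localization claims. The bookkeeping works out: with $A=\sum_{\zeta_i>0}\zeta_i$, $B=\sum_{\zeta_i<0}|\zeta_i|$, shift invariance gives $A-B=1$ and $A+B=\|\boldsymbol{\zeta}\|_1\le m\|\boldsymbol{\zeta}\|_\infty$, and comparing $h(c_1)\le m\|\boldsymbol{\zeta}\|_\infty r^2$ with the lower bound $h(x)\ge A(D-r)^2-B(D+r)^2=D^2+r^2-2Dr(A+B)$ for $\rho(x,c_1)=D$ yields a quadratic in $D$ whose larger root is at most $(1+\sqrt{2})\,m\|\boldsymbol{\zeta}\|_\infty r$, comfortably within the stated bound $R\le 6m\|\boldsymbol{\zeta}\|_\infty$. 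Two small points you should make explicit: the compactness of closed geodesic balls requires completeness of $\mathcal{M}$ (Hopf--Rinow), which is implicit in the lemma but worth stating as a hypothesis of your argument; and the exclusion of minimizers must hold for \emph{all} $x$ outside the ball, not only on the sphere $\rho(x,c_1)=\rho_0$ --- this does follow from your estimate because the quadratic lower bound in $D$ is increasing past its vertex at $D=r(A+B)<\rho_0$, but the sentence as written only treats the boundary. Deferring the curvature-dependent uniqueness threshold to~\cite{hardering2015intrinsic} is exactly what the paper itself does, so that step is consistent with the source; a fully quantitative uniqueness proof would indeed require balancing the convexity constant of $\rho(\cdot,c_i)^2$ against the total negative weight, as you note.
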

	
	Henceforth, we require any $\mathcal{M}$-valued admissible sequences to obey the strong conditions of Lemma~\eqref{HHLemma} for any shift invariant mask of our decimation operator $\mathcal{Y}_{\boldsymbol{\zeta}}$, that is, the Riemannian center of mass exists and unique. Moreover, we say that the decimation operator $\mathcal{Y}_{\boldsymbol{\zeta}}$ of~\eqref{ManifoldFrechetMeanDecimation} is the even-inverse of the subdivision scheme $\mathcal{T}_{\boldsymbol{\alpha}}$ of~\eqref{ManifoldFrechetMean} if its linear version $\mathcal{D}_{\boldsymbol{\zeta}}$, associated with the normalized mask $\boldsymbol{\zeta}$ of~\eqref{Normalization}, where $\boldsymbol{\gamma}$ solves~\eqref{ConvolutionalEquation}, is the even-inverse of the linear scheme $\mathcal{S}_{\boldsymbol{\alpha}}$. We illustrate the process of adaptation of the decimation operator to manifold values in the diagram of Figure~\ref{OperatorsDiagram}.
	
	\begin{figure}[H]
		\centering
		\begin{tikzcd}[sep=large,row sep=large]
			\mathcal{S}_{\boldsymbol{\alpha}} \arrow[swap, "\text{adaptation}", d, Leftrightarrow] \arrow[r,rightarrow, "\text{solving~\eqref{ConvolutionalEquation}}"] & [7ex]
			\mathcal{D}_{\boldsymbol{\gamma}} \arrow[r, "\text{truncation}"] & [7ex]
			\mathcal{D}_{\widetilde{\boldsymbol{\gamma}}} \arrow[r, "\text{normalization}"] &[7ex] \mathcal{D}_{\boldsymbol{\zeta}} \arrow[d, Leftrightarrow, "\text{ adaptation}"] \\
			\mathcal{T}_{\boldsymbol{\alpha}} \arrow[rrr,rightarrow] & & & \mathcal{Y}_{\boldsymbol{\zeta}}
		\end{tikzcd}
		\caption{The upper and lower rows represent linear and non-linear operators, respectively. The process of finding the even-inverse of the manifold-valued subdivision scheme $\mathcal{T}_{\boldsymbol{\alpha}}$ is done indirectly by going through the linear operators and adapting the resulting decimation operator.}
		\label{OperatorsDiagram}
	\end{figure}
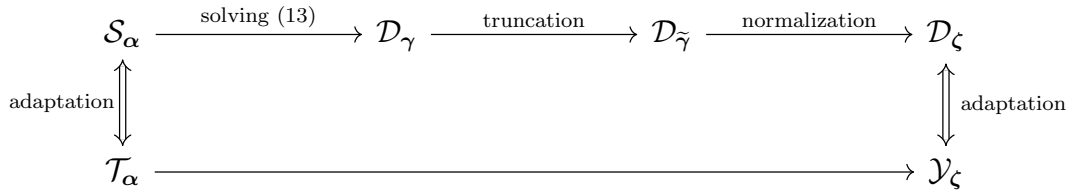
	
	\subsection{The non-interpolating multiscale transform for manifold data}
	
	Now that we have the adapted subdivision operator $\mathcal{T}_{\boldsymbol{\alpha}}$ and its corresponding decimation operator $\mathcal{Y}_{\boldsymbol{\zeta}}$ being defined, we proceed by adjusting the linear pyramid transform~\eqref{LinearNormalizedMultiscaleTransform} to manifold values. A first difference between the manifold and linear versions of the transform lies in the coefficients. For a manifold-valued transform, the sequence $\boldsymbol{c}^{(\ell)}$ at level $\ell$ is a $\mathcal{M}$-valued sequence, while the detail coefficients $\boldsymbol{d}^{(\ell)}$ are elements in the tangent bundle $T\mathcal{M}=\bigcup_{p\in\mathcal{M}}\left\{ p\right\}\times T_p\mathcal{M}$ associated with $\mathcal{M}$.
	
	Recall that in a Riemannian manifold $\mathcal{M}$, the exponential mapping $\exp_p$ maps a vector $v$ in the tangent space $T_p\mathcal{M}$ to the end point of a geodesic of length $\|v\|$, which emanates from $p\in\mathcal{M}$ with initial tangent vector $v$. Inversely, $\log_p$ is the inverse map of $\exp_p$ that takes an $\mathcal{M}$-valued element $q$ and returns a vector in the tangent space $T_p\mathcal{M}$. Following similar notations used in~\cite{grohs2009interpolatory}, we denote both maps by
	\begin{align}~\label{ManifoldOperations}
		\log_p(q)=q\ominus p, \quad\text{and}\quad \exp_p(v)=p\oplus v. 
	\end{align}
	We have thus defined the analogues $\ominus$ and $\oplus$ of the `` - '' and `` + '' operations, respectively.
	
	For any point $p\in\mathcal{M}$, we use the following notation $\ominus\colon\mathcal{M}^2\to T_p\mathcal{M}$ and $\oplus\colon\mathcal{M}\times T_p\mathcal{M}\to \mathcal{M}$. Then, the compatibility condition is
	\begin{align}  \label{CompatibilityConditions}
		(p\oplus v) \ominus p = v,
	\end{align}
	for all $v\in T_p\mathcal{M}$ within the injectivity radius of $\mathcal{M}$. That being so, we are now capable of introducing our non-interpolating multiscale transform for manifold-valued data. Let $\mathcal{M}$ be a Riemannian manifold, and let $\boldsymbol{c}^{(J)}$, $J\in\mathbb{N}$ be $\mathcal{M}$-valued sequence associated with the values over the grid $2^{-J}\mathbb{Z}$. Given a non-interpolating subdivision scheme $\mathcal{T}_{\boldsymbol{\alpha}}$ with its corresponding decimation operator $\mathcal{Y}_{\boldsymbol{\zeta}}$, we define the non-interpolating multiscale transform   
	\begin{align}~\label{ManifoldNonInterpolatingTransform}
		\boldsymbol{c}^{(\ell-1)}=\mathcal{Y}_{\boldsymbol{\zeta}}\boldsymbol{c}^{(\ell)}, \quad \boldsymbol{d}^{(\ell)}=\boldsymbol{c}^{(\ell)}\ominus\mathcal{T}_{\boldsymbol{\alpha}} \boldsymbol{c}^{(\ell-1)}, \quad \ell = 1,2,\dots,J,
	\end{align}
	where the operation $\ominus$ is the $\log$ map~\eqref{ManifoldOperations} associated with $\mathcal{M}$.
	
	Indeed, the transform~\eqref{ManifoldNonInterpolatingTransform} is the non-linear analogue of~\eqref{LinearNormalizedMultiscaleTransform}. The process~\eqref{ManifoldNonInterpolatingTransform} yields a pyramid of sequences $\left\{\boldsymbol{c}^{(0)};\boldsymbol{d}^{(1)},\dots,\boldsymbol{d}^{(J)}\right\}$ where $\boldsymbol{c}^{(0)}$ are the coarse approximation coefficients given over the integers, and $\boldsymbol{d}^{(\ell)}$, $\ell=1,2,\dots,J$ are the detail coefficients at level $\ell$ given over the values of the grids $2^{-\ell}\mathbb{Z}$, respectively. By the construction of~\eqref{ManifoldNonInterpolatingTransform} we verify that
	$\boldsymbol{c}^{(0)}$ is an $\mathcal{M}$-valued sequence and the elements of  $\boldsymbol{d}^{(\ell)}$ lie in $T\mathcal{M}$ for all $\ell=1,2,\dots,J$. Then, we obtain synthesis by the iterations,
	\begin{align}~\label{ManifoldNonInterpolatoryReconstruction}
		\boldsymbol{c}^{(\ell)}=\mathcal{T}_{\boldsymbol{\alpha}} \boldsymbol{c}^{(\ell-1)}\oplus\boldsymbol{d}^{(\ell)}, \quad \ell=1,2,\dots,J.
	\end{align}
	The synthesis~\eqref{ManifoldNonInterpolatoryReconstruction} is the analogue of~\eqref{LinearInterpolatoryReconstruction}, and it is the inverse transform of~\eqref{ManifoldNonInterpolatingTransform}.
	
	
	\section{Coefficients decay and reconstruction stability}
	
	A vital feature of any multiscale transform is the rate at which the detail coefficients become small and, therefore, from a certain point, negligible. This feature is also the basis of various thresholding techniques for compression, smoothness analysis, and denoising, as we will see in Section~\ref{NumericalExamples}. This section shows that under stability of subdivision schemes, the detail coefficients are bounded, and the inverse transform is stable.
	
	\subsection{Displacement safe operators}
	
	We proceed with a series of useful definitions. Firstly, for admissible  $\mathcal{M}$-valued sequences $\boldsymbol{c}$ and $\boldsymbol{m}$, we denote the following. Let
	\[  \Delta_\mathcal{M}(\boldsymbol{c})=\sup_{k\in\mathbb{Z}}\rho(c_{k+1},c_{k}) \quad \text{and} \quad \mu(\boldsymbol{c},\boldsymbol{m})=\sup_{k\in\mathbb{Z}}\rho(c_k,m_k) ,\] be the supremum distance between consecutive elements in $\boldsymbol{c}$ and the corresponding elements of the sequences $\boldsymbol{c}$ and $\boldsymbol{m}$, respectively. We continue with the definition of displacement-safe subdivision schemes, as introduced in~\cite{dyn2017manifold}. This condition means the refinement operator generates points in a controlled fashion regarding the data points' distances. In linear schemes, for example, this condition is automatically satisfied.
	\begin{defin}~\label{Displacement-Safe}
		We say that the subdivision operator $\mathcal{T}$ is \emph{displacement-safe} if there exists $E_\mathcal{T}\geq0$ such that 
		$\mu(\boldsymbol{c},\mathcal{T}(\boldsymbol{c})\downarrow 2)\leq E_\mathcal{T}\Delta_\mathcal{M}(\boldsymbol{c})$
		for any $\mathcal{M}$-valued sequence $\boldsymbol{c}$.
	\end{defin}
	
	Displacement-safety plays a significant role for the convergence analysis of non-interpolating subdivision schemes over manifolds~\cite{dyn2017manifold, SvenjaWallnerConvergenceSphere}. Next, we introduce a new condition analogous to Definition~\ref{Displacement-Safe} , but for decimation operators.
	\begin{defin}~\label{Decimation-Safe}
		We say that the decimation operator $\mathcal{Y}$ is \emph{decimation-safe} if there exists $F_\mathcal{Y}\geq0$ such that 
		$\mu(\mathcal{Y}\boldsymbol{c},\boldsymbol{c}\downarrow 2)\leq F_\mathcal{Y}\Delta_\mathcal{M}(\boldsymbol{c})$
		for any $\mathcal{M}$-valued sequence $\boldsymbol{c}$.
	\end{defin}
	
	To simplify the terminologies, we say that the pair $\big(\mathcal{T}_{\boldsymbol{\alpha}},\mathcal{Y}_{\boldsymbol{\zeta}}\big)$ is \emph{safe} if simultaneously both the operator $\mathcal{T}_{\boldsymbol{\alpha}}$ is displacement-safe and $\mathcal{Y}_{\boldsymbol{\zeta}}$ is its corresponding decimation-safe decimation operator. With the definitions being stated, we note that the linear pyramid transforms~\eqref{LinearTruncatedMultiscaleTransform} and~\eqref{LinearNormalizedMultiscaleTransform} involve safe pairs of subdivision and decimation operators, as we state and prove in the following lemma.
	
	\begin{lemma}
		In the Euclidean case, the linear pair $\big(\mathcal{S}_{\boldsymbol{\alpha}},\mathcal{D}_{\boldsymbol{\zeta}}\big)$ is safe.
	\end{lemma}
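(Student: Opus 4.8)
The plan is to verify the two defining inequalities of Definitions~\ref{Displacement-Safe} and~\ref{Decimation-Safe} directly, exploiting the fact that in the Euclidean case both $\mathcal{S}_{\boldsymbol{\alpha}}$ and $\mathcal{D}_{\boldsymbol{\zeta}}$ are convolution operators with shift invariant masks, so that $\rho$ is just $|\cdot|$, $\mu$ is $\|\cdot-\cdot\|_\infty$, and $\Delta_{\mathcal{M}}$ is $\Delta$. First I would handle $\mathcal{S}_{\boldsymbol{\alpha}}$: since the mask $\boldsymbol{\alpha}$ is shift invariant~\eqref{AlphaInvariance}, for the even-indexed refinement we can write, for any $k$, the difference $c_k-(\mathcal{S}_{\boldsymbol{\alpha}}\boldsymbol{c}\downarrow 2)_k = c_k - \sum_i \alpha_{2i}c_{k-i} = \sum_i \alpha_{2i}(c_k - c_{k-i})$ using $\sum_i\alpha_{2i}=1$, and then bound $|c_k-c_{k-i}|\leq |i|\,\Delta\boldsymbol{c}$ by telescoping. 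Taking absolute values and summing gives $\mu(\boldsymbol{c},\mathcal{S}_{\boldsymbol{\alpha}}\boldsymbol{c}\downarrow 2)\leq \big(\sum_i |\alpha_{2i}|\,|i|\big)\Delta\boldsymbol{c} \leq K_{\boldsymbol{\alpha}}\Delta\boldsymbol{c}$, so $\mathcal{S}_{\boldsymbol{\alpha}}$ is displacement-safe with $E_{\mathcal{S}_{\boldsymbol{\alpha}}}=K_{\boldsymbol{\alpha}}$.

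Next I would treat $\mathcal{D}_{\boldsymbol{\zeta}}$ analogously. By definition $\mathcal{D}_{\boldsymbol{\zeta}}(\boldsymbol{c})_k=\sum_i \zeta_{k-i}c_{2i}$ while $(\boldsymbol{c}\downarrow 2)_k = c_{2k}$, and since $\boldsymbol{\zeta}$ is shift invariant ($\sum_i\zeta_i=1$ by~\eqref{Normalization}, because the $\widetilde{\gamma}_i$ are divided by their own sum) we may write $\mathcal{D}_{\boldsymbol{\zeta}}(\boldsymbol{c})_k-c_{2k}=\sum_i \zeta_{k-i}(c_{2i}-c_{2k})=\sum_j \zeta_j (c_{2k-2j}-c_{2k})$. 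Bounding $|c_{2k-2j}-c_{2k}|\leq 2|j|\,\Delta\boldsymbol{c}$ and summing yields $\mu(\mathcal{D}_{\boldsymbol{\zeta}}\boldsymbol{c},\boldsymbol{c}\downarrow 2)\leq \big(2\sum_j |\zeta_j|\,|j|\big)\Delta\boldsymbol{c} = K_{\boldsymbol{\zeta}}\Delta\boldsymbol{c}$, so $\mathcal{D}_{\boldsymbol{\zeta}}$ is decimation-safe with $F_{\mathcal{D}_{\boldsymbol{\zeta}}}=K_{\boldsymbol{\zeta}}$ (finite since $\boldsymbol{\zeta}$ has finite support). Since $\mathcal{D}_{\boldsymbol{\zeta}}$ is by construction the even-inverse of $\mathcal{S}_{\boldsymbol{\alpha}}$, the pair is safe by definition, which completes the argument.

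I do not expect a genuine obstacle here; the one point requiring a little care is making sure the shift invariance of $\boldsymbol{\zeta}$ is correctly invoked — the normalization~\eqref{Normalization} is precisely what forces $\sum_i\zeta_i=1$, and without it the telescoping trick (which needs the weights to sum to one so that the constant term cancels) would fail. The remaining manipulations are the same elementary telescoping estimates already used in the proofs of Theorems~\ref{LinearTruncatedDecayTHM} and~\ref{LinearNormalizedDecayTHM}, so I would keep the writeup short and simply point back to those computations rather than repeating them in full.
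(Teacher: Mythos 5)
Your proposal is correct and takes essentially the same approach as the paper: the decimation-safe part is the identical telescoping computation, writing $\mathcal{D}_{\boldsymbol{\zeta}}(\boldsymbol{c})_k-c_{2k}=\sum_i\zeta_{k-i}(c_{2i}-c_{2k})$ via $\sum_i\zeta_i=1$ and bounding by $2\sum_i|\zeta_{k-i}|\,|k-i|\,\Delta\boldsymbol{c}$, finite by the compact support of $\boldsymbol{\zeta}$. The only difference is cosmetic: the paper delegates the displacement-safety of $\mathcal{S}_{\boldsymbol{\alpha}}$ to the citation~\cite{dyn2017manifold}, whereas you carry out the analogous direct computation with constant $\sum_i|\alpha_{2i}|\,|i|$, which is equally valid.
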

	\begin{proof}
		The proof that $\mathcal{S}_{\boldsymbol{\alpha}}$ is displacement safe appears in~\cite{dyn2017manifold}, we proceed with proving that linear decimation operators are decimation-safe. For any real-valued sequence $\boldsymbol{c}$ and index $k\in\mathbb{Z}$ we have
		\begin{align*}
			|(\mathcal{D}_{\boldsymbol{\zeta}} \boldsymbol{c})_k-(\boldsymbol{c}\downarrow 2)_k|&=\big|\sum_{i\in\mathbb{Z}}{\zeta}_{k-i}c_{2i} - c_{2k}\big|=\big|\sum_{i\in\mathbb{Z}}\zeta_{k-i}c_{2i} - \sum_{i\in\mathbb{Z}}\zeta_{k-i}c_{2k}\big| \\
			& =\big|\sum_{i\in\mathbb{Z}}\zeta_{k-i}\big(c_{2i} - c_{2k}\big)\big|\leq 2\sum_{i\in\mathbb{Z}}|\zeta_{k-i}|\cdot|k-i|\cdot \Delta \boldsymbol{c}.
		\end{align*}
		Now, applying $\sup_{k\in\mathbb{Z}}$ on both sides gives
		$$\|\mathcal{D}_{\boldsymbol{\zeta}}\boldsymbol{c}-\boldsymbol{c}\downarrow 2\|_\infty\leq F_{\mathcal{D}} \Delta \boldsymbol{c},$$
		with $F_{\mathcal{D}}=\sup_{k\in\mathbb{Z}}2\sum_{i\in\mathbb{Z}}|\zeta_{k-i}|\cdot|k-i|<\infty$ since $\boldsymbol{\zeta}$ has a compact support. 
	\end{proof}
	
	Note that in the case where $\mathcal{T}_{\boldsymbol{\alpha}}$ is interpolating, the corresponding decimation operator $\mathcal{Y}_{\boldsymbol{\zeta}}$ is simply the downsampling operator $\downarrow 2$, see e.g.,~\cite{grohs2010stability, grohs2009interpolatory}. Thus, the pair $\big(\mathcal{T}_{\boldsymbol{\alpha}},\mathcal{Y}_{\boldsymbol{\zeta}}\big)$ is safe. In particular, $E_\mathcal{T}=F_\mathcal{Y}=0$. Otherwise, Lemma~\ref{HHLemma} guarantees that the pair $\big(\mathcal{T}_{\boldsymbol{\alpha}},\mathcal{Y}_{\boldsymbol{\zeta}}\big)$ is safe, as the following proposition suggests.
	
	\begin{prop}~\label{Safe_Pair}
		Let $\mathcal{T}_{\boldsymbol{\alpha}}$ be a non-interpolating subdivision scheme over $\mathcal{M}$, the pair $\big(\mathcal{T}_{\boldsymbol{\alpha}},\mathcal{Y}_{\boldsymbol{\zeta}}\big)$ is safe for any truncation parameter $\varepsilon$ involved in determining the shift invariant mask $\boldsymbol{\zeta}$.
	\end{prop}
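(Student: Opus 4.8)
The plan is to check the two conditions that define a \emph{safe} pair separately. The displacement-safety of $\mathcal{T}_{\boldsymbol{\alpha}}$ is already available in the literature: it is proved in~\cite{dyn2017manifold} for subdivision operators defined as Riemannian centers of mass with shift invariant masks, so nothing new is needed there. The work is therefore to show that the corresponding decimation operator $\mathcal{Y}_{\boldsymbol{\zeta}}$ is decimation-safe, that is, to produce a finite constant $F_{\mathcal{Y}}$ with $\mu(\mathcal{Y}_{\boldsymbol{\zeta}}\boldsymbol{c},\boldsymbol{c}\downarrow 2)\leq F_{\mathcal{Y}}\,\Delta_{\mathcal{M}}(\boldsymbol{c})$ for every admissible $\mathcal{M}$-valued sequence $\boldsymbol{c}$. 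The idea is to fix an index $k$, recall that $(\mathcal{Y}_{\boldsymbol{\zeta}}\boldsymbol{c})_k$ is by~\eqref{ManifoldFrechetMeanDecimation} a minimizer of $h_k(x)=\sum_{i}\zeta_{k-i}\rho(x,c_{2i})^2$, a sum over the \emph{finite} index window $\{i : k-i\in\Omega_\varepsilon\}$, and then invoke Lemma~\ref{HHLemma} to localize that minimizer near $c_{2k}$.

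Concretely, set $m=|\Omega_\varepsilon|$ and let $w=\max_{p,q\in\Omega_\varepsilon}|p-q|$ be the diameter of $\Omega_\varepsilon$, so that $k-i\in\Omega_\varepsilon$ forces $|k-i|\leq w$. Choose any index $j$ with $k-j\in\Omega_\varepsilon$ as a reference point; chaining consecutive geodesic distances gives $\rho(c_{2j},c_{2i})\leq|2j-2i|\,\Delta_{\mathcal{M}}(\boldsymbol{c})\leq 2w\,\Delta_{\mathcal{M}}(\boldsymbol{c})$ for every $i$ in the window, so with $r=2w\,\Delta_{\mathcal{M}}(\boldsymbol{c})$ the hypotheses of Lemma~\ref{HHLemma} hold for $h_k$ (re-enumerate the window so that $c_{2j}$ is listed first). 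The lemma then puts all minima of $h_k$, in particular $(\mathcal{Y}_{\boldsymbol{\zeta}}\boldsymbol{c})_k$, inside the closed ball about $c_{2j}$ of radius $rR$ with $R\leq 6m\|\boldsymbol{\zeta}\|_\infty$; combining with $\rho(c_{2j},c_{2k})\leq 2w\,\Delta_{\mathcal{M}}(\boldsymbol{c})$ (again $k-j\in\Omega_\varepsilon$) and the triangle inequality yields $\rho\big((\mathcal{Y}_{\boldsymbol{\zeta}}\boldsymbol{c})_k,c_{2k}\big)\leq 2w(R+1)\,\Delta_{\mathcal{M}}(\boldsymbol{c})$. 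Taking the supremum over $k\in\mathbb{Z}$ gives decimation-safety with $F_{\mathcal{Y}}=2w(6m\|\boldsymbol{\zeta}\|_\infty+1)$, a constant that depends on $\boldsymbol{\zeta}$ and hence on $\varepsilon$ but is finite for every admissible truncation parameter — which is exactly what Definition~\ref{Decimation-Safe} asks. Together with the displacement-safety of $\mathcal{T}_{\boldsymbol{\alpha}}$ this shows $(\mathcal{T}_{\boldsymbol{\alpha}},\mathcal{Y}_{\boldsymbol{\zeta}})$ is safe.

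I expect the only delicate point to be the bookkeeping around Lemma~\ref{HHLemma}: one must make sure the window $\{c_{2i}:k-i\in\Omega_\varepsilon\}$ is genuinely finite (it is, by the geometric decay~\eqref{GammaCoefficientsDecay} which forces $\Omega_\varepsilon$ to be finite), that a reference point $c_{2j}$ can be taken among the window points (true whenever $\Omega_\varepsilon\neq\emptyset$, which holds for any $\varepsilon$ for which $\boldsymbol{\zeta}$ is even defined by~\eqref{Normalization}), and that the admissibility assumption supplies the uniqueness clause of Lemma~\ref{HHLemma} so that $\mathcal{Y}_{\boldsymbol{\zeta}}\boldsymbol{c}$ is well defined. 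None of these is a real obstacle; the remaining manipulations are the same chaining of geodesic distances already used in the linear decimation-safety lemma preceding this proposition.
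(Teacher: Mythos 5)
Your argument is correct and follows essentially the same route as the paper: both proofs localize the Fr\'echet mean $(\mathcal{Y}_{\boldsymbol{\zeta}}\boldsymbol{c})_k$ via Lemma~\ref{HHLemma} applied to the finite window indexed by $\Omega_\varepsilon$ and then bound the relevant radius by $\Delta_{\mathcal{M}}(\boldsymbol{c})$ through chained consecutive distances, yielding $F_{\mathcal{Y}}$ proportional to $|\Omega_\varepsilon|\|\boldsymbol{\zeta}\|_\infty$. The only cosmetic differences are that the paper centers the ball at $c_{2k}$ and phrases the chaining bound through an auxiliary ratio $\mathcal{H}$ (which your explicit chaining renders unnecessary), and that the paper dispatches the displacement-safety of $\mathcal{T}_{\boldsymbol{\alpha}}$ by the same Lemma~\ref{HHLemma} argument rather than by citing~\cite{dyn2017manifold} as you do.
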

	
	\begin{proof}
		Here we only prove that $\mathcal{Y}_{\boldsymbol{\zeta}}$ is decimation safe, see Definition~\ref{Decimation-Safe}, the proof that $\mathcal{T}_{\boldsymbol{\alpha}}$ is displacement safe is similar. Let $\boldsymbol{c}$ be an admissible $\mathcal{M}$-valued sequence satisfying
		$$\mathcal{H}=\frac{\sup_{|i-j|\leq |\Omega_\varepsilon| }\rho(c_i,c_j)}{\Delta_{\mathcal{M}}(\boldsymbol{c})}<\infty,$$
		where $\Omega_\varepsilon$ is the compact support of $\boldsymbol{\zeta}$, as in~\eqref{Truncation}. We express $\boldsymbol{c}$ as a countable union of pairwise overlapping sets $\Xi_k$, $k\in\mathbb{Z}$, where $\Xi_k$ consists of all the elements of $\boldsymbol{c}$ involved in calculating $\mathcal{Y}_{\boldsymbol{\zeta}}(\boldsymbol{c})_k$, see~\eqref{ManifoldFrechetMeanDecimation}. Indeed, $\Xi_k$ contains $|\Omega_\varepsilon|$-many elements including $c_{2k}$. Lemma~\ref{HHLemma} guarantees $$\rho(\mathcal{Y}_{\boldsymbol{\zeta}}(\boldsymbol{c})_k, c_{2k})\leq 6|\Omega_\varepsilon|\|\boldsymbol{\zeta}\|_\infty \cdot r_k,$$
		where $r_k = \max_{c_{j}\in\Xi_k}\rho (c_{2k},c_{j})$. By the definition of $\mathcal{H}$, we have $r_k\leq \mathcal{H}\Delta_{\mathcal{M}}(\boldsymbol{c})$, $k\in\mathbb{Z}$. Thus, applying $\sup_{k \in \mathbb{Z}}$ yields $\mu(\mathcal{Y}_{\boldsymbol{\zeta}}, \boldsymbol{c}\downarrow 2)\leq F_{\mathcal{Y}}\Delta_{\mathcal{M}}(\boldsymbol{c})$, as required, with $F_{\mathcal{Y}} = 6|\Omega_\varepsilon|\|\boldsymbol{\zeta}\|_\infty\mathcal{H}$.
	\end{proof}
	
	In the same manner of using Lemma~\ref{HHLemma} to prove Proposition~\ref{Safe_Pair}, the former guarantees that for any admissible $\mathcal{M}$-valued sequence $\boldsymbol{c}$, we have 
	\begin{align}~\label{Q_constant}
		\Delta_{\mathcal{M}}(\mathcal{T}_{\boldsymbol{\alpha}}\boldsymbol{c}) \leq Q\Delta_{\mathcal{M}}(\boldsymbol{c})
	\end{align}
	for some $Q>0$. Estimate~\eqref{Q_constant} is used in the proof of Proposition~\ref{Q_proposition} in the next section. Later on, in the proof of Lemma~\ref{ManifoldDecayLEM}, we will observe the analogue of~\eqref{Q_constant}, but for decimation operators.
	
	\subsection{Decay of detail coefficients}
	
	We proceed with defining a stable subdivision rule.
	
	\begin{defin}~\label{Stability}
		We say that the subdivision operator $\mathcal{T}$ is stable if there exists $S_\mathcal{T}\geq0$ such that
		$\mu(\mathcal{T}\boldsymbol{c}, \mathcal{T}\boldsymbol{m})\leq S_\mathcal{T}\mu(\boldsymbol{c},\boldsymbol{m})$
		for all $\mathcal{M}$-valued sequences $\boldsymbol{c}$ and $\boldsymbol{m}$.
	\end{defin}
	Stable subdivision schemes have been studied in~\cite{grohs2010stability}. The next proposition helps in proving the following theorem, which is the analogue to Theorem~\ref{LinearNormalizedDecayTHM}.
	\begin{prop}~\label{Q_proposition}
		For any admissible sequence $\boldsymbol{c}$, we have $$\mu\big(\boldsymbol{c}, \mathcal{T}_{\boldsymbol{\alpha}}(\boldsymbol{c} \downarrow2)\big) \leq (1 + 2E_{\mathcal{T}} + Q)\Delta_\mathcal{M}(\boldsymbol{c}),$$
		where the constants $E_{\mathcal{T}}$ and $Q$ are from Definition~\ref{Displacement-Safe} and~\eqref{Q_constant}, respectively.
	\end{prop}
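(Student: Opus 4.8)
The goal is to bound $\mu\big(\boldsymbol{c}, \mathcal{T}_{\boldsymbol{\alpha}}(\boldsymbol{c}\downarrow 2)\big)$ by a multiple of $\Delta_\mathcal{M}(\boldsymbol{c})$, and the natural strategy is to compare the sequence $\mathcal{T}_{\boldsymbol{\alpha}}(\boldsymbol{c}\downarrow 2)$ with $\mathcal{T}_{\boldsymbol{\alpha}}(\boldsymbol{c})\downarrow 2$, since the displacement-safe property (Definition~\ref{Displacement-Safe}) already controls how far $\mathcal{T}_{\boldsymbol{\alpha}}(\boldsymbol{c})\downarrow 2$ is from $\boldsymbol{c}$. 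First I would insert that intermediate sequence via the triangle inequality for the supremum distance $\mu$:
\[
\mu\big(\boldsymbol{c}, \mathcal{T}_{\boldsymbol{\alpha}}(\boldsymbol{c}\downarrow 2)\big) \leq \mu\big(\boldsymbol{c}, \mathcal{T}_{\boldsymbol{\alpha}}(\boldsymbol{c})\downarrow 2\big) + \mu\big(\mathcal{T}_{\boldsymbol{\alpha}}(\boldsymbol{c})\downarrow 2,\ \mathcal{T}_{\boldsymbol{\alpha}}(\boldsymbol{c}\downarrow 2)\big).
\]
The first term is at most $E_{\mathcal{T}}\Delta_\mathcal{M}(\boldsymbol{c})$ directly by displacement-safety.

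For the second term, the plan is to bound the distance between the two $\mathcal{T}_{\boldsymbol{\alpha}}$-refined sequences by the input discrepancy $\mu(\boldsymbol{c}, \boldsymbol{c}\uparrow\downarrow)$-type quantity — more precisely, note that $\mathcal{T}_{\boldsymbol{\alpha}}(\boldsymbol{c})\downarrow 2$ and $\mathcal{T}_{\boldsymbol{\alpha}}(\boldsymbol{c}\downarrow 2)$ are the even-indexed samples of $\mathcal{T}_{\boldsymbol{\alpha}}(\boldsymbol{c})$ versus the full refinement of the decimated sequence, so I want to relate them through one more application of displacement-safety. Writing $\boldsymbol{c}' = \boldsymbol{c}\downarrow 2$, observe that $\mathcal{T}_{\boldsymbol{\alpha}}(\boldsymbol{c}')\downarrow 2$ is close to $\boldsymbol{c}'$ (again displacement-safety, contributing another $E_{\mathcal{T}}\Delta_\mathcal{M}(\boldsymbol{c}')$), and $\boldsymbol{c}'_k = c_{2k}$ sits within $\mathcal{T}_{\boldsymbol{\alpha}}(\boldsymbol{c})\downarrow 2$ up to the same kind of displacement bound; combining these, the factor $2E_{\mathcal{T}}$ in the statement should emerge as the sum of these two displacement contributions, while the $Q$ term comes from~\eqref{Q_constant} applied to $\Delta_\mathcal{M}(\mathcal{T}_{\boldsymbol{\alpha}}\boldsymbol{c})$. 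I would also use the elementary fact that $\Delta_\mathcal{M}(\boldsymbol{c}\downarrow 2) \leq 2\Delta_\mathcal{M}(\boldsymbol{c})$, which accounts for why each $E_{\mathcal{T}}$ coming from a decimated sequence still only costs a bounded multiple of $\Delta_\mathcal{M}(\boldsymbol{c})$, and the leading $1$ in $(1 + 2E_{\mathcal{T}} + Q)$ absorbs the trivial comparison of $\boldsymbol{c}$ with itself at the start of a telescoping chain.

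The main obstacle I anticipate is correctly organizing the telescoping so that exactly the coefficient $1 + 2E_{\mathcal{T}} + Q$ appears rather than something weaker like $1 + 3E_{\mathcal{T}} + 2Q$; this requires being careful about which index grid each $\Delta_\mathcal{M}$ is measured on and not double-counting the factor-$2$ blowup from decimation. Concretely, the cleanest route is probably: bound $\rho(c_k, (\mathcal{T}_{\boldsymbol{\alpha}}\boldsymbol{c})_{2k})$ by $E_{\mathcal{T}}\Delta_\mathcal{M}(\boldsymbol{c})$, then bound $\rho((\mathcal{T}_{\boldsymbol{\alpha}}\boldsymbol{c})_{2k}, (\mathcal{T}_{\boldsymbol{\alpha}}(\boldsymbol{c}\downarrow 2))_k)$ using that both are refinements evaluated at comparable locations — here I would invoke~\eqref{Q_constant} to control $\Delta_\mathcal{M}$ of the refined sequence and displacement-safety once more on $\boldsymbol{c}\downarrow 2$ — and finally take the supremum over $k$. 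If the direct geometric comparison of the two refined sequences turns out to need a stability hypothesis on $\mathcal{T}_{\boldsymbol{\alpha}}$ that is not yet in force, the fallback is to express everything through the already-established estimates~\eqref{Q_constant} and Definition~\ref{Displacement-Safe} applied in sequence, which should suffice since those are precisely the two named constants in the conclusion.
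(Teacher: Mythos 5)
You have the right ingredients (displacement-safety, the elementary bound $\Delta_\mathcal{M}(\boldsymbol{c}\downarrow 2)\leq 2\Delta_\mathcal{M}(\boldsymbol{c})$, and~\eqref{Q_constant}), but the decomposition you commit to is a detour that cannot produce the stated constant. The paper never inserts $\mathcal{T}_{\boldsymbol{\alpha}}(\boldsymbol{c})\downarrow 2$. It splits indices by parity: for $k=2j$ it applies displacement-safety directly to the sequence that is actually being refined, namely $\boldsymbol{c}\downarrow 2$, giving $\rho\big(c_{2j},\mathcal{T}_{\boldsymbol{\alpha}}(\boldsymbol{c}\downarrow 2)_{2j}\big)=\rho\big((\boldsymbol{c}\downarrow 2)_j,\mathcal{T}_{\boldsymbol{\alpha}}(\boldsymbol{c}\downarrow 2)_{2j}\big)\leq E_{\mathcal{T}}\Delta_\mathcal{M}(\boldsymbol{c}\downarrow 2)\leq 2E_{\mathcal{T}}\Delta_\mathcal{M}(\boldsymbol{c})$; for $k=2j+1$ it uses the three-term triangle inequality through $c_{2j}$ and $\mathcal{T}_{\boldsymbol{\alpha}}(\boldsymbol{c}\downarrow 2)_{2j}$, where the last gap $\rho\big(\mathcal{T}_{\boldsymbol{\alpha}}(\boldsymbol{c}\downarrow 2)_{2j},\mathcal{T}_{\boldsymbol{\alpha}}(\boldsymbol{c}\downarrow 2)_{2j+1}\big)$ is controlled by~\eqref{Q_constant} applied to the refined \emph{decimated} sequence $\mathcal{T}_{\boldsymbol{\alpha}}(\boldsymbol{c}\downarrow 2)$ --- not to $\mathcal{T}_{\boldsymbol{\alpha}}\boldsymbol{c}$ as you indicate. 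This yields exactly the coefficient $1+2E_{\mathcal{T}}+Q$.

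Your route instead compares $c_k$ with $(\mathcal{T}_{\boldsymbol{\alpha}}\boldsymbol{c})_{2k}$ and then $(\mathcal{T}_{\boldsymbol{\alpha}}\boldsymbol{c})_{2k}$ with $\mathcal{T}_{\boldsymbol{\alpha}}(\boldsymbol{c}\downarrow 2)_k$. The second distance cannot be handled by ``one more application of displacement-safety'' alone: Definition~\ref{Displacement-Safe} compares a refined sequence with its own input, never two refinements of different inputs, and since no stability of $\mathcal{T}_{\boldsymbol{\alpha}}$ is assumed in this proposition, the only way forward is to pass back through the anchors $c_{2j}$ (and, at odd $k$, through a neighboring entry of $\mathcal{T}_{\boldsymbol{\alpha}}(\boldsymbol{c}\downarrow 2)$ via~\eqref{Q_constant}). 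Carrying this out gives roughly $4E_{\mathcal{T}}\Delta_\mathcal{M}(\boldsymbol{c})$ at even indices and $(1+4E_{\mathcal{T}}+2Q)\Delta_\mathcal{M}(\boldsymbol{c})$ at odd indices: the round trip through $(\mathcal{T}_{\boldsymbol{\alpha}}\boldsymbol{c})_{2k}$ wastes an extra $2E_{\mathcal{T}}\Delta_\mathcal{M}(\boldsymbol{c})$ compared with the direct argument, so the bound you obtain is strictly weaker than the one claimed whenever $E_{\mathcal{T}}>0$. You flag this risk yourself but do not resolve it, and since the proposition asserts the specific constant $1+2E_{\mathcal{T}}+Q$, this is a genuine gap. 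The missing idea is simply that the intermediate sequence $\mathcal{T}_{\boldsymbol{\alpha}}(\boldsymbol{c})\downarrow 2$ is superfluous: apply displacement-safety to $\boldsymbol{c}\downarrow 2$ itself at even indices and do the parity-split triangle inequality at odd indices, and the exact constant drops out.
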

	\begin{proof}
		Recall that $\mu\big(\boldsymbol{c}, \mathcal{T}_{\boldsymbol{\alpha}}(\boldsymbol{c} \downarrow2)\big)=\sup_{k \in \mathbb{Z}}\rho\big(c_k, \mathcal{T}_{\boldsymbol{\alpha}}(\boldsymbol{c} \downarrow2)_k\big)$. If $k=2j$ is even, then by the
		displacement-safe inequality, see Definition~\ref{Displacement-Safe}, we have
		$$\rho\big(c_{2j}, \mathcal{T}_{\boldsymbol{\alpha}}(\boldsymbol{c} \downarrow2)_{2j}\big) = \rho \big((\boldsymbol{c}\downarrow2)_{j}, \mathcal{T}_{\boldsymbol{\alpha}}(\boldsymbol{c} \downarrow2)_{2j}\big)\leq E_{\mathcal{T}}\Delta_{\mathcal{M}}(\boldsymbol{c}\downarrow2)\leq 2E_{\mathcal{T}}\Delta_{\mathcal{M}}(\boldsymbol{c}).$$
		Otherwise, we have
		\begin{align*}
			\rho\big(c_{2j+1}, \mathcal{T}_{\boldsymbol{\alpha}}(\boldsymbol{c} \downarrow2)_{2j+1}\big) & \leq \rho(c_{2j+1}, c_{2j}) + \rho(c_{2j}, \mathcal{T}_{\boldsymbol{\alpha}}(\boldsymbol{c} \downarrow2)_{2j}) + \rho(\mathcal{T}_{\boldsymbol{\alpha}}(\boldsymbol{c} \downarrow2)_{2j}, \mathcal{T}_{\boldsymbol{\alpha}}(\boldsymbol{c} \downarrow2)_{2j+1}) \\ & \leq (1 + 2E_{\mathcal{T}} + Q)\Delta_{\mathcal{M}}(\boldsymbol{c}),
		\end{align*}
		as required.
	\end{proof}
	\begin{theorem}~\label{DecayTheorem}
		Let $\mathcal{T}_{\boldsymbol{\alpha}}$ be a stable subdivision scheme, then there exists $K\geq 0$ such that the detail coefficients generated by~\eqref{ManifoldNonInterpolatingTransform} satisfy
		\begin{align}~\label{DetailsBound1}
			\|\boldsymbol{d}^{(\ell)}\|_\infty\leq K\Delta_\mathcal{M}(\boldsymbol{c}^{(\ell)}),\quad  \ell=1,2,\dots, J,
		\end{align}
		where $\|\boldsymbol{d}^{(\ell)}\|_\infty = \sup_{k \in \mathbb{Z}}\|d^{(\ell)}_k\|$.
	\end{theorem}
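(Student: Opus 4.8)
The plan is to estimate $\|d^{(\ell)}_k\| = \rho\big(c^{(\ell)}_k, (\mathcal{T}_{\boldsymbol{\alpha}}\boldsymbol{c}^{(\ell-1)})_k\big)$ by comparing the reconstruction $\mathcal{T}_{\boldsymbol{\alpha}}\boldsymbol{c}^{(\ell-1)}$ with the ``naive'' reconstruction $\mathcal{T}_{\boldsymbol{\alpha}}(\boldsymbol{c}^{(\ell)}\downarrow 2)$, since the latter is directly controlled by Proposition~\ref{Q_proposition}. The triangle inequality gives
\begin{align*}
	\rho\big(c^{(\ell)}_k, (\mathcal{T}_{\boldsymbol{\alpha}}\boldsymbol{c}^{(\ell-1)})_k\big) \leq \rho\big(c^{(\ell)}_k, (\mathcal{T}_{\boldsymbol{\alpha}}(\boldsymbol{c}^{(\ell)}\downarrow 2))_k\big) + \rho\big((\mathcal{T}_{\boldsymbol{\alpha}}(\boldsymbol{c}^{(\ell)}\downarrow 2))_k, (\mathcal{T}_{\boldsymbol{\alpha}}\boldsymbol{c}^{(\ell-1)})_k\big).
\end{align*}
Taking the supremum over $k$, the first term is bounded by $(1+2E_\mathcal{T}+Q)\Delta_\mathcal{M}(\boldsymbol{c}^{(\ell)})$ via Proposition~\ref{Q_proposition}. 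For the second term I would use stability of $\mathcal{T}_{\boldsymbol{\alpha}}$ (Definition~\ref{Stability}): it is bounded by $S_\mathcal{T}\,\mu(\boldsymbol{c}^{(\ell)}\downarrow 2, \boldsymbol{c}^{(\ell-1)}) = S_\mathcal{T}\,\mu(\boldsymbol{c}^{(\ell)}\downarrow 2, \mathcal{Y}_{\boldsymbol{\zeta}}\boldsymbol{c}^{(\ell)})$, since $\boldsymbol{c}^{(\ell-1)} = \mathcal{Y}_{\boldsymbol{\zeta}}\boldsymbol{c}^{(\ell)}$ by the transform~\eqref{ManifoldNonInterpolatingTransform}.

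Next I would invoke the decimation-safety of $\mathcal{Y}_{\boldsymbol{\zeta}}$, which holds by Proposition~\ref{Safe_Pair}: $\mu(\mathcal{Y}_{\boldsymbol{\zeta}}\boldsymbol{c}^{(\ell)}, \boldsymbol{c}^{(\ell)}\downarrow 2) \leq F_\mathcal{Y}\Delta_\mathcal{M}(\boldsymbol{c}^{(\ell)})$. Combining the two bounds, one obtains
\begin{align*}
	\|\boldsymbol{d}^{(\ell)}\|_\infty \leq \big(1 + 2E_\mathcal{T} + Q + S_\mathcal{T} F_\mathcal{Y}\big)\Delta_\mathcal{M}(\boldsymbol{c}^{(\ell)}),
\end{align*}
so one may take $K = 1 + 2E_\mathcal{T} + Q + S_\mathcal{T} F_\mathcal{Y}$, which is finite and independent of $\ell$ because all four constants depend only on the fixed operators $\mathcal{T}_{\boldsymbol{\alpha}}$ and $\mathcal{Y}_{\boldsymbol{\zeta}}$ (the latter determined by the fixed truncation parameter $\varepsilon$). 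One small point to be careful about: $\|d^{(\ell)}_k\| = \|c^{(\ell)}_k \ominus (\mathcal{T}_{\boldsymbol{\alpha}}\boldsymbol{c}^{(\ell-1)})_k\|$ equals the geodesic distance $\rho\big(c^{(\ell)}_k, (\mathcal{T}_{\boldsymbol{\alpha}}\boldsymbol{c}^{(\ell-1)})_k\big)$ precisely because the $\log$ map is a radial isometry within the injectivity radius; this is where admissibility of the sequences is used.

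The main obstacle is making sure that the intermediate quantities are legitimate — that $\mathcal{T}_{\boldsymbol{\alpha}}(\boldsymbol{c}^{(\ell)}\downarrow 2)$ is well-defined and that all the points involved stay within a single normal neighborhood so the triangle inequality for $\rho$ and the isometry property of $\log$ apply. This is handled by the standing admissibility assumption and Lemma~\ref{HHLemma}, but it should be stated explicitly. A secondary concern is verifying that the stability constant $S_\mathcal{T}$ and the decimation-safety constant $F_\mathcal{Y}$ are genuinely uniform over all levels $\ell$; this follows since the transform uses the same pair $\big(\mathcal{T}_{\boldsymbol{\alpha}}, \mathcal{Y}_{\boldsymbol{\zeta}}\big)$ at every level, so no level-dependence enters.
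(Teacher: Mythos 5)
Your proposal is correct and follows essentially the same argument as the paper: the same triangle-inequality split through $\mathcal{T}_{\boldsymbol{\alpha}}(\boldsymbol{c}^{(\ell)}\downarrow 2)$, bounding the first term by Proposition~\ref{Q_proposition} and the second by stability of $\mathcal{T}_{\boldsymbol{\alpha}}$ combined with decimation-safety from Proposition~\ref{Safe_Pair}, yielding the identical constant $K = 1 + 2E_{\mathcal{T}} + Q + S_{\mathcal{T}}F_{\mathcal{Y}}$. Your extra remark on why $\|d^{(\ell)}_k\|$ equals the geodesic distance is a sensible clarification that the paper leaves implicit.
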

	\begin{proof}
		First, observe that $\|\boldsymbol{d}^{(\ell)}\|_\infty=\|\boldsymbol{c}^{(\ell)}\ominus\mathcal{T}_{\boldsymbol{\alpha}}\boldsymbol{c}^{(\ell-1)}\|_\infty = \mu(\boldsymbol{c}^{(\ell)}, \mathcal{T}_{\boldsymbol{\alpha}}\boldsymbol{c}^{(\ell-1)})$. Since the pair $\big(\mathcal{T}_{\boldsymbol{\alpha}},\mathcal{Y}_{\boldsymbol{\zeta}}\big)$ is safe, see Proposition~\ref{Safe_Pair}, then by the triangle inequality we have
		\begin{align*}
			\mu(\boldsymbol{c}^{(\ell)}, \mathcal{T}_{\boldsymbol{\alpha}}\boldsymbol{c}^{(\ell-1)}) & \leq \mu\big(\boldsymbol{c}^{(\ell)}, \mathcal{T}_{\boldsymbol{\alpha}}(\boldsymbol{c}^{(\ell)}\downarrow 2)\big)+\mu\big(\mathcal{T}_{\boldsymbol{\alpha}}(\boldsymbol{c}^{(\ell)}\downarrow 2), \mathcal{T}_{\boldsymbol{\alpha}}\mathcal{Y}_{\boldsymbol{\zeta}}\boldsymbol{c}^{(\ell)}\big) \\
			& \leq (1 + 2E_{\mathcal{T}} + Q)\Delta_\mathcal{M}(\boldsymbol{c}^{(\ell)})+S_{\mathcal{T}}\mu\big(\boldsymbol{c}^{(\ell)}\downarrow 2, \mathcal{Y}_{\boldsymbol{\zeta}}\boldsymbol{c}^{(\ell)}\big) \\ & \leq (1 + 2E_{\mathcal{T}} + Q + S_{\mathcal{T}}F_{\mathcal{Y}})\Delta_\mathcal{M}(\boldsymbol{c}^{(\ell)}),
		\end{align*}
		as required where $K=1 + 2E_{\mathcal{T}} + Q + S_{\mathcal{T}}F_{\mathcal{Y}}$.
	\end{proof}
	
	To proceed, we estimate the magnitude of $\Delta_\mathcal{M} (\boldsymbol{c}^{(\ell)})$ in~\eqref{DetailsBound1} by assuming a prior on the admissible data points $\boldsymbol{c}^{(J)}$. Recall that if $\Gamma$ is an $\mathcal{M}$-valued regular differentiable curve, then $\nabla \Gamma(x)$ denotes the intrinsic gradient of $\Gamma$ at point $x\in\mathcal{M}$, i.e., the velocity vector of $\Gamma$ at point $x\in\mathcal{M}$ lying in $T_x\mathcal{M}$~\cite{carmo1992riemannian}. The following lemma is the analogue to Lemma~\ref{LinearDifferenceLEM}.
	
	\begin{lemma}~\label{ManifoldDecayLEM}	
		Let $\Gamma$ be a regular differentiable curve over $\mathcal{M}$. Denote by $\boldsymbol{c}^{(J)}$, $J\in\mathbb{N}$ the curve's samples over the arc-length parametrization grid $2^{-J}\mathbb{Z}$, that is, $\boldsymbol{c}^{(J)}=\Gamma\mid_{2^{-J}\mathbb{Z}}$, and let $\mathcal{Y}_{\boldsymbol{\zeta}}$ be a decimation operator associated with the shift invariant mask $\boldsymbol{\zeta}$. Then, there exists $P>1$, depending on $\boldsymbol{\zeta}$ and the curvature of $\mathcal{M}$, such that 
		\begin{align}~\label{P_proposition}
			\Delta_{\mathcal{M}}(\boldsymbol{c}^{(\ell)}) \leq P^J\|\nabla \Gamma\|_\infty\cdot(2P)^{-\ell}, \quad \ell=0,1,\dots,J,
		\end{align}
		where $\|\nabla \Gamma\|_\infty = \sup_s\|\nabla \Gamma(s)\|$ and the sequences $\boldsymbol{c}^{(\ell)}$ are generated recursively by~\eqref{ManifoldNonInterpolatingTransform}.
	\end{lemma}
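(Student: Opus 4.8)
The plan is to follow the structure of the proof of Lemma~\ref{LinearDifferenceLEM}, replacing its two linear ingredients — the mean value theorem and the contraction estimate $\Delta\big(\boldsymbol{\zeta}\ast(\boldsymbol{c}\downarrow 2)\big)\le 2\|\boldsymbol{\zeta}\|_1\Delta\boldsymbol{c}$ — by their Riemannian analogues. First I would bound $\Delta_\mathcal{M}(\boldsymbol{c}^{(J)})$. Since $\Gamma$ is a regular differentiable curve and $c^{(J)}_k=\Gamma(2^{-J}k)$, the restriction of $\Gamma$ to the segment $\big[2^{-J}k,\,2^{-J}(k+1)\big]$ is an admissible path in the definition~\eqref{RiemannianDistance} of the geodesic distance, so that
\[
\rho\big(c^{(J)}_{k+1},c^{(J)}_{k}\big)\;\le\;\int_{2^{-J}k}^{2^{-J}(k+1)}\|\nabla\Gamma(s)\|\,ds\;\le\;2^{-J}\|\nabla\Gamma\|_\infty,
\]
and taking the supremum over $k\in\mathbb{Z}$ yields $\Delta_\mathcal{M}(\boldsymbol{c}^{(J)})\le 2^{-J}\|\nabla\Gamma\|_\infty$, the analogue of $\Delta\boldsymbol{c}^{(J)}\le 2^{-J}\|f'\|_\infty$.

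Next I would prove the decimation analogue of~\eqref{Q_constant}: there is a constant $P>1$, depending only on $\boldsymbol{\zeta}$ and the curvature of $\mathcal{M}$, such that $\Delta_\mathcal{M}\big(\mathcal{Y}_{\boldsymbol{\zeta}}\boldsymbol{c}\big)\le 2P\,\Delta_\mathcal{M}(\boldsymbol{c})$ for every admissible $\mathcal{M}$-valued sequence $\boldsymbol{c}$. For this, fix $k$ and insert the two intermediate points $c_{2k}$ and $c_{2k+2}$:
\[
\rho\big((\mathcal{Y}_{\boldsymbol{\zeta}}\boldsymbol{c})_{k+1},(\mathcal{Y}_{\boldsymbol{\zeta}}\boldsymbol{c})_{k}\big)\;\le\;\rho\big((\mathcal{Y}_{\boldsymbol{\zeta}}\boldsymbol{c})_{k+1},c_{2k+2}\big)+\rho\big(c_{2k+2},c_{2k}\big)+\rho\big(c_{2k},(\mathcal{Y}_{\boldsymbol{\zeta}}\boldsymbol{c})_{k}\big).
\]
The middle term is at most $2\Delta_\mathcal{M}(\boldsymbol{c})$ by the triangle inequality, while the two outer terms are each at most $\mu\big(\mathcal{Y}_{\boldsymbol{\zeta}}\boldsymbol{c},\boldsymbol{c}\downarrow 2\big)\le F_{\mathcal{Y}}\Delta_\mathcal{M}(\boldsymbol{c})$, using that the pair $\big(\mathcal{T}_{\boldsymbol{\alpha}},\mathcal{Y}_{\boldsymbol{\zeta}}\big)$ is safe by Proposition~\ref{Safe_Pair} together with Definition~\ref{Decimation-Safe}. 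Taking the supremum over $k$ gives the claim with $P=1+F_{\mathcal{Y}}>1$.

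Finally I would iterate: applying the last estimate to $\boldsymbol{c}=\boldsymbol{c}^{(\ell+1)}$ and using $\boldsymbol{c}^{(\ell)}=\mathcal{Y}_{\boldsymbol{\zeta}}\boldsymbol{c}^{(\ell+1)}$ gives $\Delta_\mathcal{M}(\boldsymbol{c}^{(\ell)})\le 2P\,\Delta_\mathcal{M}(\boldsymbol{c}^{(\ell+1)})$ for $\ell=0,1,\dots,J-1$, whence
\[
\Delta_\mathcal{M}(\boldsymbol{c}^{(\ell)})\;\le\;(2P)^{J-\ell}\Delta_\mathcal{M}(\boldsymbol{c}^{(J)})\;\le\;(2P)^{J-\ell}\,2^{-J}\|\nabla\Gamma\|_\infty\;=\;P^{J}\|\nabla\Gamma\|_\infty\,(2P)^{-\ell},
\]
which is exactly~\eqref{P_proposition}. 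The only delicate point — and the main (mild) obstacle — is that this chain invokes decimation-safety of $\mathcal{Y}_{\boldsymbol{\zeta}}$ at every level $\ell$, so one must check that each $\boldsymbol{c}^{(\ell)}$ is admissible in the sense of Lemma~\ref{HHLemma} with a uniform constant $F_{\mathcal{Y}}$; this is precisely where the standing admissibility hypothesis on $\boldsymbol{c}^{(J)}$, together with $2^{-J}$ being small relative to the curvature of $\mathcal{M}$, is used, exactly as in the proof of Proposition~\ref{Safe_Pair}.
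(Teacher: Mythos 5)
Your proposal is correct and follows essentially the same route as the paper's proof: the same triangle inequality through $c_{2k}$ and $c_{2k+2}$ combined with decimation-safety from Proposition~\ref{Safe_Pair} to get $\Delta_{\mathcal{M}}(\mathcal{Y}_{\boldsymbol{\zeta}}\boldsymbol{c})\leq 2P\Delta_{\mathcal{M}}(\boldsymbol{c})$ with $P=1+F_{\mathcal{Y}}$, then iteration and the arc-length bound $\Delta_{\mathcal{M}}(\boldsymbol{c}^{(J)})\leq 2^{-J}\|\nabla\Gamma\|_\infty$. Your explicit justification of that last bound via the definition of the geodesic distance, and your remark on admissibility at every level, merely spell out steps the paper treats as immediate.
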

	
	\begin{proof}
		First, since $\mathcal{Y}_{\boldsymbol{\zeta}}$ is decimation safe, see Proposition~\ref{Safe_Pair}, then for $\ell=1,2,\dots,J$ and $k\in\mathbb{Z}$,
		\begin{align*}
			\rho\big((\mathcal{Y}_{\boldsymbol{\zeta}} \boldsymbol{c}^{(\ell)})_{k+1}, (\mathcal{Y}_{\boldsymbol{\zeta}} \boldsymbol{c}^{(\ell)})_k \big) \leq \rho\big((\mathcal{Y}_{\boldsymbol{\zeta}} \boldsymbol{c}^{(\ell)})_{k+1}, c_{2k+2} \big) + \rho\big( c_{2k+2}, c_{2k} \big) + \rho\big( c_{2k}, (\mathcal{Y}_{\boldsymbol{\zeta}} \boldsymbol{c}^{(\ell)})_k \big).
		\end{align*}
		Applying $\sup_{k \in \mathbb{Z}}$ yields to
		\begin{align*}
			\Delta_{\mathcal{M}}(\mathcal{Y}_{\boldsymbol{\zeta}}\boldsymbol{c}^{(\ell)}) \leq 2F_{\mathcal{Y}}\Delta_{\mathcal{M}}(\boldsymbol{c}^{(\ell)}) + 2\Delta_{\mathcal{M}}(\boldsymbol{c}^{(\ell)})= 2P\cdot \Delta_{\mathcal{M}}(\boldsymbol{c}^{(\ell)}),
		\end{align*}
		where $P = 1+F_\mathcal{Y}$. In other words, $\Delta_{\mathcal{M}}(\boldsymbol{c}^{(\ell-1)})\leq 2P\cdot \Delta_{\mathcal{M}}(\boldsymbol{c}^{(\ell)})$. Iteratively, we get
		\begin{align*}
			\Delta_{\mathcal{M}}(\boldsymbol{c}^{(\ell)})\leq (2P)^{J-\ell} \Delta_{\mathcal{M}}(\boldsymbol{c}^{(J)}), \quad \ell =0,1,\dots, J.
		\end{align*}
		Now, since the sequence $\boldsymbol{c}^{(J)}$ is sampled from a regular differentiable curve $\Gamma$ over the arc-length parametrization grid $2^{-J}\mathbb{Z}$, we immediately have $\Delta_{\mathcal{M}}(\boldsymbol{c}^{(J)})\leq \|\nabla \Gamma\|_\infty 2^{-J}$. In total, inequality~\eqref{P_proposition} follows.
	\end{proof}
	
	Inequality~\eqref{P_proposition} is the analogue to the estimate in~\eqref{LinearDifferenceBound}. The next corollary is the analogue of Corollary~\ref{LinearCorollary}, it shows that if $\boldsymbol{c}^{(J)}$ is sampled from a differentiable curve over the manifold, then the detail coefficients generated by~\eqref{ManifoldNonInterpolatingTransform} are bounded by a geometrically decreasing bound with factor $1/(2P)<1/2$.
	
	\begin{coro}~\label{ManifoldCorollary}
		Let $\boldsymbol{c}^{(J)}$ denote the samples of a regular differentiable curve $\Gamma$ over $\mathcal{M}$ on the equispaced arc-length parameterized grid $2^{-J}\mathbb{Z}$, where $\|\nabla \Gamma\|_\infty$ assumed to be finite. Let $\mathcal{T}_{\boldsymbol{\alpha}}$ be stable subdivision scheme, then the detail coefficients generated by~\eqref{ManifoldNonInterpolatingTransform} satisfy
		\begin{align}~\label{ManifoldDecay}
			\|\boldsymbol{d}^{(\ell)}\|_\infty\leq KP^J \|\nabla \Gamma\|_\infty\cdot(2P)^{-\ell}, \quad \ell =1,2,\dots, J,
		\end{align} 
		with the same values of $K$ in~\eqref{DetailsBound1} and $P$ in~\eqref{P_proposition}. The upper bound in~\eqref{ManifoldDecay} decays geometrically with factor $1/(2P)<1/2$, with respect to the level $\ell$.
	\end{coro}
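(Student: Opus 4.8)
The plan is to obtain~\eqref{ManifoldDecay} by directly chaining the two results already established in this section: the coefficient bound of Theorem~\ref{DecayTheorem} and the geometric estimate of Lemma~\ref{ManifoldDecayLEM}. First I would check that both hypotheses are met under the assumptions of the corollary. The subdivision scheme $\mathcal{T}_{\boldsymbol{\alpha}}$ is assumed stable, which is exactly the hypothesis of Theorem~\ref{DecayTheorem}; the safety of the pair $\big(\mathcal{T}_{\boldsymbol{\alpha}},\mathcal{Y}_{\boldsymbol{\zeta}}\big)$ that the theorem also uses is automatic by Proposition~\ref{Safe_Pair}, for whatever truncation parameter $\varepsilon$ was used to form $\boldsymbol{\zeta}$. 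On the other side, the samples $\boldsymbol{c}^{(J)}$ come from a regular differentiable curve $\Gamma$ taken on the arc-length grid $2^{-J}\mathbb{Z}$, and $\boldsymbol{\zeta}$ is shift invariant, so Lemma~\ref{ManifoldDecayLEM} applies verbatim, producing the constant $P=1+F_{\mathcal{Y}}>1$.

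The main step is then the substitution. Theorem~\ref{DecayTheorem} gives, for each $\ell=1,2,\dots,J$,
\[
\|\boldsymbol{d}^{(\ell)}\|_\infty \leq K\,\Delta_{\mathcal{M}}(\boldsymbol{c}^{(\ell)}),
\]
with $K = 1+2E_{\mathcal{T}}+Q+S_{\mathcal{T}}F_{\mathcal{Y}}$, while Lemma~\ref{ManifoldDecayLEM} gives
\[
\Delta_{\mathcal{M}}(\boldsymbol{c}^{(\ell)}) \leq P^{J}\,\|\nabla\Gamma\|_\infty\,(2P)^{-\ell},\qquad \ell=0,1,\dots,J.
\]
Combining the two inequalities yields $\|\boldsymbol{d}^{(\ell)}\|_\infty \leq K P^{J}\,\|\nabla\Gamma\|_\infty\,(2P)^{-\ell}$ for $\ell=1,\dots,J$, which is precisely~\eqref{ManifoldDecay} with the stated $K$ and $P$. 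Finally, since $F_{\mathcal{Y}}>0$ we have $P>1$, hence $2P>2$ and the factor $1/(2P)$ governing the $\ell$-dependence of the right-hand side satisfies $1/(2P)<1/2$; this establishes the claimed geometric decay in $\ell$.

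There is essentially no analytic obstacle here — the corollary is a bookkeeping consequence of the preceding theorem and lemma — so the only care needed is to confirm that the admissibility requirements (so that $\mathcal{T}_{\boldsymbol{\alpha}}\boldsymbol{c}^{(\ell-1)}$, $\mathcal{Y}_{\boldsymbol{\zeta}}\boldsymbol{c}^{(\ell)}$ and the $\ominus$ in~\eqref{ManifoldNonInterpolatingTransform} are all well defined along the pyramid) persist across all levels, and that the constants $K$ and $P$ are the same objects named in~\eqref{DetailsBound1} and~\eqref{P_proposition}. I would state the proof in two or three lines invoking Theorem~\ref{DecayTheorem} and Lemma~\ref{ManifoldDecayLEM} and then the elementary inequality $P>1\Rightarrow 1/(2P)<1/2$.
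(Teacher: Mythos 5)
Your proposal is correct and coincides with the paper's (implicit) argument: the corollary is obtained exactly by substituting the estimate $\Delta_{\mathcal{M}}(\boldsymbol{c}^{(\ell)})\leq P^J\|\nabla\Gamma\|_\infty(2P)^{-\ell}$ of Lemma~\ref{ManifoldDecayLEM} into the bound $\|\boldsymbol{d}^{(\ell)}\|_\infty\leq K\Delta_{\mathcal{M}}(\boldsymbol{c}^{(\ell)})$ of Theorem~\ref{DecayTheorem}, with $P>1$ giving the factor $1/(2P)<1/2$. Your additional checks (safety via Proposition~\ref{Safe_Pair}, admissibility along the pyramid, and identification of the constants $K$ and $P$) are exactly the right bookkeeping.
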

	
	In inequality~\eqref{ManifoldDecay}, if $\ell=J$ then $\|\boldsymbol{d}^{(J)}\|_\infty\leq K\|\nabla \Gamma\|_\infty \cdot 2^{-J}$. Thus, Corollary~\ref{ManifoldCorollary} guarantees that for any $\sigma>0$ there exists some $J_0\in\mathbb{N}$ such that $\|\boldsymbol{d}^{(J)}\|_\infty\leq\sigma$ for all $J>J_0$. The minimal $J_0$ is similar in its form to~\eqref{MinimalJ}.
	
	\subsection{Stability of inverse transform}
	
	In this section, we discuss the stability of the inverse multiscale transform~\eqref{ManifoldNonInterpolatoryReconstruction}. As a first conclusion, we show that the stability of the subdivision operator $\mathcal{T}_{\boldsymbol{\alpha}}$, as presented in Definition~\ref{Stability}, induces a stability result for the inverse transform.
	
	\begin{theorem}~\label{ManifoldStabilityTHM1}
		Let $\left\{ \boldsymbol{c}^{(0)};\boldsymbol{d}^{(1)},\dots, \boldsymbol{d}^{(J)} \right\}$ and $\left\{ \widetilde{\boldsymbol{c}}^{(0)};\widetilde{\boldsymbol{d}}^{(1)},\dots, \widetilde{\boldsymbol{d}}^{(J)} \right\}$ be two pyramids of sequences. Let $\mathcal{T}_{\boldsymbol{\alpha}}$ be stable with constant $S_{\mathcal{T}}$, as in Definition~\ref{Stability}. Then, the synthesis sequences $\boldsymbol{c}^{(J)}$ and $\widetilde{\boldsymbol{c}}^{(J)}$, which are reconstructed from the above two data pyramids via~\eqref{ManifoldNonInterpolatoryReconstruction}, satisfy 
		\begin{align}~\label{ManifoldStability}
			\mu(\boldsymbol{c}^{(J)},\widetilde{\boldsymbol{c}}^{(J)})\leq L\bigg( \mu(\boldsymbol{c}^{(0)},\widetilde{\boldsymbol{c}}^{(0)}) +\sum_{i=1}^{J} \|\boldsymbol{d}^{(i)}\|_\infty + \|\widetilde{\boldsymbol{d}}^{(i)}\|_\infty \bigg),
		\end{align}
		with $L=1$ if $S_{\mathcal{T}}\leq1$, and $L=S_{\mathcal{T}}^J$ otherwise.
	\end{theorem}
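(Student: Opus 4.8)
The plan is a level-by-level induction on the synthesis \eqref{ManifoldNonInterpolatoryReconstruction} that tracks how discrepancies in the coarse data and in the detail coefficients accumulate. Write $e_\ell = \mu(\boldsymbol{c}^{(\ell)}, \widetilde{\boldsymbol{c}}^{(\ell)})$ for the level-$\ell$ deviation between the two reconstructed sequences. The core of the argument is the one-step estimate
\[
e_\ell \leq S_{\mathcal{T}}\, e_{\ell-1} + \|\boldsymbol{d}^{(\ell)}\|_\infty + \|\widetilde{\boldsymbol{d}}^{(\ell)}\|_\infty, \qquad \ell = 1,2,\dots,J,
\]
after which the theorem follows by unrolling the recursion and distinguishing the cases $S_{\mathcal{T}} \leq 1$ and $S_{\mathcal{T}} > 1$.

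To obtain the one-step estimate, I would fix an index $k$ and insert the two intermediate points $(\mathcal{T}_{\boldsymbol{\alpha}}\boldsymbol{c}^{(\ell-1)})_k$ and $(\mathcal{T}_{\boldsymbol{\alpha}}\widetilde{\boldsymbol{c}}^{(\ell-1)})_k$ into $\rho(c^{(\ell)}_k, \widetilde{c}^{(\ell)}_k)$ via two applications of the triangle inequality, splitting it into three summands. By \eqref{ManifoldNonInterpolatoryReconstruction} we have $c^{(\ell)}_k = (\mathcal{T}_{\boldsymbol{\alpha}}\boldsymbol{c}^{(\ell-1)})_k \oplus d^{(\ell)}_k$, and since $\exp_p$ sends a tangent vector $v$ to the endpoint of a geodesic of length $\|v\|$ emanating from $p$, the first summand equals $\|d^{(\ell)}_k\| \leq \|\boldsymbol{d}^{(\ell)}\|_\infty$; the third summand is bounded by $\|\widetilde{\boldsymbol{d}}^{(\ell)}\|_\infty$ in the same way. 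The middle summand $\rho\big((\mathcal{T}_{\boldsymbol{\alpha}}\boldsymbol{c}^{(\ell-1)})_k, (\mathcal{T}_{\boldsymbol{\alpha}}\widetilde{\boldsymbol{c}}^{(\ell-1)})_k\big)$ is controlled by the stability of $\mathcal{T}_{\boldsymbol{\alpha}}$ (Definition~\ref{Stability}), which gives $\mu(\mathcal{T}_{\boldsymbol{\alpha}}\boldsymbol{c}^{(\ell-1)}, \mathcal{T}_{\boldsymbol{\alpha}}\widetilde{\boldsymbol{c}}^{(\ell-1)}) \leq S_{\mathcal{T}}\, e_{\ell-1}$. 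Taking the supremum over $k \in \mathbb{Z}$ yields the displayed recursion.

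Unrolling it from $\ell = J$ down to $\ell = 0$ gives
\[
e_J \leq S_{\mathcal{T}}^{J}\, e_0 + \sum_{i=1}^{J} S_{\mathcal{T}}^{\,J-i}\big(\|\boldsymbol{d}^{(i)}\|_\infty + \|\widetilde{\boldsymbol{d}}^{(i)}\|_\infty\big).
\]
When $S_{\mathcal{T}} \leq 1$, each exponent $J$ and $J-i$ is nonnegative so every power is $\leq 1$, producing \eqref{ManifoldStability} with $L = 1$; when $S_{\mathcal{T}} > 1$ we have $S_{\mathcal{T}}^{\,J-i} \leq S_{\mathcal{T}}^{J}$ for $0 \leq i \leq J$, so factoring $S_{\mathcal{T}}^{J}$ out of the right-hand side gives $L = S_{\mathcal{T}}^{J}$.

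The argument has no substantial obstacle; the only point requiring a word of care is the identity $\rho(p \oplus v, p) = \|v\|$ used twice above. It holds because $p \oplus v = \exp_p(v)$ lies at the end of a geodesic from $p$ of length $\|v\|$, but asserting equality (rather than just $\rho(p\oplus v,p)\le\|v\|$) needs that geodesic to be minimizing, i.e.\ $v$ within the injectivity radius of $\mathcal{M}$ — which is guaranteed by the standing admissibility hypotheses on the sequences together with the compatibility condition \eqref{CompatibilityConditions}. Everything else is a routine telescoping estimate.
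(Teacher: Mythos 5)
Your proposal is correct and follows essentially the same route as the paper: insert $(\mathcal{T}_{\boldsymbol{\alpha}}\boldsymbol{c}^{(\ell-1)})_k$ and $(\mathcal{T}_{\boldsymbol{\alpha}}\widetilde{\boldsymbol{c}}^{(\ell-1)})_k$ via the triangle inequality, identify the outer terms with $\|\boldsymbol{d}^{(\ell)}\|_\infty$ and $\|\widetilde{\boldsymbol{d}}^{(\ell)}\|_\infty$ using $\rho(p\oplus v,p)=\|v\|$, control the middle term by stability, and unroll the recursion. If anything, you are slightly more careful than the paper in keeping the factors $S_{\mathcal{T}}^{\,J-i}$ on the detail terms before bounding them by $1$ or $S_{\mathcal{T}}^{J}$ according to the two cases.
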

	\begin{proof}
		Recall that for any point $p\in\mathcal{M}$, by~\eqref{CompatibilityConditions} we have that 
		$\rho (p\oplus v, p)=\| v \|$ with the Euclidean norm and for all $v\in T_p\mathcal{M}$ within the injectivity radius of $\mathcal{M}$. In other words, the projection of vector $v$ that lies in the tangent space of base point $p$, has a length of $\|v\|_\infty$. Now, observe that
		\begin{align*}
			\mu(\boldsymbol{c}^{(J)},\widetilde{\boldsymbol{c}}^{(J)})&=\mu(\boldsymbol{c}^{(J)},\mathcal{T}_{\boldsymbol{\alpha}}\boldsymbol{c}^{(J-1)})+\mu(\mathcal{T}_{\boldsymbol{\alpha}}\boldsymbol{c}^{(J-1)},\mathcal{T}_{\boldsymbol{\alpha}}\widetilde{\boldsymbol{c}}^{(J-1)})+\mu(\mathcal{T}_{\boldsymbol{\alpha}}\widetilde{\boldsymbol{c}}^{(J-1)},\widetilde{\boldsymbol{c}}^{(J)}) \\ & \leq 
			\|\boldsymbol{d}^{(J)}\|_\infty + S_{\mathcal{T}}\mu(\boldsymbol{c}^{(J-1)},\widetilde{\boldsymbol{c}}^{(J-1)}) + \|\widetilde{\boldsymbol{d}}^{(J)}\|_\infty.
		\end{align*}
		Iterating the latter triangle inequality for the middle term gives
		\begin{align*}
			\mu(\boldsymbol{c}^{(J)},\widetilde{\boldsymbol{c}}^{(J)})\leq \|\boldsymbol{d}^{(J)}\|_\infty + \|\boldsymbol{d}^{(J-1)}\|_\infty + S_{\mathcal{T}}^2\mu(\boldsymbol{c}^{(J-2)},\widetilde{\boldsymbol{c}}^{(J-2)}) + \|\widetilde{\boldsymbol{d}}^{(J-1)}\|_\infty + \|\widetilde{\boldsymbol{d}}^{(J)}\|_\infty,
		\end{align*}
		which inductively yields to 
		\begin{align*}
			\mu(\boldsymbol{c}^{(J)},\widetilde{\boldsymbol{c}}^{(J)})\leq \sum_{i=1}^J \|\boldsymbol{d}^{(i)}\|_\infty + S_{\mathcal{T}}^J\mu(\boldsymbol{c}^{(0)},\widetilde{\boldsymbol{c}}^{(0)}) + \sum_{i=1}^J \|\widetilde{\boldsymbol{d}}^{(i)}\|_\infty.
		\end{align*}
		The required is thus obtained with $L=1$ if $S_{\mathcal{T}}\leq 1$, and $L=S^J_{\mathcal{T}}$ otherwise.
	\end{proof}
	
	Note that if the two pyramids in Theorem~\ref{ManifoldStabilityTHM1} were generated by~\eqref{ManifoldNonInterpolatingTransform} to represent samples of two differentiable curves $\Gamma$ and $\widetilde{\Gamma}$, respectively. Then, by making use of Corollary~\ref{ManifoldCorollary}, the sum term in~\eqref{ManifoldStability} can be bounded in terms of the constants $K$, $P$, $\|\nabla \Gamma\|_\infty$ and $\|\nabla \widetilde{\Gamma}\|_\infty$, which depend on the geometry of $\mathcal{M}$.
	
	A special case, where stability in the spirit of Theorem~\ref{LinearStabilityTHM} can be obtained intrinsically, is when the curvature of the manifold is bounded. Next, we present such a result, assuming $\mathcal{M}$ is complete, open manifold with non-negative sectional curvature. For that, we recall two classical theorems: the first and second Rauch comparison theorems (the second is actually due to Berger), tailored to our settings and notation. For more details, see~\cite[Chapter 3]{gromoll2009metric} and references therein.
	
	We use the following notation. Denote by $p_k \in \mathcal{M}$ two points, $k=1,2$, and $v_k \in T_{p_k}\mathcal{M}$ their vectors in the tangent spaces such that $\| v_1 \|=\| v_2 \|$ and the value is smaller than the injectivity radius of $\mathcal{M}$. Let $ G(p_1,p_2)$ be the geodesic line connecting $p_1$ and $p_2$ and $PG_{p_2}(v_1) \in T_{p_2}\mathcal{M}$ be the parallel transport of $v_1$ along $G(p_1,p_2)$ to $T_{p_2}\mathcal{M}$. Then, the first Rauch theorem suggests that 
	\begin{equation} \label{eqn:rauch1}
		\rho \big(p_2 \oplus v_2, p_2 \oplus \pg_{p_2}(v_1)\big)  \le \|v_2- \pg_{p_2}(v_1) \|.
	\end{equation}
	Moreover, the second Rauch theorem implies that 
	\begin{equation} \label{eqn:rauch2}
		\rho \big(p_1 \oplus v_1, p_2 \oplus \pg_{p_2}(v_1)\big)  \le \rho( p_1, p_2).
	\end{equation}
	We are ready for the stability conclusion.
	
	\begin{theorem}~\label{ManifoldStabilityTHM}
		Let $\mathcal{M}$ be a complete, open manifold with non-negative sectional curvature. Denote by $\left\{ \boldsymbol{c}^{(0)};\boldsymbol{d}^{(1)},\dots, \boldsymbol{d}^{(J)} \right\}$ and $\left\{ \widetilde{\boldsymbol{c}}^{(0)};\widetilde{\boldsymbol{d}}^{(1)},\dots, \widetilde{\boldsymbol{d}}^{(J)} \right\}$ two pyramids of sequences such that $ \| {d}_k^{(\ell)} \|  = \| \widetilde{d}_k^{(\ell)} \|$ and with values smaller than the injectivity radius of $\mathcal{M}$, for all $\ell=1,\ldots,J$ and $k\in \mathbb{Z}$. Also, assume that $\mu(\boldsymbol{c}^{(0)},\widetilde{\boldsymbol{c}}^{(0)})$ is sufficiently small so geodesics exist between all pairs ${c}_k^{(\ell)},\widetilde{{c}}_k^{(\ell)}$ for all $\ell=1,\ldots,J$ and $k\in \mathbb{Z}$, as reconstructed from the above two data pyramids via~\eqref{ManifoldNonInterpolatoryReconstruction}. Assume $\mathcal{T}_{\boldsymbol{\alpha}}$ is stable with constant $S_{\mathcal{T}}$, as in Definition~\ref{Stability}. 
		Then, the synthesis sequences $\boldsymbol{c}^{(J)}$ and $\widetilde{\boldsymbol{c}}^{(J)}$ satisfy 
		\begin{align} \label{eqn:stability}
			\mu( \boldsymbol{c}^{(J)},\widetilde{\boldsymbol{c}}^{(J)} )\leq L\bigg(\mu(\boldsymbol{c}^{(0)},\widetilde{\boldsymbol{c}}^{(0)}) +\sum_{i=1}^J \| \widehat{\boldsymbol{d}}^{(i)}-\widetilde{\boldsymbol{d}}^{(i)}\|_\infty\bigg),
		\end{align}
		where $\widehat{d}_k^{(i)} = \pg_{ (\mathcal{T}_{\boldsymbol{\alpha}}\widetilde{\boldsymbol{c}}^{(i-1)})_k} (d_k^{(i)})$, with $L=1$ if $S_{\mathcal{T}}\leq1$, and $L=S_{\mathcal{T}}^J$ otherwise.
	\end{theorem}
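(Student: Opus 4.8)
The plan is to run the same telescoping argument as in the proof of Theorem~\ref{ManifoldStabilityTHM1}, but to replace the two–sided triangle bound $\|\boldsymbol{d}^{(\ell)}\|_\infty+\|\widetilde{\boldsymbol{d}}^{(\ell)}\|_\infty$ on the detail contributions by the sharper quantity $\|\widehat{\boldsymbol{d}}^{(\ell)}-\widetilde{\boldsymbol{d}}^{(\ell)}\|_\infty$, which the Rauch comparison inequalities make available under the non-negative curvature hypothesis. First I would fix a level $\ell\in\{1,\dots,J\}$ and an index $k\in\mathbb{Z}$ and set $p_1=(\mathcal{T}_{\boldsymbol{\alpha}}\boldsymbol{c}^{(\ell-1)})_k$, $p_2=(\mathcal{T}_{\boldsymbol{\alpha}}\widetilde{\boldsymbol{c}}^{(\ell-1)})_k$, $v_1=d_k^{(\ell)}$, $v_2=\widetilde{d}_k^{(\ell)}$, so that by the synthesis rule~\eqref{ManifoldNonInterpolatoryReconstruction} we have $c_k^{(\ell)}=p_1\oplus v_1$ and $\widetilde{c}_k^{(\ell)}=p_2\oplus v_2$, while by definition $\widehat{d}_k^{(\ell)}=\pg_{p_2}(v_1)$. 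The hypotheses that $\|d_k^{(\ell)}\|=\|\widetilde{d}_k^{(\ell)}\|$ and that this common value is below the injectivity radius, together with the fact that parallel transport is a linear isometry, ensure that $\|v_1\|=\|v_2\|=\|\pg_{p_2}(v_1)\|$ are all admissible for both Rauch theorems.

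The key step is the estimate at a single node: inserting the intermediate point $p_2\oplus\pg_{p_2}(v_1)$ and using the triangle inequality for $\rho$ gives
\begin{align*}
    \rho\big(c_k^{(\ell)},\widetilde{c}_k^{(\ell)}\big)\le\rho\big(p_1\oplus v_1,\,p_2\oplus\pg_{p_2}(v_1)\big)+\rho\big(p_2\oplus\pg_{p_2}(v_1),\,p_2\oplus v_2\big).
\end{align*}
By the second Rauch inequality~\eqref{eqn:rauch2} the first summand is at most $\rho(p_1,p_2)$, and by the first Rauch inequality~\eqref{eqn:rauch1} the second summand is at most $\|v_2-\pg_{p_2}(v_1)\|=\|\widetilde{d}_k^{(\ell)}-\widehat{d}_k^{(\ell)}\|$. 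Taking $\sup_{k\in\mathbb{Z}}$ and then invoking stability of $\mathcal{T}_{\boldsymbol{\alpha}}$ (Definition~\ref{Stability}) yields the one–level recursion
\begin{align*}
    \mu\big(\boldsymbol{c}^{(\ell)},\widetilde{\boldsymbol{c}}^{(\ell)}\big)\le\mu\big(\mathcal{T}_{\boldsymbol{\alpha}}\boldsymbol{c}^{(\ell-1)},\mathcal{T}_{\boldsymbol{\alpha}}\widetilde{\boldsymbol{c}}^{(\ell-1)}\big)+\|\widehat{\boldsymbol{d}}^{(\ell)}-\widetilde{\boldsymbol{d}}^{(\ell)}\|_\infty\le S_{\mathcal{T}}\,\mu\big(\boldsymbol{c}^{(\ell-1)},\widetilde{\boldsymbol{c}}^{(\ell-1)}\big)+\|\widehat{\boldsymbol{d}}^{(\ell)}-\widetilde{\boldsymbol{d}}^{(\ell)}\|_\infty.
\end{align*}

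Iterating this recursion downward from $\ell=J$ to $\ell=0$, exactly as in Theorem~\ref{ManifoldStabilityTHM1}, produces
\begin{align*}
    \mu\big(\boldsymbol{c}^{(J)},\widetilde{\boldsymbol{c}}^{(J)}\big)\le S_{\mathcal{T}}^J\,\mu\big(\boldsymbol{c}^{(0)},\widetilde{\boldsymbol{c}}^{(0)}\big)+\sum_{i=1}^J S_{\mathcal{T}}^{\,J-i}\,\|\widehat{\boldsymbol{d}}^{(i)}-\widetilde{\boldsymbol{d}}^{(i)}\|_\infty,
\end{align*}
and since $S_{\mathcal{T}}^{\,J-i}\le L$ and $S_{\mathcal{T}}^{J}\le L$ for every $i\ge 1$ — with $L=1$ when $S_{\mathcal{T}}\le 1$ and $L=S_{\mathcal{T}}^J$ otherwise — factoring out $L$ gives precisely~\eqref{eqn:stability}.

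I expect the main obstacle to be not the inequality chain but the verification that every object appearing in it is well defined: the parallel transport $\pg_{p_2}(\cdot)$, and hence $\widehat{d}_k^{(i)}$, requires a unique minimizing geodesic $G\big((\mathcal{T}_{\boldsymbol{\alpha}}\boldsymbol{c}^{(i-1)})_k,(\mathcal{T}_{\boldsymbol{\alpha}}\widetilde{\boldsymbol{c}}^{(i-1)})_k\big)$, and both Rauch comparison theorems require the relevant vectors and the distances $\rho(p_1,p_2)$ to stay below the injectivity radius. These facts have to be drawn out of the standing smallness hypothesis on $\mu(\boldsymbol{c}^{(0)},\widetilde{\boldsymbol{c}}^{(0)})$ by an induction on $\ell$: the one–level recursion above already bounds $\mu(\boldsymbol{c}^{(\ell)},\widetilde{\boldsymbol{c}}^{(\ell)})$ by quantities that are small by assumption, and because $(\mathcal{T}_{\boldsymbol{\alpha}}\boldsymbol{c}^{(\ell-1)})_k$ and $c_k^{(\ell)}$ (likewise their tilded counterparts) differ only by the sub–injectivity–radius tangent vector $d_k^{(\ell)}$, the base points at every level remain in the region where $\pg$ and~\eqref{eqn:rauch1} and~\eqref{eqn:rauch2} apply. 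I would also flag explicitly that the equal–norm hypothesis $\|d_k^{(\ell)}\|=\|\widetilde{d}_k^{(\ell)}\|$ is used at exactly one point, namely in the application of the first Rauch inequality~\eqref{eqn:rauch1}.
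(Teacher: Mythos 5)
Your proposal is correct and follows essentially the same route as the paper: the same intermediate point $\mathcal{T}_{\boldsymbol{\alpha}}\widetilde{\boldsymbol{c}}^{(\ell-1)}\oplus\widehat{\boldsymbol{d}}^{(\ell)}$, the same use of~\eqref{eqn:rauch2} and~\eqref{eqn:rauch1} to obtain the one-level recursion $\mu(\boldsymbol{c}^{(\ell)},\widetilde{\boldsymbol{c}}^{(\ell)})\leq S_{\mathcal{T}}\mu(\boldsymbol{c}^{(\ell-1)},\widetilde{\boldsymbol{c}}^{(\ell-1)})+\|\widehat{\boldsymbol{d}}^{(\ell)}-\widetilde{\boldsymbol{d}}^{(\ell)}\|_\infty$, and the same iteration absorbed into $L$. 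Your added remarks on the well-definedness of the parallel transports and the role of the equal-norm hypothesis are consistent with the assumptions the theorem already imposes, so no gap remains.
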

	\begin{proof}
		Observe that
		\begin{align*}
			\mu(\boldsymbol{c}^{(J)},\widetilde{\boldsymbol{c}}^{(J)})
			& \leq
			\mu\big( \boldsymbol{c}^{(J)}, \mathcal{T}_{\boldsymbol{\alpha}} \widetilde{\boldsymbol{c}}^{(J-1)}\oplus  \widehat{\boldsymbol{d}}^{(J)}\big) + \mu\big(\mathcal{T}_{\boldsymbol{\alpha}} \widetilde{\boldsymbol{c}}^{(J-1)}\oplus  \widehat{\boldsymbol{d}}^{(J)},\widetilde{\boldsymbol{c}}^{(J)}\big)
			\\ & =
			\mu\big( \mathcal{T}_{\boldsymbol{\alpha}} {\boldsymbol{c}}^{(J-1)}\oplus \boldsymbol{d}^{(J)},
			\mathcal{T}_{\boldsymbol{\alpha}} \widetilde{\boldsymbol{c}}^{(J-1)}\oplus \widehat{\boldsymbol{d}}^{(J)} \big) +
			\mu\big(\mathcal{T}_{\boldsymbol{\alpha}} \widetilde{\boldsymbol{c}}^{(J-1)}\oplus \widehat{\boldsymbol{d}}^{(J)}, \mathcal{T}_{\boldsymbol{\alpha}} \widetilde{\boldsymbol{c}}^{(J-1)}\oplus  \widetilde{\boldsymbol{d}}^{(J)} \big) 
			\\ & \leq		
			\mu( \mathcal{T}_{\boldsymbol{\alpha}} {\boldsymbol{c}}^{(J-1)}, 	\mathcal{T}_{\boldsymbol{\alpha}} \widetilde{\boldsymbol{c}}^{(J-1)} ) + \| \widehat{\boldsymbol{d}}^{(J)}-\widetilde{\boldsymbol{d}}^{(J)}\|_\infty
			\\ & \leq		
			S_{\mathcal{T}} \mu( {\boldsymbol{c}}^{(J-1)},  \widetilde{\boldsymbol{c}}^{(J-1)} ) + \| \widehat{\boldsymbol{d}}^{(J)}-\widetilde{\boldsymbol{d}}^{(J)}\|_\infty.
		\end{align*}
		For the first inequality we use the triangle inequality, for the second we use~\eqref{eqn:rauch2} and~\eqref{eqn:rauch1}. Lastly, we apply the stability of the subdivision scheme. Iterating the latter yields the required bound.
	\end{proof}
	We present two brief comments on Theorem~\ref{ManifoldStabilityTHM}. First, bounding the sectional curvature from below with a positive number clearly does not change the conclusion. Still, if the lower bound is negative, such as in hyperbolic manifolds, estimations~\eqref{eqn:rauch1}-\eqref{eqn:rauch2} do not hold, and more delicate argument is needed. Second, we allow the details to differ only by their mutual angle and not magnitude. We may remove this obstacle using a more technical calculation which we omit here for compactness.
	
	\begin{remark}
		Following the methodology of~\cite{grohs2012definability}, together with our estimation~\eqref{P_proposition}, an analogue of~\eqref{eqn:stability} can be achieved based on proximity to the linear counterparts of our operators. This result is more of asymptotic flavor and it carries less information about the constant $L$. Nevertheless, it holds for more general class of manifolds.
	\end{remark}
	The stability results support the concept of using the inverse transform~\eqref{ManifoldNonInterpolatoryReconstruction} for different numerical tasks, as we will see in the next section.
	
	
	\section{Numerical Examples}\label{NumericalExamples}
	
	In this section, we focus on demonstrating our pyramid transform numerically. We begin with an illustration of the bounds from Theorem~\ref{LinearTruncatedDecayTHM} and Theorem~\ref{LinearNormalizedDecayTHM}, emphasizing the importance of mask normalization. Then, we show the application of our multiscale transforms over manifold data to the tasks of denoising and anomaly detection. All MATLAB scripts that include the examples of this section are available online at \href{https://github.com/WaelMattar/Manifold-Multiscale-Representations}{https://github.com/WaelMattar/Manifold-Multiscale-Representations} for reproducibility.
	
	\subsection{Comparing the novel linear decimation operators}\label{LinearNumericalExamples}
	
	In Section~\ref{ApproximatedDecimationSection}, we present new methods for truncating the sequence $\boldsymbol{\gamma}$ to obtain a finite mask. Comparing Theorem~\ref{LinearTruncatedDecayTHM} and Theorem~\ref{LinearNormalizedDecayTHM}, and in particular, their upper bounds on the norms of the generated detail coefficients, shows a significant additional factor in~\eqref{TruncatedGammaBound}. This section examines the numerical nature of this difference and how accurate the description of the detail coefficients' decay according to the theoretical bound is.
	
	Our example is conducted in the functional setting, where we consider the samples of the smooth periodic function $f(x)=\sin(3x)$. We choose $\mathcal{S}_{\boldsymbol{\alpha}}$ to be the linear cubic subdivision scheme, as appears in Example~\ref{CubicExample} and sample $f$ over the interval $[0,2\pi]$ at $10\times 2^{10}$ equispaced points, that is, to obtain $\boldsymbol{c}^{(J)}$ with $J=10$. The samples are treated as a periodic sequence, so it represents a bi-infinite sequence. Then, we decompose the samples via the linear multiscale transforms~\eqref{LinearTruncatedMultiscaleTransform} and~\eqref{LinearNormalizedMultiscaleTransform} which depend on the truncated mask~\eqref{Truncation} and shift invariant mask~\eqref{Normalization}, respectively. 
	
	The maximum norms of the generated details are depicted in Figure~\ref{truncation_error} as a function of the level $\ell=1,\dots,10$, for two different truncation parameters $\varepsilon=10^{-2}$ and $\varepsilon=10^{-5}$. The results show behavior that agrees with the upper bounds of Theorem~\ref{LinearTruncatedDecayTHM} and Theorem~\ref{LinearNormalizedDecayTHM}. In particular, the details, as generated by~\eqref{LinearTruncatedMultiscaleTransform}, are bounded by a value of order $\varepsilon$, due to the additional term in~\eqref{TruncatedGammaBound} which does not decay with respect to $\ell$. On the other hand, the detail coefficients generated by~\eqref{LinearNormalizedMultiscaleTransform} decay geometrically, as expected, see Corollary~\eqref{LinearCorollary}.
	
	\begin{figure}[H]
		\centering
		\begin{subfigure}[b]{0.49\textwidth}
			\includegraphics[width=1\textwidth]{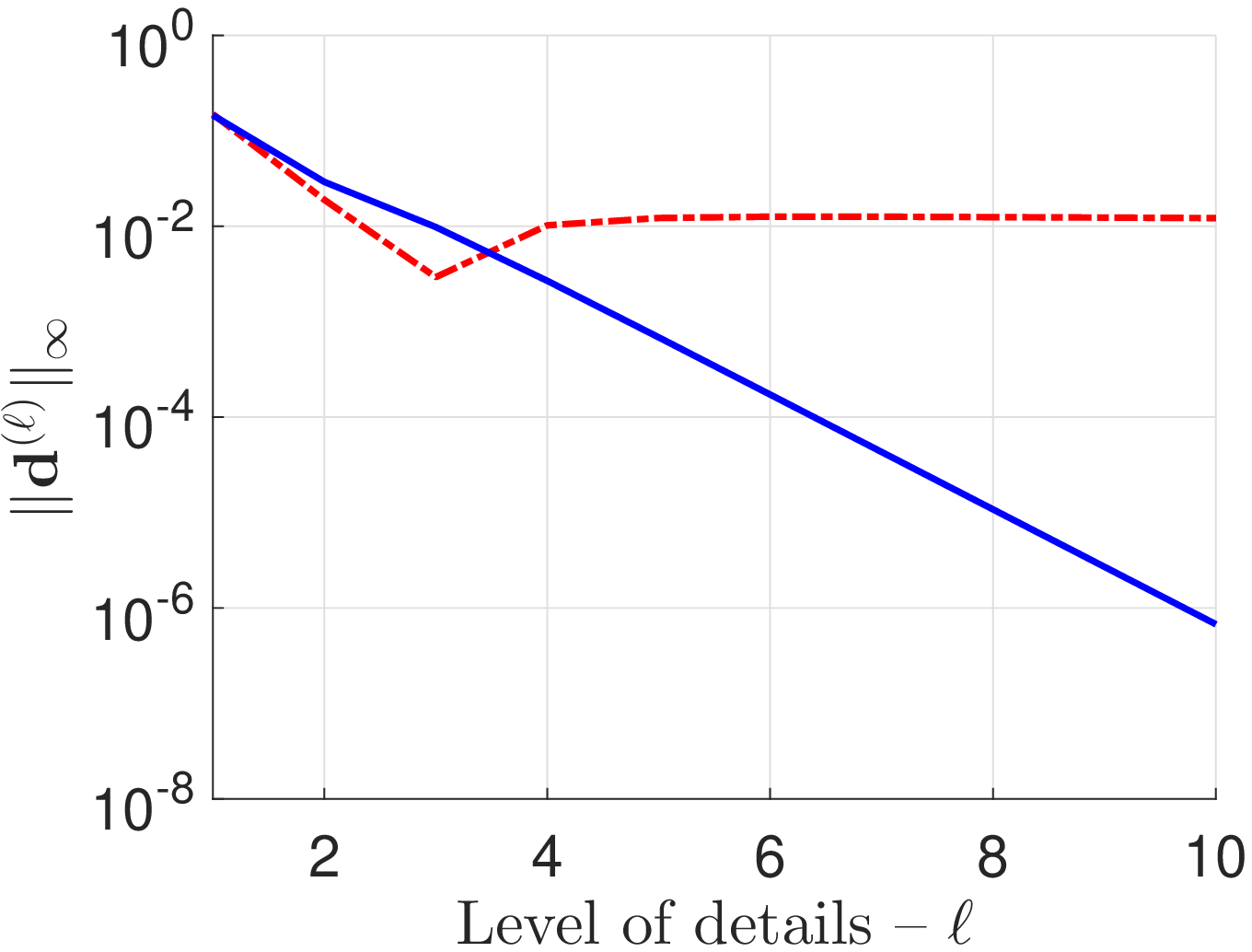}
		\end{subfigure}%
		\begin{subfigure}[b]{0.49\textwidth}
			\includegraphics[width=1\textwidth]{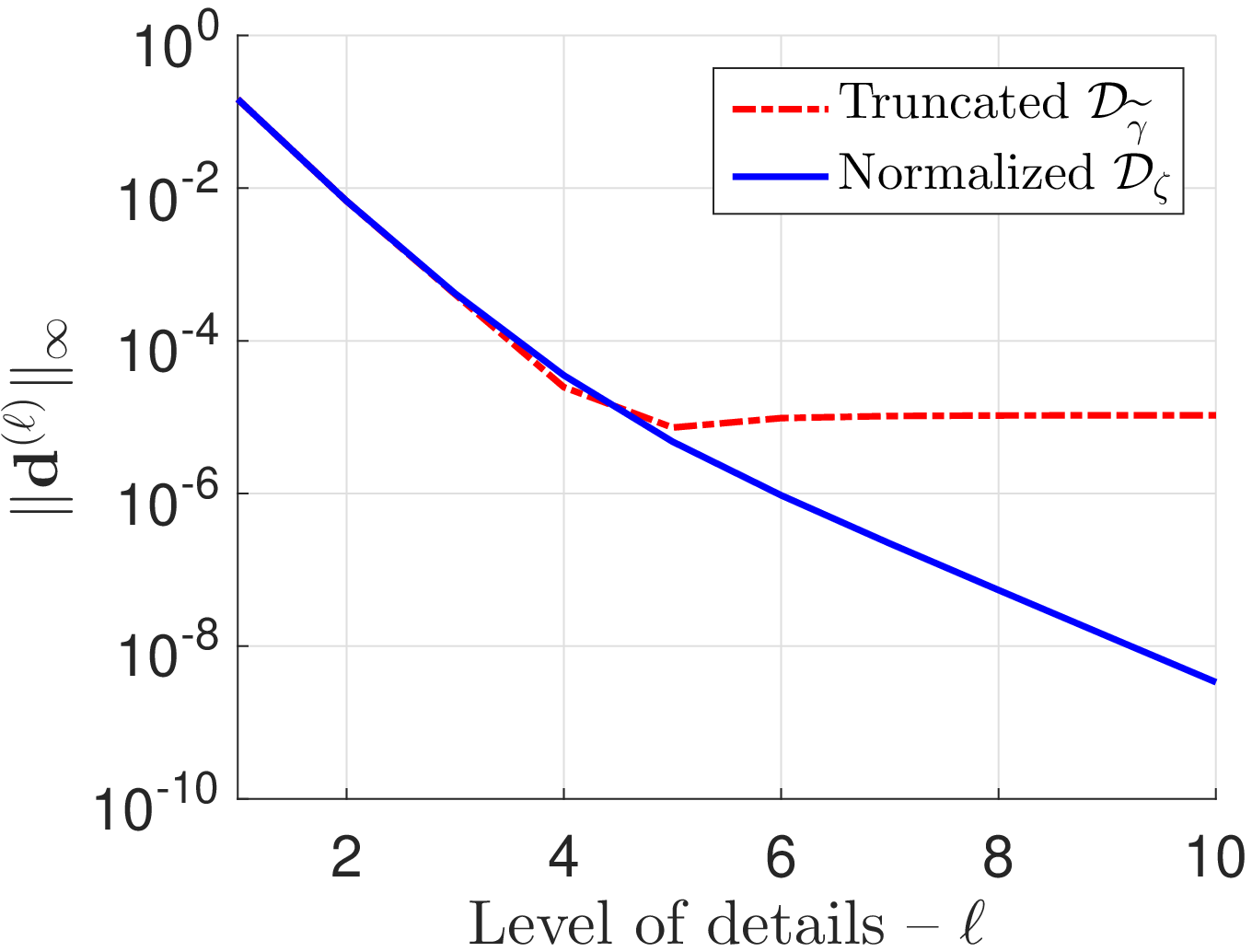}
		\end{subfigure}
		
		\caption{Truncating the decimation operator: the detail coefficients norm as a function of the level $\ell$, plotted on a logarithmic scale for different truncation parameters. On the left, $\varepsilon=10^{-2}$. On the right, $\varepsilon=10^{-5}$. In both figures, the red dashed lines correspond to $\mathcal{D}_{\widetilde{\boldsymbol{\gamma}}}$ of the truncated mask~\eqref{Truncation}, and the blue lines correspond to $\mathcal{D}_{\boldsymbol{\zeta}}$ of the truncated, shift invariant mask~\eqref{Normalization}. Note how the blue graphs are bounded by geometrically decaying bounds, as guaranteed by Corollary~\ref{LinearCorollary}. In contrast, the red dashed lines are bounded below by values of the same order as the truncation parameter $\varepsilon$, as implied by Corollary~\ref{LinearCorollary}.}
		\label{truncation_error}
	\end{figure}
	
	\subsection{Denoising of sphere-valued curve}\label{Denoising_Section}
	
	We turn to manifold-valued data and consider the unit sphere $\mathbb{S}^2$ in $\mathbb{R}^3$ as the manifold of this section. The following example serves as a proof of concept for the application of pyramid transform for curves over manifolds. Specifically, we address the problem of estimating a curve from its noisy samples. To this purpose, we follow the conventional algorithm of reconstructing the object from its thresholded multiscale coefficients. For the data model, denote by $\Gamma_k$, $k\in\mathbb{Z}$ the equidistant samples of a curve $\Gamma$ over the sphere, and by
	\begin{align}~\label{noise}
		\Upsilon_k= \Gamma_k \oplus \chi_k,
	\end{align}
	the noisy samples, where $\chi_k\sim \mathcal{N}(\boldsymbol{\mu},\boldsymbol{\Sigma})$ are i.i.d. normally distributed random variables with zero mean and covariance matrix $\boldsymbol{\Sigma}=\sigma^2 I$. The noise terms $\chi_k$ are in the respective tangent spaces $T_{\Gamma_k}\mathcal{M}$, which are isomorphic to $\mathbb{R}^2$. Note that small noise levels guarantee $\chi_k$ to be within the injectivity radius of the exponential map $\oplus$ associated to point $\Gamma_k$. We, therefore, assume that the realizations of the noise terms are sufficiently small.
	
	In the current test case, we take $\Gamma$ to be a flower-like periodic smooth $\mathbb{S}^2$-valued curve defined via spherical coordinates as,
	\begin{align}~\label{sphere_flower_curve}
		\Gamma(\theta) = \big(\sin(\varphi(\theta))\cos(\theta),\; \sin(\varphi(\theta))\sin(\theta),\; \cos(\varphi(\theta))\big),\quad \varphi(\theta)=\displaystyle\frac{\pi}{16}\cos(N\theta)+\frac{\pi}{6}, \quad \theta\in[0,2\pi].
	\end{align}
	Here, $N\in\mathbb{N}$ determines the number of the flower's leaves. We set $N=5$ as shown in Figure~\ref{sphere_samples}.
	
	Let $\mathcal{T}_{\boldsymbol{\alpha}}$ be the Riemannian analogue of the cubic spline subdivision scheme adapted to $\mathbb{S}^2$ as described in Section~\ref{RiemannianAnalogue}. Denote by $\mathcal{Y}_{\boldsymbol{\zeta}}$ its approximated decimation operator with the shift invariant mask $\boldsymbol{\zeta}$, as given in~\eqref{ManifoldNonInterpolatingTransform}. In this example, we pick $\varepsilon=10^{-5}$ which induces that $\boldsymbol{\zeta}$ consists of $13$ nonzero elements. We note that $\mathbb{S}^2$ is a 2-dimensional topological manifold with positive sectional curvature, thus, optimization problems like~\eqref{ManifoldFrechetMean} and~\eqref{ManifoldFrechetMeanDecimation} may have infinite solutions, e.g., when averaging two antipodal points. However, for close enough points on $\mathbb{S}^2$, the center of mass exists uniquely, see~\cite{dyer2016barycentric, SvenjaWallnerConvergenceSphere}. We follow a Riemannian gradient descent method~\cite{krakowski2007computation} to calculate the Riemannian center of mass on $\mathbb{S}^2$. Figure~\ref{sphere_decomposition} demonstrates $\Gamma$ of~\eqref{sphere_flower_curve} alongside its corresponding pyramidical representation via our multiscale transform~\eqref{ManifoldNonInterpolatingTransform}, which manifests the detail coefficients decay.
	
	\begin{figure}
		\centering
		\begin{subfigure}[b]{0.43\textwidth}
			\includegraphics[width=0.9\textwidth]{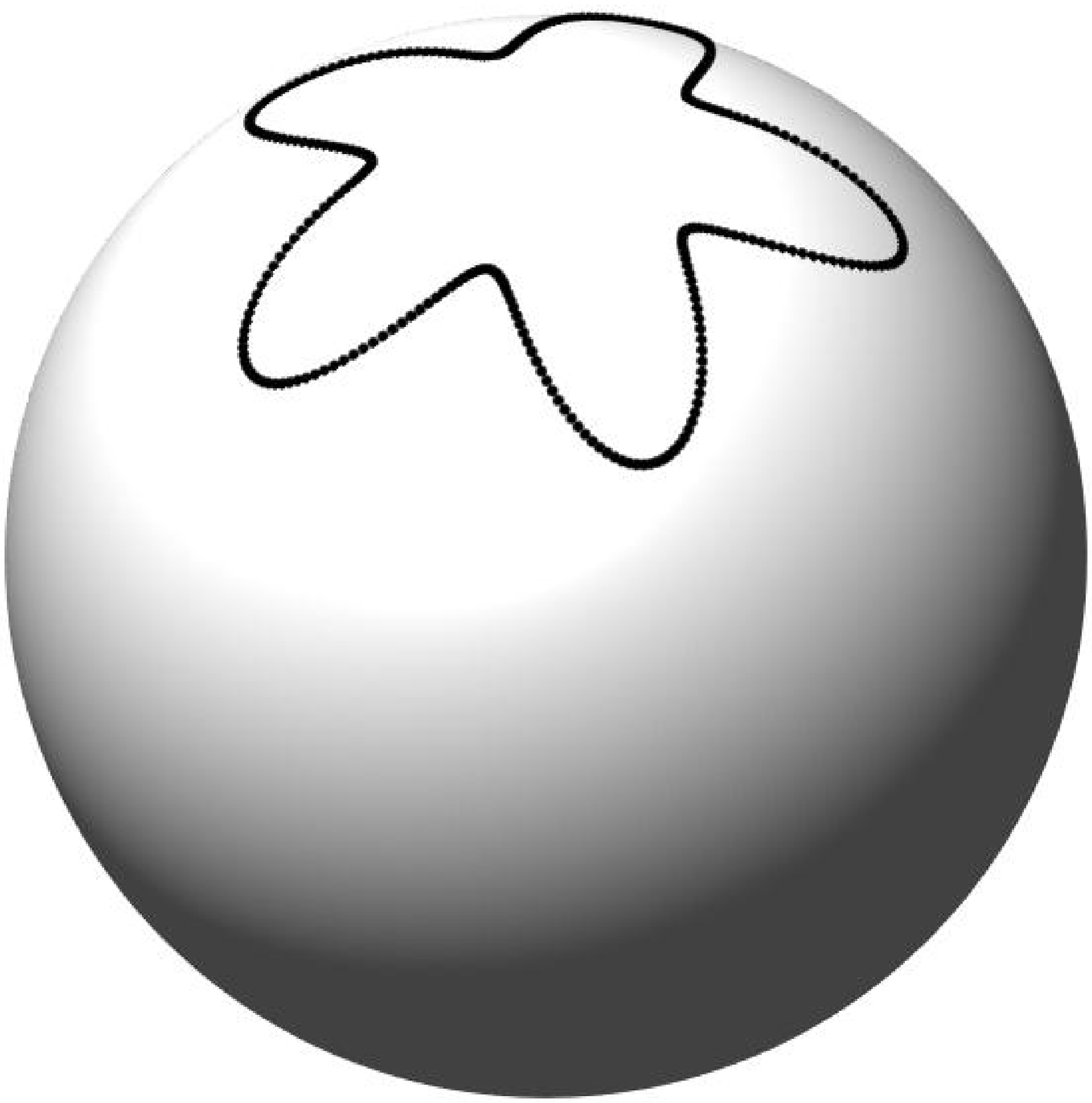}
			\caption{}
			\label{sphere_samples}
		\end{subfigure}
		\hfill
		\begin{subfigure}[b]{0.53\textwidth}	
			\centering
			\includegraphics[width=1\textwidth]{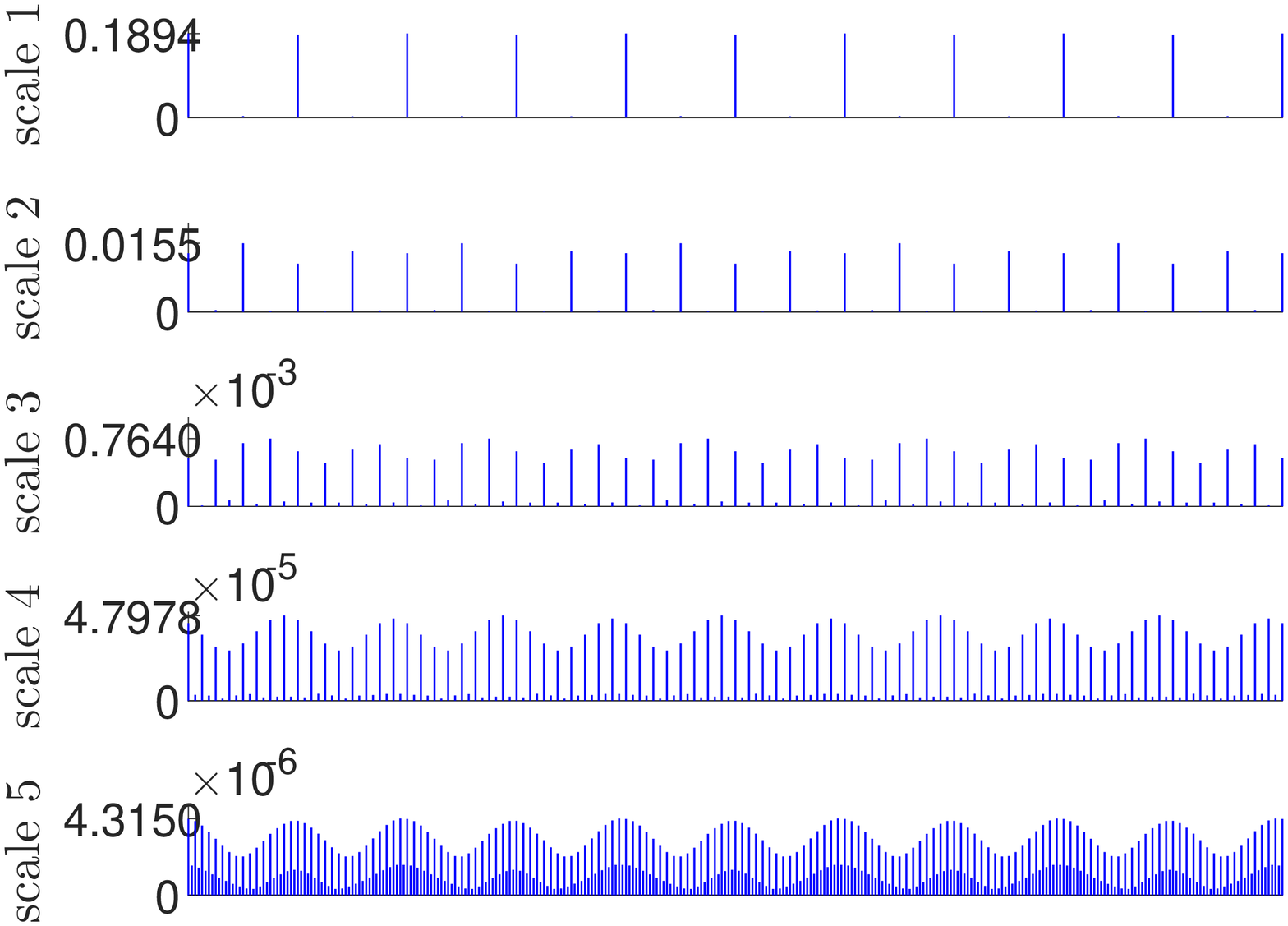}
			\caption{}
			\label{sphere_multiscale}
		\end{subfigure}
		\caption{The curve $\Gamma$ of~\eqref{sphere_flower_curve} over the unit sphere with its multiscale transform~\eqref{ManifoldNonInterpolatingTransform}. On the left, $10\times 2^5$ equispaced samples of the flower-like curve. On the right, the Euclidean norms of the detail coefficients $d^{(\ell)}_k$ for $\ell=1,\dots,5$, generated by applying~\eqref{ManifoldNonInterpolatingTransform}. As the scale increases, the maximal norm of each layer decays geometrically as guaranteed by Corollary~\ref{ManifoldCorollary}. Note that every second element of each layer is smaller. This phenomenon is explained by condition~\eqref{NonInterpolatingCondition} and reflects the approximation character of our decimation operator. Therefore, smaller values of the truncation parameter $\varepsilon$ yield to smaller details coefficients.}
		\label{sphere_decomposition}
	\end{figure}
	
	We now synthetically generate noisy samples according to the model~\eqref{noise}, with $\sigma \approx 1/80$. Figure~\ref{sphere_noise} shows the noisy data alongside their corresponding pyramidical representation via our multiscale transform~\eqref{ManifoldNonInterpolatingTransform}. As we can see, the multiscale representation of the noisy sequence $\Upsilon$ does not enjoy the property of detail coefficients decay.
	
	To estimate $\Gamma$ from its noisy samples $\Upsilon_k$, we follow~\cite{donoho1995noising}, where it is shown that thresholding of the details of the pyramid transform yields a nearly-optimal estimation. In other words, we go over each layer of multiscale coefficients corresponding to the noisy curve, see Figure~\ref{sphere_noisy_details}, and set to zero all detail coefficients with norm below a fixed threshold, $0.14$ in our case. This process yields to a sparser pyramid representation which forms an estimation of the ground truth $\Gamma$. The approximant is synthesized iteratively by~\eqref{ManifoldNonInterpolatoryReconstruction}. 
	
	Figure~\ref{sphere_denoised} demonstrates the denoised curve alongside its multiscale representation. Indeed, the detail coefficients of the denoised curve are bounded by a geometrically decreasing sequence, which indicates the smoothness of the resulted curve.
	
	To sum, our multiscale transform~\eqref{ManifoldNonInterpolatingTransform} makes a useful tool for denoising curves over manifolds. The denoising's performance in this example is reflected by the resemblance between the ground truth and the denoised curves. 
	
	\begin{figure}
		\centering
		\begin{subfigure}[b]{0.43\textwidth}
			\centering
			\includegraphics[width=0.9\textwidth]{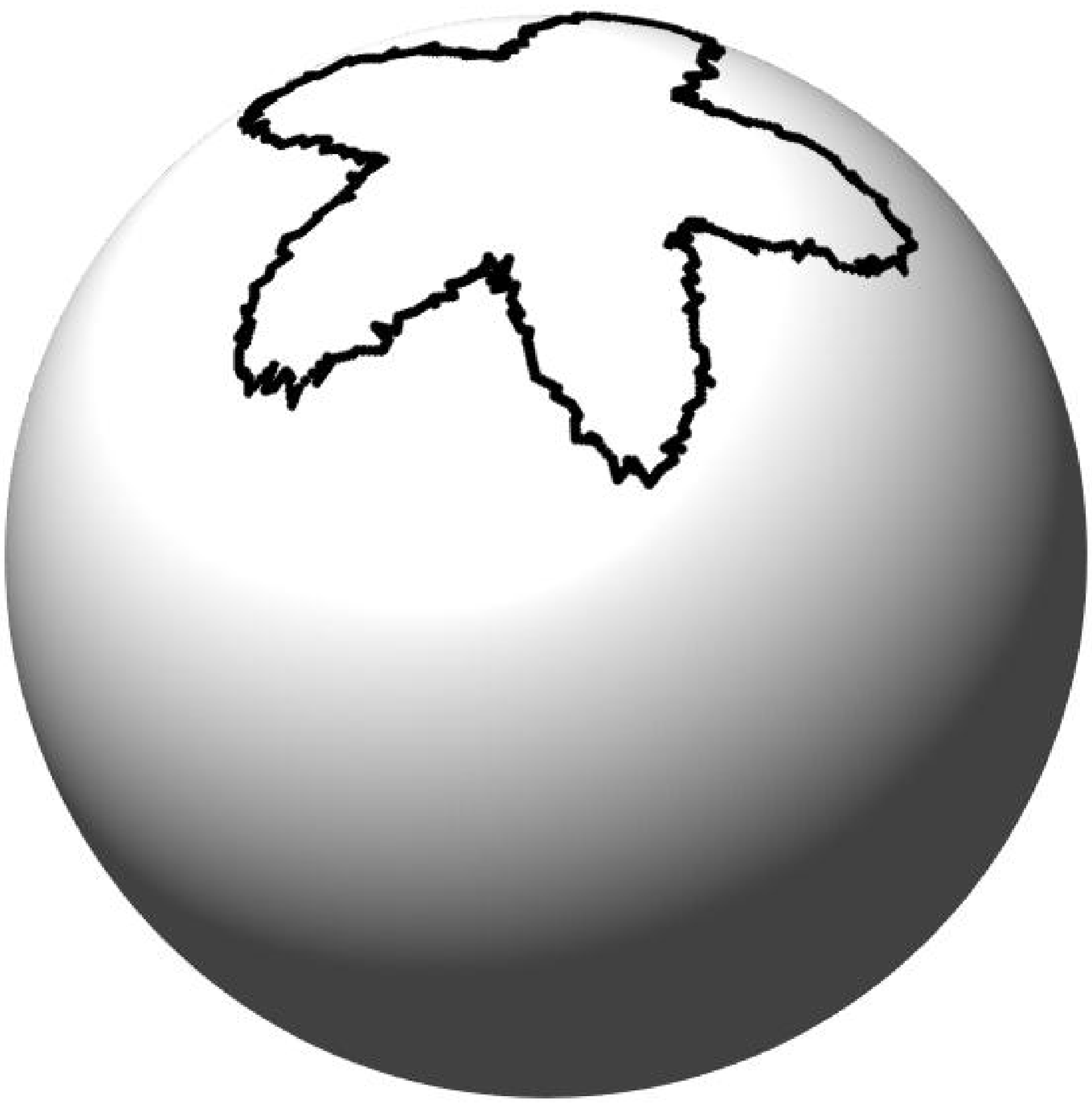}
			\caption{}
			\label{sphere_noisy_points}
		\end{subfigure}
		\hfill
		\begin{subfigure}[b]{0.53\textwidth}
			\centering
			\includegraphics[width=1\textwidth]{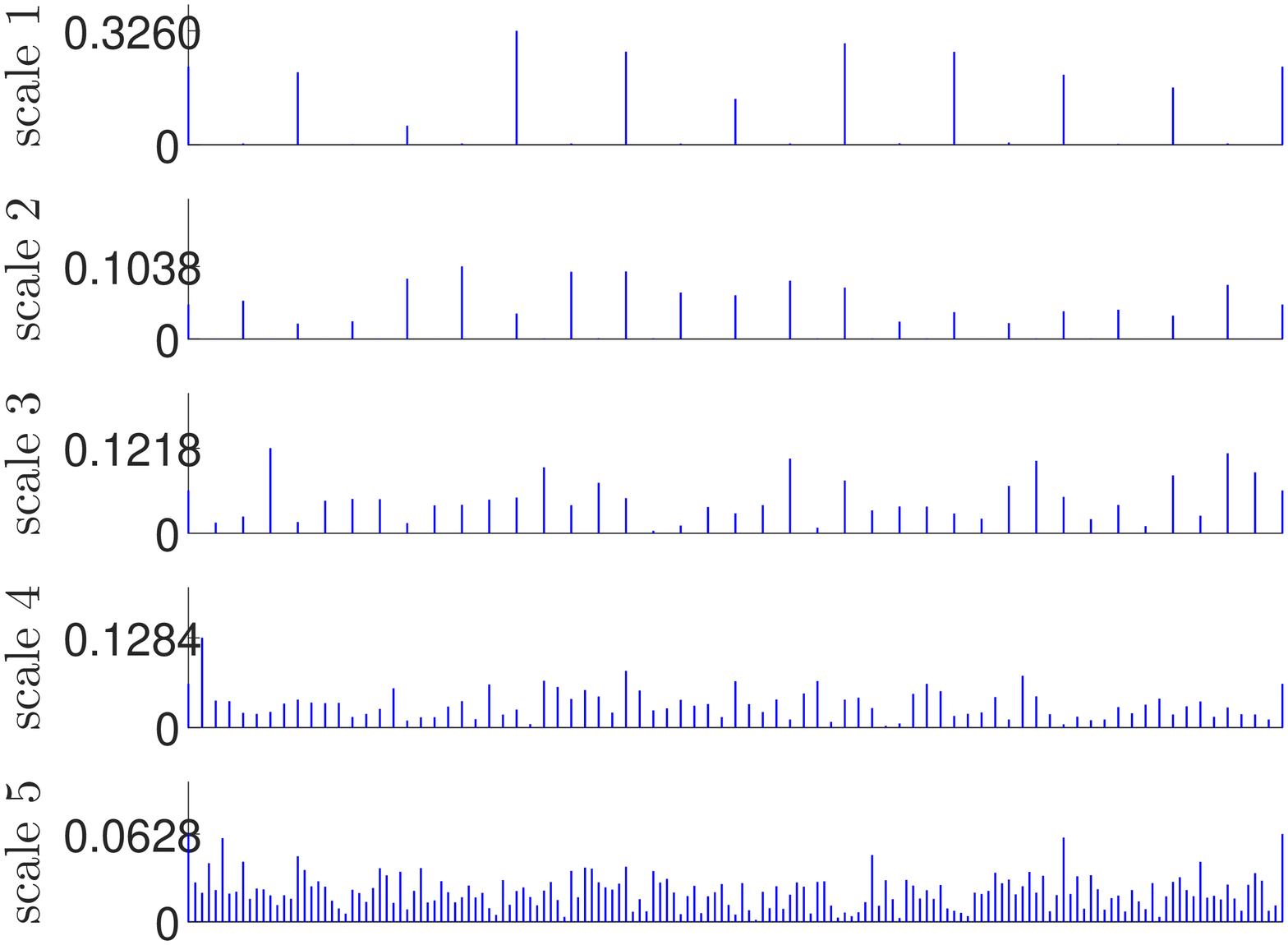}
			\caption{}
			\label{sphere_noisy_details}
		\end{subfigure}
		\caption{Noisy samples and its multiscale representation. On the left, the noisy points $\Upsilon$ of~\eqref{noise}. On the right, the Euclidean norms of the detail coefficients.}
		\label{sphere_noise}
	\end{figure}
	
	\begin{figure}
		\centering
		\begin{subfigure}[b]{0.43\textwidth}
			\centering
			\includegraphics[width=0.9\textwidth]{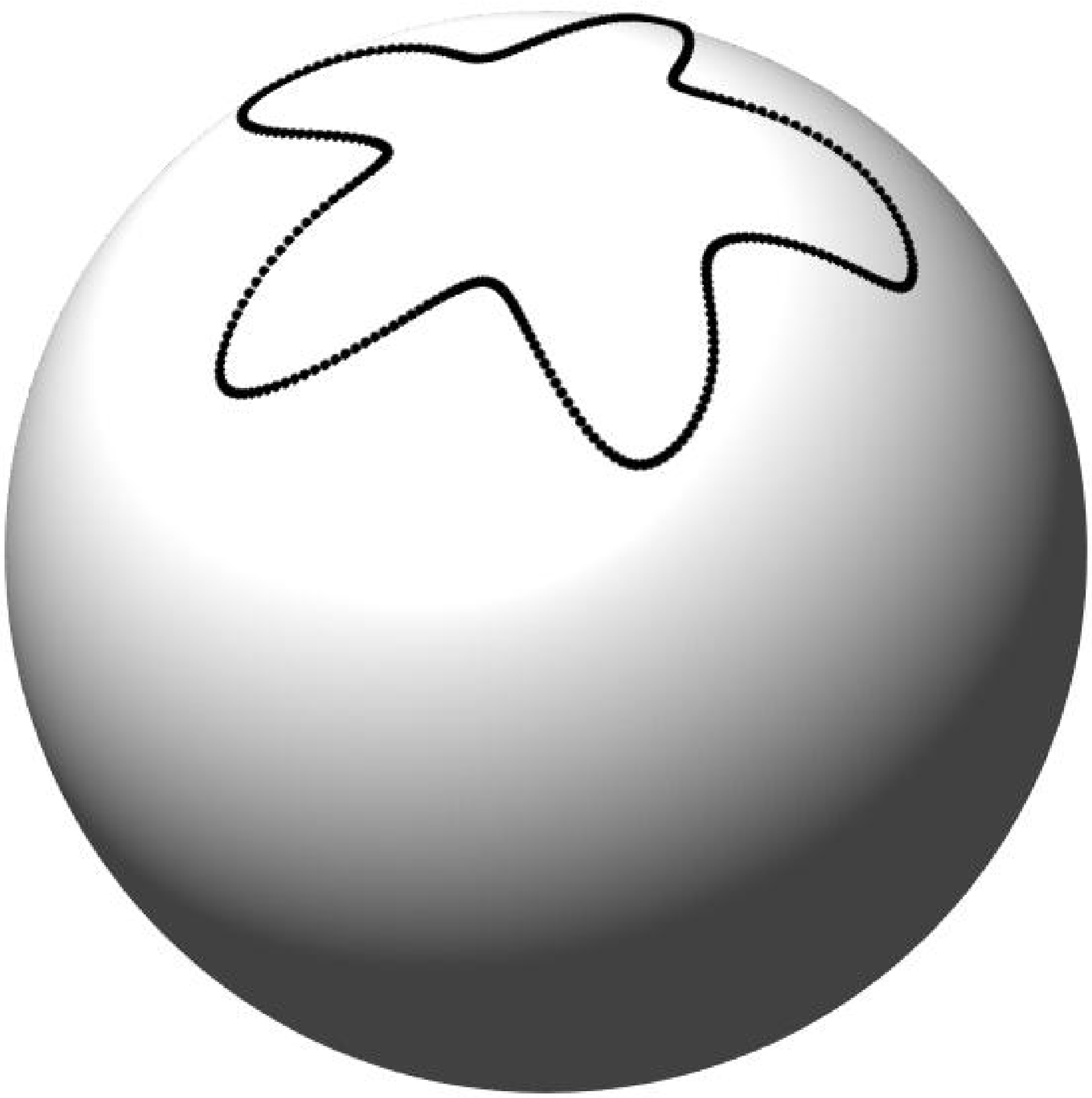}
			\caption{}
			\label{sphere_denoised_points}
		\end{subfigure}
		\hfill
		\begin{subfigure}[b]{0.53\textwidth}
			\centering
			\includegraphics[width=1\textwidth]{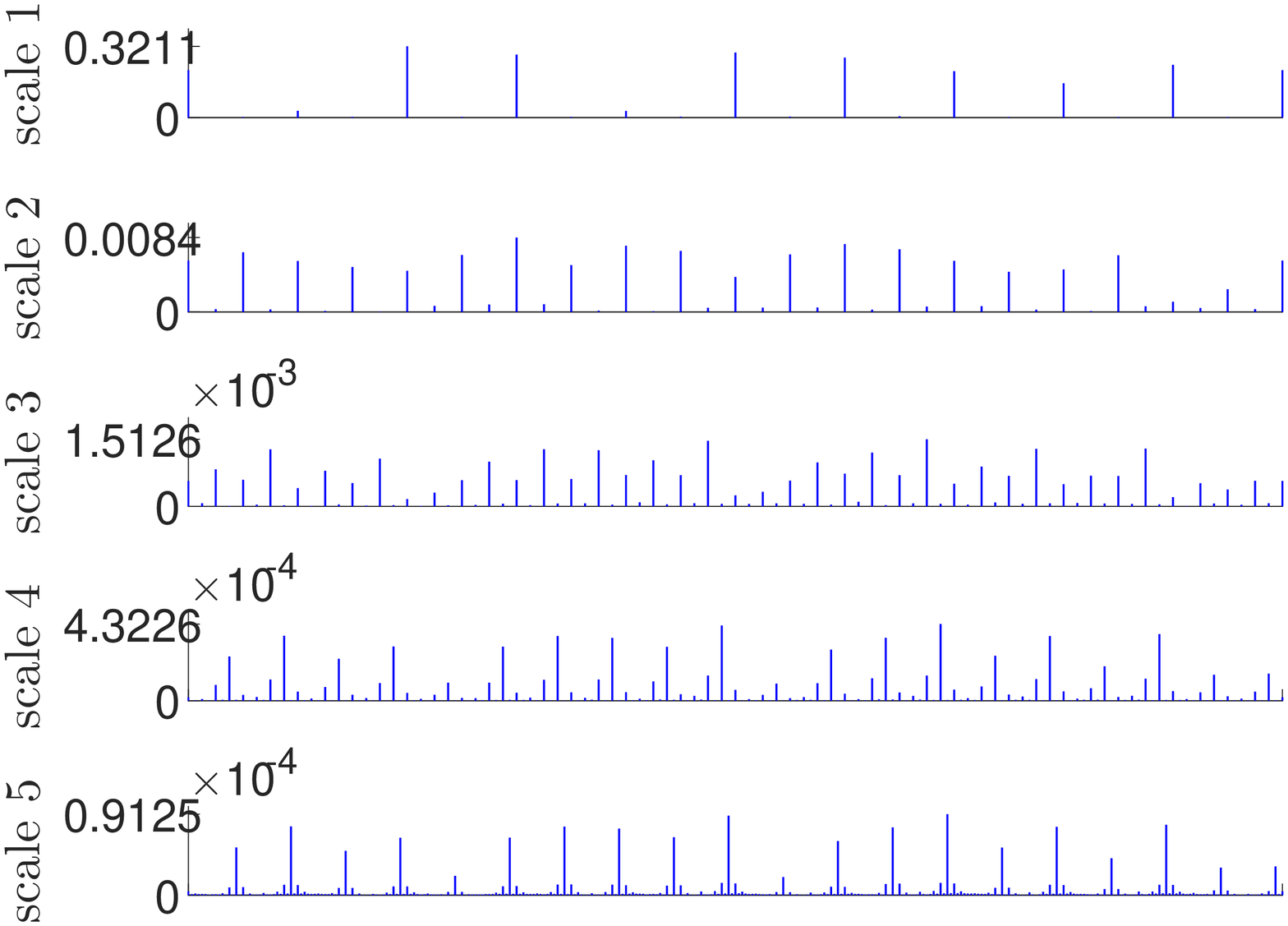}
			\caption{}
			\label{sphere_denoised_details}
		\end{subfigure}
		\caption{Denoised curve and its multiscale representation. On the left, estimation of $\Gamma$~\eqref{sphere_flower_curve}. On the right, the Euclidean norms of the detail coefficients. The decay of detail coefficients indicate the smoothness of the denoised curve.}
		\label{sphere_denoised}
	\end{figure}
	
	\subsection{Anomaly detection of \texorpdfstring{$\mathcal{SPD}(3)$}{SPD(3)}-valued curve}\label{Anomaly_Detection_Section}
	
	Our multiscale transform~\eqref{ManifoldNonInterpolatingTransform} involves the application of two local operators. This feature makes the transform a beneficial tool for detecting and analyzing local behavior in manifold-valued curves. This section focuses on representing curves over the cone of $3 \times 3$ symmetric positive matrices, which we denote by $\mathcal{SPD}(3)$. In particular, we show the application of our pyramid analysis to the problem of anomaly detection. Namely, we aim to automatically detect rapid local changes in a time series of matrices by inspecting its multiscale representation. 
	
	We consider a smooth periodic $\mathcal{SPD}(3)$-valued curve given explicitly via trigonometric deformations. Then, we apply a scaling factor to the eigenvalues of all the matrices that fall in the middle third of the curve to provide anomaly. This application gives rise to a piecewise smooth $\mathcal{SPD}(3)$-valued curve with two jump discontinuities. We depict the two curves, both the smooth original one and the distributed piecewise smooth, in Figure~\ref{spd_curve}. Each curve is represented by a series of centered ellipsoids, where every ellipsoid has its main axes determined by the eigenvectors of the corresponding  matrix and their lengths by the associated eigenvalues.
	
	We set the test by taking $\mathcal{T}_{\boldsymbol{\alpha}}$ to be the corner-cutting (quadratic B-spline) subdivision scheme, as presented in Example~\ref{QuadraticExample}, adapted to $\mathcal{SPD}(3)$ as described in Section~\ref{RiemannianAnalogue}. Denote by $\mathcal{Y}_{\boldsymbol{\zeta}}$ its approximated decimation operator with the truncation parameter $\varepsilon=10^{-4}$, implying a shift invariant mask $\boldsymbol{\zeta}$ with $9$ nonzeros. The Riemannian center of mass over $\mathcal{SPD}(3)$ is globally unique due to the manifold's nonpositive sectional curvature. To calculate it, we follow the gradient descent method in~\cite{iannazzo2019derivative}.
	
	Next, we decompose both curves of Figure~\ref{spd_curve} by the multiscale transform~\eqref{ManifoldNonInterpolatingTransform} and investigate the norms of the detail coefficients. The norms of the detail coefficients, which lie in the linear space of all symmetric matrices of order $3$, are presented in Figure~\ref{spd_details_norms}. As it turns out, the detail coefficients corresponding to the smooth curve are represented by a geometrically decreasing sequence, as guaranteed by Corollary~\ref{ManifoldCorollary}. However, in the vicinities of the anomaly points, the detail coefficients generated by our multiscale transform~\eqref{ManifoldNonInterpolatingTransform} have relatively large norms. Namely, the large detail coefficients are correlated with the parametric locations around the jump discontinuities. Therefore, the multiscale transform~\eqref{ManifoldNonInterpolatingTransform} makes a useful tool for detecting such anomalies. 
	
	\begin{figure}
		\begin{center}
			\begin{subfigure}[b]{\linewidth}
				\centering
				\includegraphics[width=\textwidth]{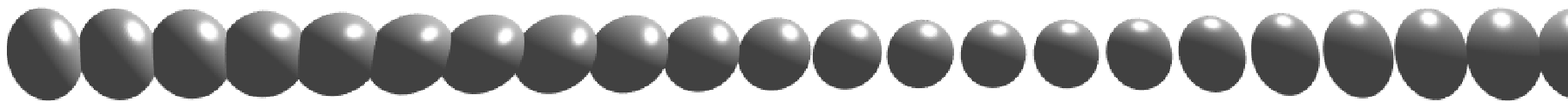}
				\caption{}
				\label{spd_samples}
			\end{subfigure}
			\bigskip
			\begin{subfigure}[b]{\linewidth}
				\centering
				\includegraphics[width=\textwidth]{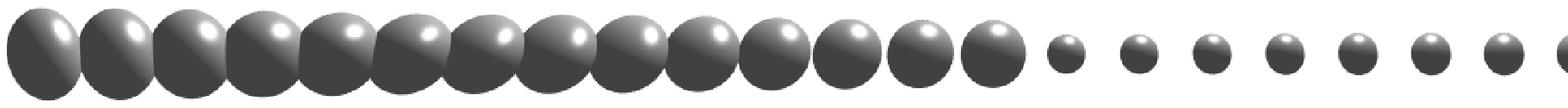}
				\caption{}
				\label{spd_heaviside_samples}
			\end{subfigure}
			\caption{$\mathcal{SPD}(3)$-valued curves. On the top, $41$ ellipsoids that represent smooth $\mathcal{SPD}(3)$-valued curve. On the bottom, the modified $41$ ellipsoids now represent the piecewise-smooth curve, with two jump discontinuities in the middle.}
			\label{spd_curve}
		\end{center}
	\end{figure}
	
	\begin{figure}
		\begin{center}
			\begin{subfigure}[b]{0.48\linewidth}
				\centering
				\includegraphics[width=\linewidth]{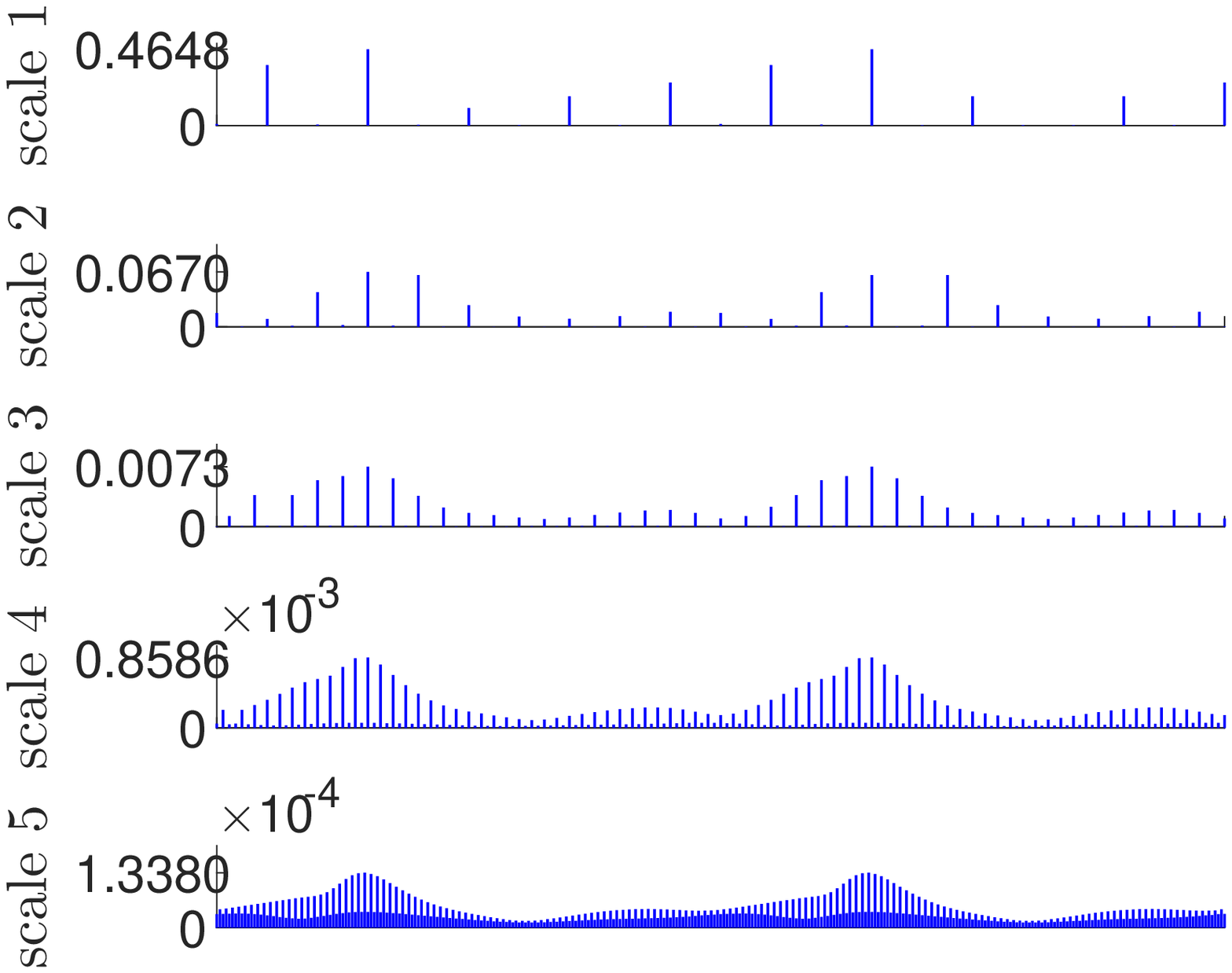}
				\caption{}
				\label{spd_multiscale_details}
			\end{subfigure}
			\begin{subfigure}[b]{0.48\linewidth}
				\centering
				\includegraphics[width=\linewidth]{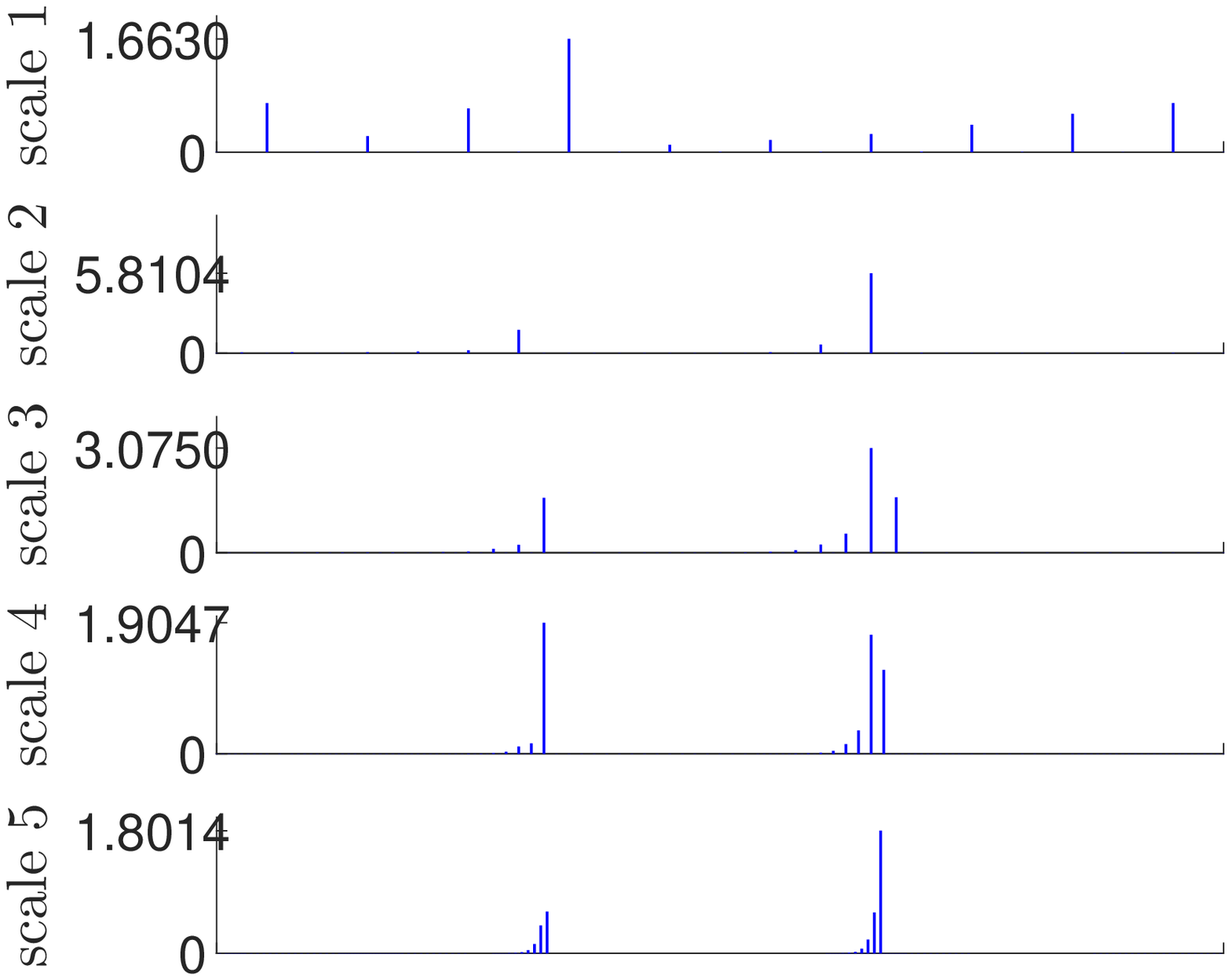}
				\caption{}
				\label{spd_heaviside_multiscale_details}
			\end{subfigure}
			\caption{Frobenius norms of the detail coefficients of the two $\mathcal{SPD}(3)$ curves in Figure~\ref{spd_curve}. On the left, detail coefficients norms corresponding to Figure~\ref{spd_samples}. On the right, detail coefficients norms corresponding to Figure~\ref{spd_heaviside_samples}. The decay rate, which decreases with each layer in~(\subref{spd_multiscale_details}), implies the curve's smoothness. Moreover, note how the theoretical condition~\eqref{NonInterpolatingCondition} is illustrated in~(\subref{spd_multiscale_details}) as every second detail is proportional to the truncation parameter $\varepsilon$. On the other hand, the two local peaks in~(\subref{spd_heaviside_multiscale_details}) indicate radical changes in the respective curve and reveal the abnormalities.}
			\label{spd_details_norms}
		\end{center}
	\end{figure}
	
	\begin{remark}~\label{Remark_P}
		We numerically estimated the constant $P$ of~\eqref{P_proposition} corresponding to this section's manifold settings. The results appear in Table~\ref{Sphere_Table} and Table~\ref{SPD_Table} at Appendix~\ref{app:constant_P} where we present the minimal possible $P$. This value decreases monotonically to $1$ as the scale of sampling, $J$, increases. This phenomenon implies that the decimation operation $\mathcal{Y}_{\boldsymbol{\zeta}}$ behaves like the simple downsampling operation for close enough $\mathcal{M}$-valued data points.
	\end{remark}
	
	\section*{Acknowledgement}
	
	The authors thank Nira Dyn and David Levin for their helpful comments and insightful discussions. N.S. was partially supported by BSF-NSF grant no. 2019752 and BSF grant no. 2018230.
	
	\bibliographystyle{plain}
	\bibliography{References}
	
	\appendix
	
	\section{Numerical evaluation of the decaying factor}
	\label{app:constant_P} 
	
	Lemma~\ref{ManifoldDecayLEM} introduces a decaying rate of the norms of the details. Here, we provide several numerical evaluations of the decaying factor $P$ of~\eqref{P_proposition}, as observed in the examples of Sections~\ref{Denoising_Section} and~\ref{Anomaly_Detection_Section}. Indeed, there exists a constant $P>1$ such that~\eqref{P_proposition} holds for sequences $\boldsymbol{c}$ sampled equidistantly from a differentiable curve over arc-length parametrization. In particular, as seen through the proof of Lemma~\ref{ManifoldDecayLEM}, the minimal possible value of $P$ can be evaluated by
	$$P_{\text{min}}=\frac{\Delta_{\mathcal{M}}(\mathcal{Y}_{\boldsymbol{\zeta}}\boldsymbol{c})}{2\Delta_{\mathcal{M}}(\boldsymbol{c})},$$
	where $\mathcal{Y}_{\boldsymbol{\zeta}}$ is the decimation operator used in the multiscale transform~\ref{ManifoldNonInterpolatingTransform}. Under the settings of Sections~\ref{Denoising_Section} and~\ref{Anomaly_Detection_Section}, we calculate $P_{\text{min}}$ for different $\Delta_{\mathcal{M}}(\boldsymbol{c})$ values. The results are shown in Table~\ref{Sphere_Table} and Table~\ref{SPD_Table}.
	
	The main feature of Table~\ref{Sphere_Table} and Table~\ref{SPD_Table} is that, in both manifold settings, the constant $P_{\text{min}}$ decreases monotonically to $1$ as $\Delta_{\mathcal{M}}(\boldsymbol{c})$ decreases. This fact indicates a similar behavior between $\mathcal{Y}_{\boldsymbol{\zeta}}$ and the downsampling operation $\downarrow 2$, when the distance between data points reduces, as stated in Remark~\ref{Remark_P}.
	
	\begin{table}[h!]
		\centering
		\begin{tabular}{| c || c | c | c | c | c | c | c | c ||}
			\hline
			$\Delta_{\mathcal{M}}(\boldsymbol{c})$ & 0.2667 & 0.1639 & 0.0859 & 0.0433 & 0.0217 & 0.0108 & 0.0054 & 0.0027 \\ [0.5ex] 
			\hline\hline
			$\mathbb{S}^2$ & 1.4021 & 1.0368 & 1.0205 & 1.0086 & 1.0038 & 1.0003 & 1.0001 & 1.0000 \\ [0.5ex] 
			\hline
		\end{tabular}
		\caption{The constant $P_{\text{min}}$ against different values of $\Delta_{\mathcal{M}}(\boldsymbol{c})$. The second row demonstrates $P_{\text{min}}$ corresponding to samples of the curve $\Gamma$ in~\eqref{sphere_flower_curve} over the sphere $\mathbb{S}^2$, with the respective $\Delta_{\mathcal{M}}(\boldsymbol{c})$ value. The operator $\mathcal{Y}_{\boldsymbol{\zeta}}$ is the adapted even-inverse of the cubic B-spline with truncation parameter $\varepsilon=10^{-5}$, as in Section~\ref{Denoising_Section}.}
		\label{Sphere_Table}
	\end{table}
	
	\begin{table}[h!]
		\centering
		\begin{tabular}{| c || c | c | c | c | c | c | c | c ||}
			\hline
			$\Delta_{\mathcal{M}}(\boldsymbol{c})$ & 0.6837 & 0.3542 & 0.1813 & 0.0912 & 0.0457 & 0.0228 & 0.0114 & 0.0057 \\ [0.5ex] 
			\hline\hline
			$\mathcal{SPD}(3)$ & 1.2661 & 1.0613 & 1.0176 & 1.0053 & 1.0014 & 1.0004 & 1.0000 & 1.0000 \\ [0.5ex] 
			\hline
		\end{tabular}
		\caption{The constant $P_{\text{min}}$ against different values of $\Delta_{\mathcal{M}}(\boldsymbol{c})$. The second row demonstrates $P_{\text{min}}$ corresponding to samples of the curve shown in Figure~\ref{spd_samples} over the manifold $\mathcal{SPD}(3)$, with the respective $\Delta_{\mathcal{M}}(\boldsymbol{c})$ value. The operator $\mathcal{Y}_{\boldsymbol{\zeta}}$ is the adapted even-inverse of the quadratic B-spline with truncation parameter $\varepsilon=10^{-4}$, as in Section~\ref{Anomaly_Detection_Section}.}
		\label{SPD_Table}
	\end{table}
	
\end{document}